\numberwithin{equation}{section}
\newtheorem{lemma}[equation]{Lemma}
\newtheorem{thm}[equation]{Theorem}
\newtheorem{conjecture}[equation]{Conjecture}
\newtheorem{cor}[equation]{Corollary}
\newtheorem{prop}[equation]{Proposition}
\newtheorem{question}[equation]{Question}
\theoremstyle{remark}
\newtheorem{remark}[equation]{Remark}
\newtheorem{remarks}[equation]{Remarks}
\newtheorem*{acknowledgments}{Acknowledgments}
\renewcommand{\bar}[1]{#1\llap{$\overline{\phantom{\rm#1}}$}}
\newcommand{\Aberk}[1]{\ensuremath{\bA_{\mathrm{Berk},#1}^1}}
\newcommand{\lra}{\longrightarrow}
\DeclareMathOperator{\hhat}{{\widehat{h}}}
\DeclareMathOperator{\supp}{{Supp}}
\DeclareMathOperator{\gal}{{Gal}}
\newcommand{\bA}{{\mathbb A}}
\newcommand{\N}{{\mathbb N}}
\newcommand{\Q}{{\mathbb Q}}
\newcommand{\R}{{\mathbb R}}
\newcommand{\C}{{\mathbb C}}
\newcommand{\E}{{\mathbb E}}
\newcommand{\M}{{\mathbb M}}
\newcommand{\Kbar}{{\bar{K}}}
\newcommand{\Qbar}{\bar{\mathbb{Q}}}
\newcommand{\bC}{{\mathbb C}}
\newcommand{\Gal}{{\rm Gal}}
\newcommand{\cV}{\mathcal{V}}
\newcommand{\cS}{\mathcal{S}}
\newcommand{\into}{\hookrightarrow}
\newcommand{\cD}{\mathcal{D}}
\renewcommand{\l}{\lambda}
\newcommand{\g}{\gamma}
\renewcommand{\d}{\delta}
\newcommand{\f}{\varphi}
\newcommand{\bfa}{{\mathbf a}}
\newcommand{\bfb}{{\mathbf b}}
\newcommand{\bfc}{{\mathbf c}}
\newcommand{\bff}{{\mathbf f}}
\newcommand{\bfg}{{\mathbf g}}
\newcommand{\Prep}{{\rm Prep}}
\newcommand{\Prepd}{{\rm PrepDiff}}
\newcommand{\bfM}{{\mathbf M}}
\newcommand{\cL}{\mathcal{L}}
\newcommand{\cX}{\mathcal{X}}
\newcommand{\cY}{\mathcal{Y}}
\newcommand{\bP}{{\mathbb P}}
\begin{document}



\title{Preperiodic points for families of polynomials}

\author{D.~Ghioca}
\address{
Dragos Ghioca\\
Department of Mathematics\\
University of British Columbia\\
Vancouver, BC V6T 1Z2\\
Canada
}
\email{dghioca@math.ubc.ca}

\author{L.-C.~Hsia}
\address{
Liang-Chung Hsia\\
Department of Mathematics\\ 
National Central University\\
Chung-Li, Taiwan, ROC
}
\email{hsia@math.ncu.edu.tw}

\author{T.~J.~Tucker}
\address{
Thomas Tucker\\
Department of Mathematics\\
University of Rochester\\
Rochester, NY 14627\\
USA
}
\email{ttucker@math.rochester.edu}

\keywords {Preperiodic points; Heights}
\subjclass[2010]{Primary 37P05; Secondary 37P10}
\thanks{The first author was partially supported by an NSERC Discovery Grant. The second
  author was partially supported by the National Center of Theoretical Sciences of Taiwan and 
 NSC Grant  99-2115-M-008-007-MY3.  The third   author was partially  
supported by NSF Grants 0801072 and 0854839.}


 \begin{abstract}
 Let $a(\l),b(\l)\in\C[\l]$ and let $f_{\lambda}(x)\in \C[x]$ be a 
 one-parameter family of polynomials indexed by all $\lambda\in
 \C$. We study whether there exist
 infinitely many $\lambda\in \C$ such that both $a(\l)$ and $b(\l)$ are
 preperiodic for $f_{\lambda}$.  
 \end{abstract}

\date{\today}

\maketitle

\section{Introduction}
\label{intro}

The classical Manin-Mumford conjecture for abelian varieties (now a
theorem due to Raynaud \cite{Ray1, Ray2}) predicts that the set of
torsion points of an abelian variety $A$ defined over $\C$ is not
Zariski dense in a subvariety $V$ of $A$, unless $V$ is a translate of
an algebraic subgroup of $A$ by a torsion point.  Pink and Zilber
extended the Manin-Mumford conjecture to a more general question
regarding unlikely intersections between a subvariety $V$ of a
semiabelian variety $A$ and families of algebraic subgroups of $A$ of
codimension greater than the dimension of $V$ (see \cite{BMZ, Habegger, M-Z-1, Pink}). Here we state a special case of the
conjecture when $V$ is a curve.

\begin{conjecture}[Pink-Zilber]
\label{Pink-Zilber conjecture}
Let $\cS$ be a semiabelian scheme over a variety $\cY$ defined over $\C$,
and let $V\subset \cS$ be a curve which is not contained in any proper algebraic subgroup of $\cS$. We define
$$\cS^{[2]}:=\bigcup_{y\in\cY} B_y,$$
where $B_y$ is the union of all algebraic subgroups of the fibre $\cS_y$ of codimension at least equal to $2$. 
Then the intersection of $V$ with $\cS^{[2]}$ is finite.
\end{conjecture}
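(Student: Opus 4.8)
The statement is the relative (scheme-theoretic) form of the Pink--Zilber conjecture for curves, a well-known open problem; it is a theorem when $\cS$ is constant (Maurin and others) or for fibre powers of an elliptic scheme (Masser--Zannier), and the plan below is the strategy one would attempt, modelled on the ``two inequalities'' method of Bombieri--Masser--Zannier together with Habegger's work. First I would reduce to the case where $\cY$, $\cS$ and $V$ are all defined over $\Qbar$: since $V$ is a curve, specializing the defining data to a sufficiently general closed point of a $\Qbar$-model of the parameters keeps $V$ out of every proper algebraic subgroup of $\cS$, so the $\Qbar$-version of the conclusion yields the one over $\C$. From now on everything is over $\Qbar$, equipped with a fibrewise canonical height $\hhat$ on $\cS$ and an ample height on the base $\cY$.

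The heart of the argument is a \emph{height upper bound}: there should be a constant $C$ so that every $P\in V$ lying in an algebraic subgroup of its fibre $\cS_y$ of codimension at least $2$ satisfies $\hhat(P)\le C$. Such a subgroup imposes two independent group-theoretic relations on $P$; because $V$ is not contained in any proper subgroup, these relations cannot be absorbed by $V$, and a Siegel-lemma / geometry-of-numbers estimate in the fibre --- essentially Habegger's Bounded Height Theorem --- forces the fibrewise height of $P$ to be bounded. Making this uniform over the family requires standard control on the variation of the canonical height in a family (Tate's telescoping argument, Silverman-type specialization estimates) together with a bound on the height of the base point $y$; alternatively one views $\cS\to\cY$ as a single semiabelian variety over the function field of $\cY$ and argues there.

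The second step converts bounded height into finiteness. The degrees of the points $P$ need not be bounded, so Northcott's theorem does not apply directly. Instead one shows that the subgroups $H$ actually meeting $V$ fall into finitely many families, that each $V\cap H$ is finite (being $0$-dimensional), and that any $P\in V$ of sufficiently small canonical height lying in this unlikely locus must be one of finitely many special points --- that is, one invokes a \emph{relative Bogomolov-type lower bound}, uniform across the fibres. The upper bound, this lower bound, and the fibrewise finiteness together give the claim. A possible alternative, when the ambient space is arithmetically rigid enough, is the Pila--Zannier strategy: count algebraic points of $V$ in a transcendental uniformization via the Pila--Wilkie counting theorem and balance it against a lower bound for the sizes of Galois orbits.

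The main obstacle is exactly the relative height lower bound in the last step. A uniform Bogomolov--Lehmer inequality over a non-isotrivial family is a genuinely hard problem --- in effect the relative Bogomolov conjecture --- and its absence is what keeps the general statement out of reach; this is why the known unconditional cases are those where an explicit arithmetic equidistribution statement on the base can stand in for it, and why the present paper, instead of attacking the conjecture directly, treats a concrete dynamical instance (preperiodic points in one-parameter families of polynomials) in which the needed lower bound can be produced by hand.
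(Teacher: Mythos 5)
You have correctly recognized that this is not a theorem the paper proves: it is the Pink--Zilber conjecture, stated here purely as background and motivation, and it remains open in the generality given. The paper offers no proof of it (nor claims to), so there is nothing to compare your strategy against; what the paper actually does is attack a dynamical analogue (Question~\ref{main conjecture}) via generalized Mandelbrot sets, Berkovich potential theory, and the Baker--Rumely equidistribution theorem, exactly as you note in your closing paragraph. Your survey of the known cases (Bombieri--Masser--Zannier, Habegger, Maurin, Masser--Zannier), the two-step ``bounded height then relative Bogomolov/Pila--Zannier'' template, and the identification of the uniform height lower bound over a non-isotrivial family as the principal obstruction are all accurate and sensible context. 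One small caution: the opening reduction from $\C$ to $\Qbar$ by specializing the transcendental parameters is more delicate than a one-line appeal to ``sufficiently general specialization'' --- preserving the hypothesis that $V$ lies in no proper algebraic subgroup of $\cS$, and controlling the unlikely locus uniformly, requires a genuine argument (typically a Lefschetz-type or model-theoretic transfer together with a finiteness statement for the relevant subgroup schemes); this is a nontrivial point in the literature and should not be glossed over if you were to pursue the sketch seriously.
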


In \cite{M-Z-1, M-Z-2}, Masser and Zannier study
Conjecture~\ref{Pink-Zilber conjecture} when $\cS$ is the square of the Legendre family of
elliptic curves $E_{\lambda}$ (over the base $\bA^1\setminus\{0,1\}$) given by the equation
$y^2=x(x-1)(x-\lambda)$. They show that there exist at most finitely
many $\lambda\in\C$ such that both
$P_{\l}:=\left(2,\sqrt{2(2-\lambda)}\right)$ and
$Q_{\l}:=\left(3,\sqrt{6(3-\lambda)}\right)$ are torsion points for
$E_{\lambda}$. Their result is a special case of Conjecture~\ref{Pink-Zilber conjecture} since one can show that the curve $\{(P_{\lambda},Q_{\lambda})\text{ : }\l\ne 0,1\}$ is not contained in a proper algebraic subgroup of $\cS$.

The result of Masser and Zannier has a distinctive dynamical
flavor. Indeed, one may consider the following more general
problem. Let $\{X_{\l}\}$ be an algebraic family of quasiprojective
varieties defined over $\C$, let $\Phi_{\l}:X_{\l}\lra X_{\l}$ be an
algebraic family of endomorphisms, and let $P_{\l}\in X_{\l}$ and
$Q_{\l}\in X_{\l}$ be two algebraic families of points. Under what
conditions there exist infinitely many $\l$ such that both $P_{\l}$
and $Q_{\l}$ are preperiodic for $\Phi_{\l}$? Indeed, the problem from
\cite{M-Z-1, M-Z-2} fits into this general dynamical framework by
letting $X_{\l}=E_{\l}$ be the Legendre family of elliptic curves, and
letting $\Phi_{\l}$ be the multiplication-by-$2$-map on each elliptic
curve in this family.

In \cite{Matt-Laura}, Baker and DeMarco study an interesting special case of the above general dynamical question. Given the complex numbers $a$ and $b$, and given the integer $d\ge 2$,
when there exist infinitely many $\lambda\in\C$ such that both $a$ and
$b$ are preperiodic for the action of $f_{\lambda}(x):=x^d+\lambda$ on
$\C$?  
They show that  this happens if and only if $a^d=b^d$. 

The family of polynomials $\{x^d+\lambda\}_{\lambda\in\C}$ from
\cite{Matt-Laura} is a family of polynomials in \emph{normal form} of
degree $d\ge 2$. We say that a polynomial $f(x)$ of degree $d$ is in normal
form, if it is monic and its coefficient of $x^{d-1}$ equals $0$. We
note that each polynomial of degree $d$ can be conjugated by a linear
polynomial $\delta$ such that $\delta^{-1}\circ f\circ \delta$ is a
polynomial in normal form. Indeed, if the leading coefficient of
$f(x)$ equals $c_d$, while its coefficient of $x^{d-1}$ equals
$c_{d-1}$, we may let $\delta(x)=c_d^{-1/(d-1)}x-c_{d-1}/d$; then
$\delta^{-1}\circ f\circ \delta$ is in normal form.  It is natural to
ask the question from \cite{Matt-Laura} for any family of polynomials
in normal form acting on the affine line whose coefficients are
parameterized by any set of one-variable polynomials.

One might hope to formulate a general dynamical version of
Conjecture~\ref{Pink-Zilber conjecture} for polarizable endomorphisms of projective varieties 
more general than multiplication-by-$m$ maps on abelian varieties (an 
endomorphism $\Phi$ of a projective variety $X$ is polarizable, if
there exists  $d\ge 2$ and a line bundle $\cL$ on $X$ such
that $\Phi^*(\cL)$ is linearly equivalent to $\cL^{\otimes d}$ in
${\rm Pic}(X)$) by using the analogy between abelian subschemes
and preperiodic subvarieties.  Unfortunately, there are already
counterexamples, even in the case of a constant base, as is
illustrated in \cite{IMRN}.  On the other hand, in the special case $X=\bP^1\times \bP^1$, these counterexamples
are well-understood, and we believe it is reasonable to ask if the following dynamical
Pink-Zilber conjecture may hold for families of maps $(f_{\l},f_{\l})$ acting on
$\bP^1\times\bP^1$.

\begin{question}
\label{main conjecture}
Let $Y$ be any quasiprojective curve defined over $\C$, and let $F$ be
the function field of $Y$.  Let $\bfa,\bfb\in\bP^1(F)$, and let
$V\subset \cX:=\bP^1_F\times_F\bP^1_F$ be the curve $(\bfa,\bfb)$.
Let $\bff:\bP^1\lra \bP^1$ be a rational map of degree $d \ge 2$
defined over $F$. Then for all but finitely many $\l\in Y$, $\bff$ induces
a well-defined rational map $f_{\l}:\bP^1\lra \bP^1$ defined over
$\C$.  If there exist infinitely many $\l\in Y$ such that both
$\bfa(\l)$ and $\bfb(\l)$ are preperiodic points of $\bP^1(\C)$ under
the action of $f_\l$, then must $V$ be contained in a proper
preperiodic subvariety of $\cX$ under the action of
$\Phi:=(\bff,\bff)$?
\end{question}

\begin{remark}
\label{discussions around Pink-Zilber}
One may ask a very similar question for the case the action on $\bP^1\times \bP^1$ is given by $\Phi:=(\bff,\bfg)$, for two families of rational maps which  contain \emph{no} Latt\`{e}s maps. Constant families of Latt\`{e}s maps constitute counterexamples to a more general extension of Question~\ref{main conjecture} to all families of rational maps $(\bff,\bfg)$ acting on $\bP^1\times\bP^1$. However, we believe that understanding the case of the same family of rational maps acting coordinatewise on $\bP^1\times\bP^1$ (as in Question~\ref{main conjecture}) would be crucial in any further extension of this dynamical Pink-Zilber problem.
\end{remark}

The transversal periodic subschemes of $\cX=\bP^1\times \bP^1$ under the action of $\Phi=(\bff,\bff)$ are
defined by equations of the following form in the set of variables
$(x,y)$ of $\cX$:
\begin{enumerate}
\item[(1)] $x=\bfc$ is a periodic point for $\bff$; or
\item[(2)] $y=\bfc$ is a periodic point for $\bff$; or
\item[(3)] $\f_1(x)=\f_2(y)$ for some maps $\f_i$ which commute with a power of $\bff$.
\end{enumerate}
(See \cite{Medvedev-Scanlon} for an explicit description of plane
curves fixed by the action of $(f,g)$ given by two polynomials $f$ and
$g$.)  Often, in (3), a function $\bff$ commutes only with powers of itself;
this likely accounts for the simple form of the conclusion in our
Theorem~\ref{main result}.  

A special case of Question~\ref{main conjecture} is when $Y=\bA^1$,
$\bff\in R[x]$ where $R=\C[\l]$, and $\bfa,\bfb\in R$.  
In Theorem~\ref{main result} we provide a positive answer to Question~\ref{main conjecture} for the
 family of polynomials in normal form  
\begin{equation}
\label{normal form}
f_{\lambda}(x) = x^d+\sum_{i=0}^{d-2} c_i(\lambda)x^i \;\quad
\text{where $c_i(\lambda)\in \C[\lambda]$ for $i = 0, \ldots,
  d-2,$} 
\end{equation}
together with some mild restriction
on the polynomials $\bfa$ and $\bfb$. Furthermore, note that we do not exclude the case that each $c_i$ is a constant polynomial, in which case $\{f_{\l}\}_{\l\in\C}$ is a constant family of polynomials.

First, as a matter of notation,  we rewrite
\begin{equation}
\label{normal form 22}
f_{\l}(x)=P(x)+\sum_{i=1}^r Q_i(x)\cdot \lambda^{m_i},
\end{equation}
for some polynomial $P\in\C[x]$ in normal form of degree $d$, and some nonnegative integer $r$ and integers $m_0:=0< m_1\dots < m_r$, and some polynomials $Q_i\in\C[x]$ of degrees $0\le e_i\le d-2$. We do not exclude the case $r=0$, in which case $\{f_{\l}\}_{\l}$ is a constant family of polynomials.

Let $\bfa(\l), \bfb(\l)\in \C[\l].$ 
If $\bfa$ is preperiodic for $\bff$, i.e. $\bff^k(\bfa)=\bff^{\ell}(\bfa)$ for some $k\ne \ell$, then for each $\bfb$ one can show  that there are infinitely many $\l\in \C$ such that
 $\bfb(\l)$ (and thus also $\bfa(\l)$) is preperiodic for $f_\l$
  (see also Proposition~\ref{infinitely many preperiodic}). Therefore, we may assume
that $\bfa$ and $\bfb$ are not preperiodic for $\bff$.

\begin{thm}
\label{main result}
Let $\bff:= f_{\lambda}$ be the family of one-parameter polynomials (indexed by all $\lambda\in\C$) given by
$$f_{\lambda}(x):=x^d+\sum_{i=0}^{d-2} c_i(\lambda)x^i=P(x)+\sum_{j=1}^r Q_j(x)\cdot \lambda^{m_j},$$
as above (see \eqref{normal form} and \eqref{normal form 22}).  Let $\bfa,\bfb\in\C[\l]$, and assume there exist nonnegative integers $k$ and $\ell$ such that the following conditions hold
\begin{enumerate}
\item[(i)]
 $f_{\l}^k(\bfa(\l))$ and $f_{\l}^{\ell}(\bfb(\l))$ have the same degree and the same leading coefficient as polynomials in $\l$; and 
\item[(ii)] 
if $m=\deg_{\l}(f_{\l}^k(\bfa(\l)))=\deg_{\l}(f_{\l}^{\ell}(\bfb(\l)))$, then $m\ge m_r$.
\end{enumerate}
Then there exist infinitely many $\lambda\in\C$ such that both $\bfa(\l)$
and $\bfb(\l)$ are preperiodic points for $f_{\lambda}$ if and only if
$f_{\lambda}^k(\bfa(\l))=f_{\lambda}^{\ell}(\bfb(\l))$.
\end{thm}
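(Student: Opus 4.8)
The plan is to deduce the theorem from an equidistribution argument at the various places of $\C(\lambda)$, following the strategy of Baker--DeMarco and Masser--Zannier. The ``only if'' direction is the easy one: if $f_\lambda^k(\bfa(\lambda))=f_\lambda^\ell(\bfb(\lambda))$ as polynomials in $\lambda$, then for every $\lambda_0\in\C$ one has $f_{\lambda_0}^k(\bfa(\lambda_0))=f_{\lambda_0}^\ell(\bfb(\lambda_0))$, so $\bfa(\lambda_0)$ is preperiodic for $f_{\lambda_0}$ exactly when $\bfb(\lambda_0)$ is, and there are infinitely many such $\lambda_0$ by Proposition~\ref{infinitely many preperiodic}. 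The substance is the ``if'' direction, so assume there are infinitely many $\lambda$ for which both $\bfa(\lambda)$ and $\bfb(\lambda)$ are preperiodic for $f_\lambda$; we must show the two forward orbits meet.

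First I would set up the canonical heights. For each $\lambda\in\C$ let $\hhat_{f_\lambda}$ be the canonical height of the polynomial $f_\lambda$ on $\bP^1(\C)$, and let $\hhat_\bff$ denote the corresponding canonical height over the function field $F=\C(\lambda)$ applied to $\bfa,\bfb\in\bP^1(F)$; here conditions (i) and (ii) are exactly what is needed to control the behaviour of the escape rate of $\bfa$ and $\bfb$ at the place $\lambda=\infty$, guaranteeing that $\hhat_\bff(\bfa)$ and $\hhat_\bff(\bfb)$ are computed by the same local contribution at infinity with the same leading term. The hypothesis that both points are simultaneously preperiodic for infinitely many $\lambda$, combined with a specialization/height argument (Call--Silverman specialization, or a direct Northcott-type argument using that $\hhat_{f_\lambda}(\bfa(\lambda))$ and $\hhat_{f_\lambda}(\bfb(\lambda))$ vary like heights times $\hhat_\bff(\bfa)$, $\hhat_\bff(\bfb)$), forces $\hhat_\bff(\bfa)=\hhat_\bff(\bfb)=0$ is \emph{not} what happens — rather, one gets that $\bfa$ and $\bfb$ are \emph{not} preperiodic over $F$ (by the reduction already made in the text) but their bifurcation measures coincide. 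Concretely, I would form the two adelic metrized line bundles / bifurcation currents attached to the sections $\bfa$ and $\bfb$ of the family, show via the arithmetic equidistribution theorem (Baker--Rumely, Chambert-Loir, Thuillier; or Yuan's theorem) that the Galois orbits of the parameters $\lambda$ where $\bfa(\lambda)$ is preperiodic equidistribute to the bifurcation measure $\mu_\bfa$, likewise for $\bfb$, and conclude $\mu_\bfa=\mu_\bfb$ as measures on the parameter space (Berkeley line $\aberk$ over each completion, in particular over $\C$ and over $\C((1/\lambda))$).

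Next I would extract the algebraic conclusion from $\mu_\bfa=\mu_\bfb$. Equality of the bifurcation measures at the archimedean place says the two subharmonic escape-rate functions $\lambda\mapsto \hhat_{f_\lambda}(\bfa(\lambda))$ and $\lambda\mapsto\hhat_{f_\lambda}(\bfb(\lambda))$ differ by a harmonic function on $\C$; controlling growth at $\infty$ via (i)--(ii) pins this harmonic function down and yields $\hhat_{f_\lambda}(\bfa(\lambda))=\hhat_{f_\lambda}(\bfb(\lambda))$ identically. Then I would run a non-archimedean analysis at the bad primes of the polynomials $c_i(\lambda)$ and at $\lambda=\infty$: the green-function identity together with the combinatorial control of the Newton-polygon data of $f_\lambda^k(\bfa(\lambda))$ and $f_\lambda^\ell(\bfb(\lambda))$ (this is where $m\ge m_r$ is used, so that the ``tail'' of $f_\lambda$ does not interfere) forces the two iterates to have matching valuations at every place; by a Weil-height / Northcott argument applied to the polynomial $f_\lambda^k(\bfa(\lambda))-f_\lambda^\ell(\bfb(\lambda))\in\C[\lambda]$, having all local contributions cancel means this polynomial is $0$, i.e.\ $f_\lambda^k(\bfa(\lambda))=f_\lambda^\ell(\bfb(\lambda))$. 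The main obstacle I expect is precisely this last passage — converting equality of bifurcation measures into the \emph{exact} polynomial identity rather than merely equality of canonical heights; this is where one genuinely needs the classification of invariant curves on $\bP^1\times\bP^1$ (Medvedev--Scanlon) to rule out a ``curve of type (3)'' $\varphi_1(x)=\varphi_2(y)$ with $\varphi_i$ commuting with a power of $\bff$ but not giving an orbit collision, and to exploit that for a generic polynomial family the only maps commuting with a power of $f_\lambda$ are its own iterates, which collapses case (3) to the stated conclusion $f_\lambda^k(\bfa(\lambda))=f_\lambda^\ell(\bfb(\lambda))$.
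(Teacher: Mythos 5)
Your setup is on the right track: the $v$-adic Mandelbrot sets $M_{\bfa,v}$ and $M_{\bfb,v}$ are defined, the local canonical height is identified as a multiple of the Green's function of $M_{\bfc,v}$, and Baker--Rumely equidistribution is applied to the Galois orbits of parameters where both points are preperiodic to conclude $M_{\bfa,v}=M_{\bfb,v}$ for all $v$. This is indeed the paper's strategy, and the ``only if'' direction matches Proposition~\ref{infinitely many preperiodic}.

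However, there is a genuine gap at the decisive final step, and your proposed fix does not work. You write that from the green-function identity you would argue place-by-place (``Newton polygon data'', ``matching valuations at every place''), and then invoke Medvedev--Scanlon to pass to the polynomial identity. This cannot succeed: equal $v$-adic absolute values of two polynomials in $\lambda$ for all $\lambda$ and $v$ does not force the polynomials to be equal (think of $x$ and $-x$ over $\Q$), and the Medvedev--Scanlon classification of $(f,g)$-invariant curves is simply not available from an equality of bifurcation measures --- you do not know in advance that $(\bfa,\bfb)$ parametrizes an invariant curve; that is precisely what one is trying to establish. The paper instead works at a \emph{single} archimedean place (see Remark~\ref{some remark is important}) and uses genuine complex analysis: Proposition~\ref{domain of analyticity} shows that for $|\lambda|$ large the B\"ottcher coordinate $\phi_\lambda$ is defined at $\bfc(\lambda)$, so that $\lambda\mapsto \phi_\lambda(\bfc(\lambda))$ is an analytic function with expansion $q_m\lambda^m + O(\lambda^{m-1})$ (see~\eqref{three bis}). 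Equality of the archimedean Green's functions then gives $|\phi_\lambda(\bfa(\lambda))|=|\phi_\lambda(\bfb(\lambda))|$, and the Open Mapping Theorem applied to the analytic ratio $\phi_\lambda(\bfa(\lambda))/\phi_\lambda(\bfb(\lambda))$ shows it is a constant of modulus one; condition (i) (equal leading coefficients) forces that constant to be $1$, and injectivity of $\phi_\lambda$ near $\infty$ gives $\bfa=\bfb$ after the replacement by iterates. This analyticity-plus-Open-Mapping argument is the heart of the proof and is absent from your proposal.

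A second, independent omission: you do not address the passage from $\Qbar$-coefficients to arbitrary complex coefficients. This is not routine, because Benedetto's theorem (preperiodic $\Leftrightarrow$ canonical height zero over a function field) requires the specialized polynomial $f_\lambda$ to be non-isotrivial. The paper handles this in Section~\ref{transcendental}: one must separately analyze the set $S$ of parameters $\lambda$ at which $f_\lambda$ is conjugate to a polynomial over $\Qbar$ (Lemma~\ref{not over Qbar}), show that $\Prep(\bfa)$ and $\Prep(\bfb)$ can differ only inside $S$ (Proposition~\ref{finitely many lambdas bad}), and deal with the case that $S$ is infinite by a descent argument (Lemma~\ref{algebraic geometry}) that reduces to the $\Qbar$-coefficient case. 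Without this, the equidistribution input only identifies $\{\lambda : h_{\M_\bfa}(\lambda)=0\}$ with $\{\lambda : h_{\M_\bfb}(\lambda)=0\}$, which over $\C$ is strictly larger than the preperiodic locus and does not directly yield $M_a = M_b$.
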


\begin{remarks}
  \label{first important remark}
  (a) In Theorem~\ref{main result}, the $1$-dimensional
$\C$-scheme $(\bfa,\bfb)\subset \cX:=\bP^1_{\C(\l)}\times
_{\C(\l)}\bP^1_{\C(\l)}$ is contained in the $2$-dimensional
$\C$-subscheme $\cY$ of $\cX$ given by the equation:
$$\bff^k(x)=\bff^{\ell}(y),$$
where $(x,y)$ are the coordinates of $\cX$. Such a $\cY$ is
fixed by the action of $(\bff,\bff)$ on $\cX$, as predicted by
Question~\ref{main conjecture}.
\\
(b) It follows from the Lefschetz Principle that the same statements
in Theorem~\ref{main result} hold if we replace $\C$ by any other
algebraically closed complete valued field of characteristic 0.
\\
(c) We note that  if $\bfc\in \C[\l]$ has the property that there exists $k\in\N$ such that $\deg_{\l}(f_{\l}^k(\bfc(\l)))=m$ has the property (ii) from Theorem~\ref{main result}, then $\bfc$ is not preperiodic for $\bff$ (see Lemma~\ref{lem:degree in l}).
\\
(d) If $\bff$ is not a constant family, then it follows
from Benedetto's theorem \cite{Rob} that $\bfc\in \C[\l]$ is not 
preperiodic for $\bff$ if and only if there exists $k\in\N$ such that
$\deg_{\l}(f_{\l}^k(\bfc(\l))) \ge m_r.$ On the other hand,
if $\bff$ is a constant family of polynomials defined over $\C$, i.e. $r=0$ and $m_0=0$ in Theorem~\ref{main result}, then implicitly $m>0$ (otherwise the conclusion holds trivially). 
\end{remarks}

In particular, the case where $\bff$ is a constant family of polynomials yields the following result.
\begin{cor}
\label{Bogomolov 0}
Let $f\in\C[x]$ be a polynomial of degree $d\ge 2$, and let $\bfa,\bfb\in\C[\l]$ be two polynomials of same degree and with the same leading coefficient. If there exist infinitely many $\l\in\C$ such that both $\bfa(\l)$ and $\bfb(\l)$ are preperiodic for $f$, then $\bfa=\bfb$.  
\end{cor}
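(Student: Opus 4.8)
The plan is to obtain Corollary~\ref{Bogomolov 0} as the special case of Theorem~\ref{main result} in which the family of polynomials is constant and one takes $k=\ell=0$. First dispose of the degenerate case: if $\deg_{\l}\bfa=\deg_{\l}\bfb=0$, then $\bfa$ and $\bfb$ are constants whose leading coefficients are the constants themselves, so the assumption that they share a leading coefficient forces $\bfa=\bfb$. Hence from now on assume $n:=\deg_{\l}\bfa=\deg_{\l}\bfb\ge 1$.

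Since Theorem~\ref{main result} is stated for families in normal form, the next step is to reduce to that situation by conjugation. As recalled in the introduction, there is a linear polynomial $\delta\in\C[x]$ with $g:=\delta^{-1}\circ f\circ\delta$ in normal form (this uses that $\C$ contains a $(d-1)$-st root of the leading coefficient of $f$). Write $\delta^{-1}(x)=ux+v$ with $u\neq 0$, and put $\bfa^{*}(\l):=\delta^{-1}(\bfa(\l))=u\bfa(\l)+v$ and $\bfb^{*}(\l):=\delta^{-1}(\bfb(\l))=u\bfb(\l)+v$. These lie in $\C[\l]$, still have common degree $n\ge 1$, and have common leading coefficient $u$ times the common leading coefficient of $\bfa$ and $\bfb$ (the added constant $v$ does not affect the degree-$n$ term). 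Moreover $g^{m}=\delta^{-1}\circ f^{m}\circ\delta$ for every $m$, so, $\delta$ being a bijection, $\bfa(\l)$ is preperiodic for $f$ if and only if $\bfa^{*}(\l)$ is preperiodic for $g$, and likewise for $\bfb$. Thus the hypothesis of the corollary becomes: there are infinitely many $\l\in\C$ for which both $\bfa^{*}(\l)$ and $\bfb^{*}(\l)$ are preperiodic for $g$.

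Now apply Theorem~\ref{main result} to the constant family $\bff=g$ (so $r=0$, $P=g$, and $m_{r}=m_{0}=0$), with $k=\ell=0$. Condition~(i) of the theorem holds because $f_{\l}^{0}(\bfa^{*}(\l))=\bfa^{*}(\l)$ and $f_{\l}^{0}(\bfb^{*}(\l))=\bfb^{*}(\l)$ have the same degree $n$ and the same leading coefficient; condition~(ii) holds because $m=\deg_{\l}\bfa^{*}(\l)=n\ge 1>0=m_{r}$. The theorem then asserts that there are infinitely many $\l$ for which both $\bfa^{*}(\l)$ and $\bfb^{*}(\l)$ are preperiodic for $g$ if and only if $f_{\l}^{0}(\bfa^{*}(\l))=f_{\l}^{0}(\bfb^{*}(\l))$, i.e.\ if and only if $\bfa^{*}=\bfb^{*}$ as polynomials in $\l$; since $\delta$ is injective this is equivalent to $\bfa=\bfb$. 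As the hypothesis of the corollary furnishes the infinitude of such $\l$, we conclude $\bfa=\bfb$.

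There is no genuinely hard step here: all the substance lies in Theorem~\ref{main result}. The only points requiring attention are checking that conjugation by the linear map $\delta$ preserves the ``same degree and leading coefficient'' hypothesis, and isolating the degenerate case $\deg_{\l}\bfa=0$ beforehand, which is exactly what makes condition~(ii) available — in the constant-family case that condition forces $m>0$, cf.\ Remark~\ref{first important remark}(d).
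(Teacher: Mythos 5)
Your proof is correct and follows the paper's own approach: conjugate $f$ to normal form by a linear $\delta$, replace $\bfa,\bfb$ by $\delta^{-1}(\bfa),\delta^{-1}(\bfb)$ (which preserves the common degree and leading coefficient), and invoke Theorem~\ref{main result} with $k=\ell=0$. You additionally spell out the check of conditions (i)--(ii) and isolate the trivial constant case, which the paper leaves implicit.
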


\begin{proof}
The result is an immediate consequence of Theorem~\ref{main result} once we observe, as before, that we may replace $f$ with a conjugate $\delta^{-1}\circ f\circ\delta$ of itself which is a polynomial in normal form. (Note that in this case we also replace $\bfa$ and $\bfb$ by $\delta^{-1}(\bfa)$ and respectively $\delta^{-1}(\bfb)$ which are also polynomials in $\lambda$ of same degree and same leading coefficient.)
\end{proof}

An interesting special case of Corollary~\ref{Bogomolov 0} is the following result.
\begin{cor}
\label{first corollary 0}
Let $f\in\C[x]$ be a polynomial of degree $d\ge 2$, let $g\in\C[x]$ be
a nonconstant polynomial, and let $c\in\C^*$. Then there exist at most
finitely many $\l\in\C$ such that both $g(\l)$ and $g(\l+c)$ are
preperiodic for $f$.
\end{cor}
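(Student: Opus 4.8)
The plan is to deduce Corollary~\ref{first corollary 0} as an immediate consequence of Corollary~\ref{Bogomolov 0}. Set $\bfa(\l):=g(\l)$ and $\bfb(\l):=g(\l+c)$, two polynomials in $\C[\l]$. The first thing I would check is that $\bfa$ and $\bfb$ have the same degree and the same leading coefficient: writing $n:=\deg g\ge 1$ and letting $a_n\ne 0$ be the leading coefficient of $g$, substituting $\l\mapsto \l+c$ does not change the top-degree term, so both $\bfa$ and $\bfb$ have degree $n$ and leading coefficient $a_n$. Thus the hypotheses of Corollary~\ref{Bogomolov 0} are met, with the (constant) polynomial $f$ of degree $d\ge 2$.

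Next, suppose for contradiction that there are infinitely many $\l\in\C$ for which both $g(\l)$ and $g(\l+c)$ are preperiodic for $f$. Then Corollary~\ref{Bogomolov 0} forces $\bfa=\bfb$, i.e. $g(\l)=g(\l+c)$ as polynomials in $\l$. I would then rule this out directly: the polynomial $g(\l+c)-g(\l)$ has degree exactly $n-1$, with leading term $n\,c\,a_n\,\l^{n-1}$, which is nonzero because $c\ne 0$ and $n\ge 1$ (in the degenerate case $n=1$ this difference is the nonzero constant $c\,a_1$). Hence $g(\l+c)-g(\l)$ is not the zero polynomial, contradicting $\bfa=\bfb$, and the corollary follows.

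There is no genuine obstacle here beyond correctly invoking Corollary~\ref{Bogomolov 0}; the only points requiring a modicum of care are the verification that $g(\l)$ and $g(\l+c)$ share a leading coefficient (so that Corollary~\ref{Bogomolov 0} applies) and the handling of the case $\deg g = 1$ when showing $g(\l+c)\ne g(\l)$.
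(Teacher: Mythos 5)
Your proof is correct and follows exactly the route the paper intends: the paper presents this corollary as an immediate special case of Corollary~\ref{Bogomolov 0} without writing out the details, and your verification that $g(\l)$ and $g(\l+c)$ share degree and leading coefficient, followed by the observation that $g(\l+c)-g(\l)$ has nonzero leading term $n\,c\,a_n\,\l^{n-1}$ (so the two polynomials cannot coincide when $c\ne 0$), is precisely what is needed.
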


Corollary~\ref{Bogomolov 0} provides a positive answer to a special
case of Zhang's Dynamical Manin-Mumford Conjecture, which states that
for a polarizable endomorphism $\Phi: X \lra X$ on a projective variety, the only
subvarieties of $X$ containing a dense of set of preperiodic points
are those subvarieties which are themselves preperiodic under $f$ (see
\cite[Conjecture 2.5]{Zhang} or \cite[Conjecture 1.2.1, Conjecture
4.1.7]{Zhang-lec} for details). This conjecture turns out to be false
in general (see \cite{IMRN}), but it may be true in many cases.  For
example, let $X:=\bP^1\times \bP^1$, and $\Phi(x,y):=(f(x),f(y))$ for
a polynomial $f$ of degree $d\ge 2$, and $Y$ be the Zariski closure in
$X$ of the set $\{(\bfa(z),\bfb(z))\text{ : }z\in\C\}$, where $\bfa,
\bfb\in\C[x]$ are polynomials of same degree and with the same
leading coefficient; Corollary~\ref{Bogomolov 0} implies that if $Y$
contains infinitely many points preperiodic under $\Phi$, then $Y$ is
the diagonal subvariety of $X$, and thus is itself preperiodic
under $\Phi$.  Corollary~\ref{Bogomolov 0} also has consequences for a case of a revised
Dynamical Manin-Mumford Conjecture \cite[Conjecture 1.4]{IMRN} (see
Section~\ref{smc} for details).

Theorem~\ref{main result} generalizes the main result of
\cite{Matt-Laura} in two ways. On the one hand,  in the case
$\bfa$ and $\bfb$ are both constant we can prove a generalization of
the main result from \cite{Matt-Laura} as follows. 
\begin{thm}
\label{extension to C}
Let $a,b\in\C$, let $d\ge 2$, and let $c_0,\cdots, c_{d-2}\in\C[\l]$ such that $\deg(c_0)>\deg(c_i)$ for each $i=1,\dots,d-2$. If there are infinitely many $\l\in\C$ such that both $a$ and $b$ are preperiodic for 
$$f_{\l}(x):=x^d+\sum_{i=0}^{d-2}c_i(\l)x^i,$$
then $f_{\l}(a)=f_{\l}(b)$.
\end{thm}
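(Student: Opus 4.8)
The plan is to derive Theorem~\ref{extension to C} directly from Theorem~\ref{main result}, applied with the constant families $\bfa:=a$, $\bfb:=b$ and with $k=\ell=1$. Once the hypotheses of Theorem~\ref{main result} are checked for this choice, that theorem gives immediately that there are infinitely many $\lambda\in\C$ with both $a$ and $b$ preperiodic for $f_\lambda$ if and only if $f_\lambda^1(a)=f_\lambda^1(b)$, i.e. if and only if $f_\lambda(a)=f_\lambda(b)$, which is precisely the desired conclusion. So everything reduces to verifying conditions (i) and (ii) of Theorem~\ref{main result}. (I read the statement as requiring $\deg(c_0)\ge 1$: if $\deg(c_0)=0$ then $\deg(c_i)<0$, so $c_i=0$ for $1\le i\le d-2$ and $f_\lambda$ does not depend on $\lambda$, a degenerate case in which the assertion should be understood to be empty.)

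For condition (i), I would evaluate, for an arbitrary constant $\gamma\in\C$,
$$f_\lambda(\gamma)=\gamma^d+\sum_{i=0}^{d-2}c_i(\lambda)\,\gamma^i,$$
and compute its degree and leading coefficient as a polynomial in $\lambda$. The term with $i=0$ contributes $c_0(\lambda)$, of $\lambda$-degree $\deg(c_0)$, while every other term has $\lambda$-degree $\deg(c_i)<\deg(c_0)$; hence there is no cancellation in top degree, so $\deg_\lambda f_\lambda(\gamma)=\deg(c_0)$ and the $\lambda$-leading coefficient of $f_\lambda(\gamma)$ is the leading coefficient of $c_0$ — in particular both are independent of $\gamma$. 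Applying this with $\gamma=a$ and with $\gamma=b$ shows that $f_\lambda^1(\bfa(\lambda))$ and $f_\lambda^1(\bfb(\lambda))$ have the same degree $m:=\deg(c_0)$ and the same leading coefficient in $\lambda$, which is condition (i).

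For condition (ii), I would identify $m_r$ from the regrouping \eqref{normal form 22}: collecting $f_\lambda(x)=x^d+\sum_i c_i(\lambda)x^i$ by powers of $\lambda$, the highest power of $\lambda$ with a nonzero coefficient in $\C[x]$ is $\max_{0\le i\le d-2}\deg(c_i)=\deg(c_0)$, so $m_r=\deg(c_0)=m$ and in particular $m\ge m_r$. With (i) and (ii) in hand, Theorem~\ref{main result} applies and the proof is complete.

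Along this route there is no real obstacle — Theorem~\ref{extension to C} is a corollary of Theorem~\ref{main result}, and the only point requiring a moment of attention is the elementary fact that, because $\deg(c_0)$ strictly exceeds the degrees of the other coefficients, both the $\lambda$-degree and the $\lambda$-leading coefficient of $f_\lambda(\gamma)$ do not see $\gamma$; this is exactly what makes condition (i) automatic for \emph{every} pair $a,b$, and it is the feature that fails in the excluded constant-family case. If one instead wished to argue from scratch in the style of \cite{Matt-Laura}, one would first reduce to $a,b$ and the $c_i$ algebraic, introduce the escape-rate functions $G_\gamma(\lambda)=\lim_{n\to\infty}d^{-n}\log^+\lvert f_\lambda^n(\gamma)\rvert$ for $\gamma\in\{a,b\}$, invoke arithmetic equidistribution to conclude that $\Delta G_a=\Delta G_b$ whenever the common locus of preperiodicity is infinite, and then compare the asymptotic expansions of $G_a$ and $G_b$ near $\lambda=\infty$ to force $f_\lambda(a)=f_\lambda(b)$; there the delicate step would be the asymptotic matching at $\lambda=\infty$, but this work is already carried out inside Theorem~\ref{main result}.
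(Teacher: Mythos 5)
Your proof is correct and essentially matches the paper's: the paper invokes Theorem~\ref{main result} with $\bfa(\l):=f_\l(a)$, $\bfb(\l):=f_\l(b)$ and $k=\ell=0$, while you take $\bfa:=a$, $\bfb:=b$ with $k=\ell=1$, a purely cosmetic repackaging that leads to the identical degree and leading-coefficient check on $f_\l(a)$ and $f_\l(b)$. Your explicit verification of (i)--(ii) and the remark about the implicit nondegeneracy $\deg(c_0)\ge 1$ are accurate (and more detailed than the paper's one-line proof).
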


\begin{proof}
We apply Theorem~\ref{main result} for $\bfa(\l):=f_{\l}(a)$ and $\bfb(\l):=f_{\l}(b)$.
\end{proof}

Theorem~\ref{extension to C} yields the following generalization of the main result from \cite{Matt-Laura}.
\begin{cor}
\label{second corollary}
Let $f\in\C[x]$ be any polynomial of degree $d\ge 2$, and let $a,b\in\C$.  Then there exist infinitely many $\l\in\C$ such that both $a$ and $b$ are preperiodic for $f(x)+\l$ if and only if $f(a)=f(b)$. 
\end{cor}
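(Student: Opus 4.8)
The plan is to deduce Corollary~\ref{second corollary} directly from Theorem~\ref{extension to C} (equivalently, from Corollary~\ref{second corollary}'s slightly stronger sibling above), after putting $f(x)+\lambda$ into the shape required by that theorem. Write $f(x)=c_dx^d+c_{d-1}x^{d-1}+\cdots+c_0$ with $c_d\neq 0$. First I would conjugate by a linear polynomial $\delta$ to bring $f$ into normal form: choosing $\delta(x)=c_d^{-1/(d-1)}x-c_{d-1}/d$ as in the introduction, the map $g(x):=\delta^{-1}\circ f\circ\delta$ is monic with vanishing $x^{d-1}$-coefficient. The key observation is that conjugating the \emph{family} $f(x)+\lambda$ by the \emph{same} $\delta$ (which does not depend on $\lambda$) gives $\delta^{-1}\circ(f+\lambda)\circ\delta=g(x)+c_d^{1/(d-1)}\lambda$, since $\delta^{-1}$ adds back $c_{d-1}/d$ and rescales the additive constant $\lambda$ by $c_d^{1/(d-1)}$. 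Reparametrizing $\lambda$ by the nonzero scalar $c_d^{1/(d-1)}$ is a bijection of $\C$, so it changes neither the finiteness question nor the set of $\lambda$ under consideration; hence we may assume from the start that $f$ is monic in normal form and the family is literally $f_\lambda(x)=f(x)+\lambda$.

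Next I would record that this family is exactly of the type \eqref{normal form} with $c_0(\lambda)=(\text{constant})+\lambda$ linear in $\lambda$ and $c_1(\lambda),\dots,c_{d-2}(\lambda)$ all constant. Thus $\deg(c_0)=1>0=\deg(c_i)$ for $i=1,\dots,d-2$, so the hypothesis of Theorem~\ref{extension to C} is satisfied. Note that we must also track how the conjugation and reparametrization move the starting points: $a$ and $b$ get replaced by $a':=\delta^{-1}(a)$ and $b':=\delta^{-1}(b)$, and $a$ is preperiodic for $f(x)+\lambda$ if and only if $a'$ is preperiodic for $g(x)+c_d^{1/(d-1)}\lambda$, because $\delta$ conjugates the two dynamical systems and so carries preperiodic orbits to preperiodic orbits bijectively. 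Therefore ``infinitely many $\lambda$ with both $a,b$ preperiodic for $f(x)+\lambda$'' is equivalent to ``infinitely many $\mu$ with both $a',b'$ preperiodic for $g(x)+\mu$.''

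Applying Theorem~\ref{extension to C} to the (normal-form) family $g_\mu(x)=g(x)+\mu$ and the constants $a',b'$, I get that the existence of infinitely many such $\mu$ is equivalent to $g_\mu(a')=g_\mu(b')$, i.e.\ $g(a')+\mu=g(b')+\mu$, i.e.\ $g(a')=g(b')$. Finally I would undo the conjugation: $g(a')=g(b')$ means $\delta^{-1}(f(\delta(a')))=\delta^{-1}(f(\delta(b')))$; since $\delta^{-1}$ is injective and $\delta(a')=a$, $\delta(b')=b$, this is equivalent to $f(a)=f(b)$. This establishes both implications of Corollary~\ref{second corollary} at once. The only points requiring care — and the only places I expect any friction at all — are the purely formal bookkeeping of how $\delta$ and the scalar reparametrization act on the additive parameter and on the base points; there is no genuine obstacle here, since Theorem~\ref{extension to C} is doing all of the real work. (One should also note the degenerate case: if it happened that $a'=b'$ already, or that $g(a')=g(b')$ with $a'\neq b'$, both are allowed and consistent with the stated conclusion $f(a)=f(b)$.)
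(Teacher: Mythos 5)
Your argument is correct and takes essentially the same route as the paper: conjugate $f$ into normal form via a $\lambda$-independent linear $\delta$, observe that the family $f+\lambda$ becomes $g+c_d^{1/(d-1)}\lambda$ under that conjugation (a reparametrization that preserves infiniteness and finiteness of the relevant parameter sets), and then invoke Theorem~\ref{extension to C}. The bookkeeping you spell out (the computation $\delta^{-1}\circ(f+\lambda)\circ\delta = g+c_d^{1/(d-1)}\lambda$, the identification of $c_0(\mu)$ as the only non-constant coefficient, the chase through $g(a')=g(b')\Leftrightarrow f(a)=f(b)$) is exactly what the paper's one-line proof is implicitly relying on.

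One small caveat worth noting: as stated, Theorem~\ref{extension to C} only gives the forward implication (``if infinitely many $\mu$ then $g_\mu(a')=g_\mu(b')$''), while Corollary~\ref{second corollary} is an if-and-only-if. Your final step treats Theorem~\ref{extension to C} as an equivalence. This is harmless because the underlying Theorem~\ref{main result} (from which Theorem~\ref{extension to C} is deduced in a single line) is genuinely an iff, and the converse direction is in any case elementary: if $g(a')=g(b')$, then $a'$ is preperiodic for $g+\mu$ iff $b'$ is, so the set of good $\mu$ is just $\{\mu: a'\text{ preperiodic}\}$, which is infinite by Proposition~\ref{infinitely many preperiodic} applied to $\bfc(\mu)=g(a')+\mu$. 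It would be cleaner to either cite Theorem~\ref{main result} directly for the biconditional, or add a sentence handling the converse as above.
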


\begin{proof}
Note that in this case we may drop the hypothesis that $f(x)$ is in normal form since, as explained in the proof of Corollary~\ref{Bogomolov 0}, we may conjugate $f(x)$ by a linear polynomial so that it becomes a polynomial in normal form. 
\end{proof}

On the other hand, using our Theorem~\ref{main result} we are able to treat the case
where the pair of points $\bfa$ and $\bfb$ depend algebraically on the
parameter. This answers a question raised by Silverman
mentioned in \cite[\S~1.1]{Matt-Laura}. 
Furthermore, by taking $\bff = f(x) + \lambda$, 
Theorem~\ref{main result} yields the following.

\begin{cor}
\label{first corollary}
Let $f\in\C[x]$ be any polynomial of degree $d\ge 2$, and let $\bfa, \bfb\in \C[\l]$ be polynomials such that $\bfa$ and $\bfb$ have the same degree and the same leading coefficient. Then there are infinitely many $\l\in\C$ such that both $\bfa(\l)$ and $\bfb(\l)$ are preperiodic under the action of $f(x)+\l$ if and only if $\bfa(\l)=\bfb(\l)$. 
\end{cor}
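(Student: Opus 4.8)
The plan is to deduce Corollary~\ref{first corollary} from Theorem~\ref{main result} applied to the one-parameter family $\bff := f(x)+\l$. First I would reduce to the case where $f$ is in normal form, just as in the proof of Corollary~\ref{Bogomolov 0}: conjugating $f$ by a linear polynomial $\delta(x)=\alpha x+\beta$ with $g:=\delta^{-1}\circ f\circ\delta$ in normal form turns the family $\{f(x)+\l\}_\l$ into $\{g(x)+\l/\alpha\}_\l$, and after the bijective reparametrization $\mu:=\l/\alpha$ this is a family of the shape \eqref{normal form}--\eqref{normal form 22} with $P=g$, $r=1$, $Q_1\equiv 1$, and $m_1=m_r=1$. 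This reduction replaces $\bfa,\bfb$ by $\delta^{-1}(\bfa(\alpha\mu))$ and $\delta^{-1}(\bfb(\alpha\mu))$, which again have the same degree and the same leading coefficient, and it changes neither the assertion ``$\bfa=\bfb$'' nor the existence of infinitely many parameters making both points preperiodic. So I may assume $f$ is monic in normal form, $\bff=f(x)+\l$, and hence $m_r=1$.

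Next I would apply Theorem~\ref{main result} with $k=\ell=1$. Writing $n:=\deg_\l\bfa=\deg_\l\bfb$ and letting $u$ be their common leading coefficient, monicity of $f$ shows that $f_\l(\bfa(\l))=f(\bfa(\l))+\l$ and $f_\l(\bfb(\l))=f(\bfb(\l))+\l$ have the same degree in $\l$ (namely $dn$ if $n\ge 1$, and $1$ if $n=0$) and the same leading coefficient (namely $u^d$ if $n\ge 1$, and $1$ if $n=0$, in which case in fact $\bfa=\bfb$ already); thus hypothesis~(i) holds, and $m:=\deg_\l(f_\l(\bfa(\l)))\ge 1=m_r$, so hypothesis~(ii) holds too. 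Theorem~\ref{main result} then yields that there are infinitely many $\l\in\C$ with $\bfa(\l)$ and $\bfb(\l)$ both preperiodic for $f_\l$ if and only if $f(\bfa(\l))+\l=f(\bfb(\l))+\l$ in $\C[\l]$, that is, if and only if $f(\bfa(\l))=f(\bfb(\l))$ identically.

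Finally I would check that $f(\bfa(\l))=f(\bfb(\l))$ in $\C[\l]$ forces $\bfa=\bfb$ (the reverse implication being trivial). This is clear when $n=0$, so suppose $n\ge 1$ and $\bfa\ne\bfb$. Write $f(X)-f(Y)=(X-Y)H(X,Y)$ with $H\in\C[X,Y]$ of total degree $d-1$; since $f$ is monic, the degree-$(d-1)$ homogeneous part of $H$ equals $\sum_{i=0}^{d-1}X^iY^{d-1-i}$. As $\C[\l]$ is a domain and $\bfa-\bfb\ne 0$, we would get $H(\bfa(\l),\bfb(\l))=0$; but plugging in $\bfa(\l)=u\l^n+\cdots$ and $\bfb(\l)=u\l^n+\cdots$ and reading off the leading term in $\l$ gives $H(\bfa(\l),\bfb(\l))=d\,u^{d-1}\l^{n(d-1)}+\cdots$, which is nonzero since $u\ne 0$ and $d\ge 2$ --- a contradiction. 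So $\bfa=\bfb$, as wanted. The real content is all in Theorem~\ref{main result}; I expect the only delicate points to be the bookkeeping in the first step (especially tracking how normalizing $f$ merely rescales $\l$) and this last elementary fact that a polynomial of degree $d\ge 2$ cannot identify two distinct elements of $\C[\l]$ sharing their degree and leading coefficient.
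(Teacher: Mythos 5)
Your proof is correct, but it takes a small detour that the paper avoids. After reducing to $g$ in normal form and reparametrizing, the paper simply applies Theorem~\ref{main result} with $k=\ell=0$: conditions (i) and (ii) already hold with that choice (same degree $n\ge 1$ and leading coefficient for $\bfa,\bfb$, and $m=n\ge 1=m_r$), so the theorem's conclusion is directly $\bfa=\bfb$. You instead take $k=\ell=1$, which also satisfies (i)--(ii) but only yields $f(\bfa)+\l=f(\bfb)+\l$, i.e.\ $f(\bfa)=f(\bfb)$, and you then need your factorization-and-leading-term argument to push back from $f(\bfa)=f(\bfb)$ to $\bfa=\bfb$. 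That extra step is correct (and is itself a nice elementary fact about polynomials of degree $\ge 2$ evaluated at two elements of $\C[\l]$ with the same leading term), but it is unnecessary: choosing $k=\ell=0$ makes the conclusion of Theorem~\ref{main result} coincide with the target. Your bookkeeping of the conjugation --- noting that $\delta^{-1}\circ(f+\l)\circ\delta=g+\l/\alpha$ and tracking the rescaled parameter --- is actually a touch more precise than the paper's phrasing, but this does not affect the argument either way.
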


\begin{proof}
Firstly, the theorem is vacuously true if $\bfa$ and $\bfb$ are constant polynomials, since then they are automatically equal because they have the same leading coefficient. So, we may assume that $\deg(\bfa)=\deg(\bfb)\ge 1$.

Secondly, we conjugate $f(x)$  by some linear polynomial $\delta\in\C[x]$ such that $g:=\delta^{-1}\circ f\circ \delta$ is a polynomial in normal form. Then we apply Theorem~\ref{main result} to the family of  polynomials $g(x)+\delta^{-1}(\l)$ and to the starting points $\delta^{-1}(\bfa(\l))$ and $\delta^{-1}(\bfb(\l))$. Since $\bfa$ and $\bfb$ are polynomials of same positive degree and same leading coefficient, it is immediate to check that  conditions (i)-(ii) of Theorem~\ref{main result} hold for $k=\ell=0$. Therefore, $\bfa(\l)=\bfb(\l)$ as desired.
\end{proof}

An important special case of Corollary~\ref{first corollary} is the following result.
\begin{cor}
\label{first 1 corollary}
Let $f\in\C[x]$ be any polynomial of degree $d\ge 2$, let $g\in\C[x]$ be any nonconstant polynomial, and let $c\in\C^*$. Then there exist at most finitely many $\l\in\C$ such that both $g(\l)$ and $g(\l+c)$ are preperiodic for $f(x)+\l$. 
\end{cor}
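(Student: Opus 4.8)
The plan is to obtain this as an immediate application of Corollary~\ref{first corollary}, in exactly the way Corollary~\ref{first corollary 0} was obtained from Corollary~\ref{Bogomolov 0} in the constant-$f$ setting. Concretely, I would set $\bfa(\l):=g(\l)$ and $\bfb(\l):=g(\l+c)$, regarded as elements of $\C[\l]$, and apply Corollary~\ref{first corollary} to the one-parameter family $f(x)+\l$ together with these two starting points.

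The first step is to check that the hypotheses of Corollary~\ref{first corollary} are satisfied. Since $g$ is nonconstant, $\deg_\l(g(\l))=\deg_\l(g(\l+c))=\deg(g)\ge 1$; and the translation $\l\mapsto\l+c$ does not affect the top-degree coefficient, so $g(\l)$ and $g(\l+c)$ have the same degree and the same leading coefficient. Thus $\bfa$ and $\bfb$ meet the requirements of Corollary~\ref{first corollary}.

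The second step: assume for contradiction that there are infinitely many $\l\in\C$ for which both $g(\l)$ and $g(\l+c)$ are preperiodic for $f(x)+\l$. Corollary~\ref{first corollary} then forces the polynomial identity $g(\l)=g(\l+c)$ in $\C[\l]$. The final step is to rule this out: writing $n:=\deg(g)\ge 1$ with leading coefficient $a_n\ne 0$ and comparing the coefficients of $\l^{n-1}$ on both sides of $g(\l+c)=g(\l)$ gives $n\,a_n\,c=0$, contradicting $c\in\C^*$. (Equivalently, a nonconstant complex polynomial cannot be invariant under a nonzero translation.) This contradiction yields the claim.

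There is no real obstacle in this deduction; all of it is bookkeeping, and the entire mathematical content is already packaged inside Corollary~\ref{first corollary}, which in turn rests on Theorem~\ref{main result}. The only point requiring a (trivial) argument beyond citing the corollary is the observation that $g(\l)\ne g(\l+c)$ when $g$ is nonconstant and $c\ne 0$, so the "infinitely many" alternative is excluded.
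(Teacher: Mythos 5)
Your proof is correct and follows exactly the route the paper intends: the paper presents this corollary as an immediate special case of Corollary~\ref{first corollary}, and you carry out the (only) nontrivial bookkeeping step, namely that a nonconstant $g\in\C[\l]$ cannot satisfy $g(\l)=g(\l+c)$ identically when $c\ne 0$, by comparing coefficients of $\l^{\deg(g)-1}$.
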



We prove Theorem~\ref{main result} first for the case when both $\bfa$ and $\bfb$, and also each of the $c_i$'s have algebraic coefficients, and then we extend our proof to the general case. For the extension to $\C$, we use a result of Benedetto \cite{Rob} (see also its extension \cite{Matt-nonisotrivial} to arbitrary rational maps) which states that for a polynomial $f$ of degree at least equal to $2$ defined over a function field $K$ of finite transcendence degree over a subfield $K_0$, if $f$ is not isotrivial (i.e., $f$ is not conjugate to a polynomial defined over $\bar{K_0}$), then each $x\in\bar{K}$ is preperiodic if and only if its canonical height $\hhat_f(x)$ equals $0$. Strictly speaking, Benedetto's result is stated for function fields of transcendence degree $1$, but a simple inductive argument on the transcendence degree yields the result for function fields of arbitrary finite transcendence degree (see also \cite[Corollary 1.8]{Matt-nonisotrivial} where Baker extends Benedetto's result to rational maps defined over function fields of arbitrary finite transcendence degree).

Our results and proofs are inspired by the results of
\cite{Matt-Laura} so that the strategy for the proof of Theorem~\ref{main
  result} essentially follows the ideas in the paper \cite{Matt-Laura}. 
However there are significantly more technical difficulties in our proofs. 
The plan of our proof is to use the $v$-adic generalized
Mandelbrot sets introduced in \cite{Matt-Laura} for the family of
polynomials $f_{\l}$, and then  use the equidistribution result of
Baker-Rumely from \cite{Baker-Rumely}. A key ingredient is
Proposition~\ref{prop:local height and green's function} which says
that the canonical local height of the point in question at the place
$v$ is a constant multiple of the Green's function associated to the
$v$-adic generalized Mandelbrot set. Then the condition that
$\bfa(\l)$ ($\bfb(\l)$) are preperiodic is translated
to the condition that the heights $h_{\M_{\bfa}}(\l)$ ($h_{\M_{\bfb}}(\l)$
respectively) are zero, for the corresponding parameter $\l$. Therefore
the equidistribution result of Baker-Rumely can thus be applied to
conclude that the $v$-adic generalized Mandelbrot sets for $\bfa(\l)$
and $\bfb(\l)$  are the same for each place $v$ (see Section~\ref{future}
for a discussion on more general situations). 
Finally, we need to use an explicit formula for the Green's function
associated to the $v$-adic generalized Mandelbrot set corresponding to
an archimedean valuation $v$ to conclude that the desired equality of
$f_\l^k(\bfa(\l))$ and $f_\l^\ell(\bfb(\l))$ holds. The main technical
difficulties in our proof compared to the proof in \cite{Matt-Laura} are due to the fact that the starting points
$\bfa$ and $\bfb$ vary with the parameter $\l$; also, the analysis of the $v$-adic
generalized Mandelbrot sets and of their properties (such as
Lemma~\ref{M_a is bounded for archimedean v}) is more technical than
the corresponding discussion from \cite{Matt-Laura}.
Extra work is needed for the explicit description of the Green's 
function for a $v$-adic generalized Mandelbrot set (when $v$ is an
archimedean place)
due to the fact that in our case the polynomial $f_{\l}$ has arbitrary
(finitely) many critical points which vary with $\l$ in contrast to
the family of polynomials $x^d+\l$ from \cite{Matt-Laura}, which has  only one critical point
for the entire family.

Assuming each $c_i$ and also $\bfa$ and $\bfb$ have algebraic coefficients, the exact same proof we have yields stronger statements of Theorem~\ref{main result} and Corollaries~\ref{Bogomolov 0} and \ref{first corollary} allowing us to replace the hypothesis that there are infinitely many $\l\in\Qbar$ such that both $\bfa(\l)$ and $\bfb(\l)$ are preperiodic for $f_{\l}$ with the weaker condition that there exists an infinite sequence of $\l_n\in\Qbar$ such that 
$$\lim_{n\to\infty} \hhat_{f_{\l_n}}(\bfa(\l_n))+\hhat_{f_{\l_n}}(\bfb(\l_n))=0,$$
where for each $\l\in\Qbar$, $\hhat_{f_{\l}}$ is the canonical height constructed with respect to the polynomial $f_{\l}$ (for the precise definition of the canonical height with respect to a polynomial map, see Section~\ref{notation}). Therefore we can prove a special case of Zhang's Dynamical Bogomolov Conjecture (see \cite{Zhang-lec}).
\begin{cor}
\label{small points corollary}
Let $Y\subset \bP^1\times \bP^1$ be a curve which admits a parameterization given by $(\bfa(z),\bfb(z))$ for $z\in\C$, where $\bfa,\bfb\in\Qbar[x]$ are polynomials of same degree and with the same leading coefficient. Let $f\in\Qbar[x]$ be a polynomial of degree at least equal to $2$, and let $\Phi(x,y):=(f(x),f(y))$ be the diagonal action of $f$ on $\bP^1\times \bP^1$. If there exist an infinite sequence of points $(x_n,y_n)\in Y(\Qbar)$ such that
$$\lim_{n\to\infty}\hhat_f(x_n) +\hhat_f(y_n)=0,$$
then $\bfa=\bfb$. In particular, $Y$ is the diagonal subvariety of
$\bP^1\times \bP^1$ and thus is preperiodic under the action of
$\Phi$.
\end{cor}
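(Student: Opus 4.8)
The plan is to transfer the hypothesis on small points of $Y$ to a hypothesis on small points of the parameter line, and then apply the height-theoretic strengthening of Corollary~\ref{Bogomolov 0} described in the paragraph just above (which rests on the same Baker--Rumely equidistribution input used to prove Theorem~\ref{main result}).

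First I would clear away degeneracies. If $\bfa$ and $\bfb$ were both constant they would be equal, since they have the same leading coefficient, and $Y$ would be a point, not a curve; hence $\deg(\bfa)=\deg(\bfb)\ge 1$, and we may also take the $(x_n,y_n)$ pairwise distinct. Because $\bfa,\bfb$ are nonconstant polynomials, the map $z\mapsto(\bfa(z),\bfb(z))$ extends to a morphism $\bP^1\to\bP^1\times\bP^1$ (with $\infty\mapsto(\infty,\infty)$) whose image is a closed irreducible curve containing $\{(\bfa(z),\bfb(z)):z\in\C\}$, hence equal to $Y$. Thus, after discarding at most the point $(\infty,\infty)$, each $(x_n,y_n)$ equals $(\bfa(z_n),\bfb(z_n))$ for some $z_n\in\C$; since $x_n\in\Qbar$ and $\bfa\in\Qbar[z]$, the number $z_n$ is a root of $\bfa(z)-x_n\in\Qbar[z]$, so $z_n\in\Qbar$. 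As $z_n$ determines $(x_n,y_n)$, distinct pairs yield distinct $z_n$, so $\{z_n\}\subset\Qbar$ is infinite.

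Next, from $x_n=\bfa(z_n)$ and $y_n=\bfb(z_n)$ we get $\hhat_f(x_n)+\hhat_f(y_n)=\hhat_f(\bfa(z_n))+\hhat_f(\bfb(z_n))$, so the hypothesis becomes
$$\lim_{n\to\infty}\hhat_f(\bfa(z_n))+\hhat_f(\bfb(z_n))=0 .$$
I would then invoke the strengthened form of Corollary~\ref{Bogomolov 0}: $f\in\Qbar[x]$ has degree $\ge 2$, the polynomials $\bfa,\bfb\in\Qbar[z]$ have the same degree and the same leading coefficient, and there is an infinite sequence $z_n\in\Qbar$ along which $\hhat_f(\bfa(z_n))+\hhat_f(\bfb(z_n))\to 0$; the strengthened conclusion is $\bfa=\bfb$. (As in the proof of Corollary~\ref{Bogomolov 0}, this reduces to Theorem~\ref{main result} after conjugating $f$ into normal form by a linear map, which preserves ``same degree and same leading coefficient'' and, by functoriality of the canonical height, preserves the limit being $0$; with $k=\ell=0$ the hypotheses (i)--(ii) of Theorem~\ref{main result} hold for the resulting constant family.) Finally, $\bfa=\bfb$ forces $Y$ to be the Zariski closure of $\{(\bfa(z),\bfa(z)):z\in\C\}$, i.e. the diagonal $\Delta$ of $\bP^1\times\bP^1$, and since $\Phi(\Delta)\subseteq\Delta$ the curve $Y$ is preperiodic (in fact fixed) under $\Phi$.

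The only step needing real care here is the descent in the second paragraph — confirming that the small points of $Y$ pull back to an \emph{infinite} set of \emph{algebraic} parameters and that the height hypothesis transfers without loss. There is no new obstacle beyond that: all of the genuine difficulty lies inside Theorem~\ref{main result} and the equidistribution argument behind its height-theoretic strengthening, which this proof uses as a black box.
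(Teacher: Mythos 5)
Your proof is correct and follows essentially the same route as the paper: descend the small points $(x_n,y_n)\in Y(\Qbar)$ to an infinite set of algebraic parameters $z_n$, transfer the limiting height condition to $\hhat_f(\bfa(z_n))+\hhat_f(\bfb(z_n))\to 0$, and invoke the height-theoretic strengthening (i.e.\ the equidistribution-based argument from Section~\ref{proof of our main result} with $S_n$ the Galois orbit of $z_n$, run for the constant family $\bff=f$). The paper takes this descent step for granted in its proof, so your explicit verification that the $z_n$ lie in $\Qbar$ and form an infinite set is a useful filling-in of detail rather than a departure from the argument.
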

\begin{remark}
In fact, this result holds not only over $\Qbar$ but over the algebraic
closure of any global function field $L$ (whose subfield of constants is $K$), as long as $f$ is not conjugate to a polynomial with coefficients in $\Kbar$.  
\end{remark}

Note that the second author, together with Baker, proved a similar
result \cite[Theorem 8.10]{Baker-Hsia} in the case $Y$ is a line;
i.e., if a line in $\bP^1\times \bP^1$ contains an infinite set of
points of small canonical height with respect to the coordinatewise
action of the polynomial $f$ on $\bP^1\times\bP^1$, then the line $Y$
is preperiodic under the action of $(f,f)$ on $\bP^1\times\bP^1$.

The plan of our paper is as follows. In Section~\ref{notation} we set
up our notation, while in Section~\ref{spaces} we give a brief
overview of Berkovich spaces. Then, in Section~\ref{preliminaries} we
introduce some basic preliminaries regarding the iterates of a generic
point $\bfc$ under $\bff$.  Section~\ref{Berkovich} contains
computations of the capacities of the generalized $v$-adic Mandelbrot
sets associated to a generic point $\bfc$ under the action of
$\bff$. In Section~\ref{complex analysis} we prove an explicit formula
for the Green's function for the generalized $v$-adic Mandelbrot sets
when $v$ is an archimedean valuation. We proceed with our proof of the
direct implication in Theorem~\ref{main result} in Section~\ref{proof
  of our main result} (for the case $f_{\l}\in\Qbar[x]$ and
$\bfa,\bfb\in\Qbar[x]$) and in Section~\ref{transcendental} (for the
general case). In Section~\ref{converse} we prove the converse
implication from Theorem~\ref{main result}.  Then, in
Section~\ref{smc} we prove Corollary~\ref{small points corollary} and
discuss the connections between our Question~\ref{main conjecture} and
the Dynamical Manin-Mumford Conjecture formulated by Ghioca, Tucker,
and Zhang in \cite{IMRN}.  Finally, we conclude our paper with a
discussion of possible extensions of Question~\ref{main conjecture} to
arbitrary dimensions in Section~\ref{future}.

\begin{acknowledgments} 
  The second author acknowledges the support from 2010-2011
  France-Taiwan Orchid Program which enabled him to attend the Summer
  School on Berkovich spaces held at Institut de Math\'{e}matiques de
  Jussieu where this project was initiated in the summer of 2010. The
  authors thank Matthew Baker and Joseph Silverman for several useful
  conversations.
\end{acknowledgments}

\section{Notation and preliminary}
\label{notation}

For any quasiprojective variety $X$ endowed with an endomorphism
$\Phi$, we call a point $x\in X$ preperiodic if there exist two
distinct nonnegative integers $m$ and $n$ such that
$\Phi^m(x)=\Phi^n(x)$, where by $\Phi^i$ we always denote the
$i$-iterate of the endomorphism $\Phi$. If $n=0$ then, by convention, $\Phi^0$ is the identity map.

Let $K$ be a field of characteristic $0$ equipped with a set of inequivalent
absolute values (places) $\Omega_K$, normalized so that the product
formula holds;  more precisely, for each $v\in\Omega_K$ there exists a
positive integer $N_v$ such that for all $\alpha\in K^{\ast}$ we have
$\prod_{v\in \Omega}\,|\alpha|_v^{N_v} = 1$ where for $v\in \Omega_K$, the
corresponding absolute value is denoted by $|\cdot |_v$.  
Let $\C_v$ be a fixed completion of the algebraic closure of a completion of $(K,|\cdot |_v)$. When $v$ is an archimedean valuation, then $\C_v=\C$. 
We fix an extension of $|\cdot |_v$ to an absolute value of
$(\C_v,|\cdot |_v)$. Examples of
product formula fields (or \emph{global fields}) are number fields and function fields of
projective varieties which are regular in codimension 1 (see
\cite[\S~2.3]{lang} or \cite[\S~1.4.6]{bg06}). 

Let $f\in \C_v[x]$ be any polynomial of degree
$d\ge 2$.  Following Call and Silverman \cite{Call-Silverman},
 for each  $x\in \C_v$, we define the
 \emph{local canonical height} of $x$ as follows   
$$\hhat_{f,v}(x):=\lim_{n\to\infty}
\frac{\log^+|f^n(x)|_v}{d^n},$$
where by $\log^+z$ we always denote $\log\max\{z,1\}$ (for any real number $z$). 

It is immediate that $\hhat_{f,v}(f^i(x))=d^i \hhat_{f,v}(x)$, and
thus $\hhat_{f,v}(x)=0$ whenever $x$ is a preperiodic point for $f$.
If $v$ is nonarchimedean and $f(x)=\sum_{i=0}^d a_i x^i$, then
$|f(x)|_v=|a_d x^d|_v>|x|_v$ when $|x|_v>r_v$, where
\begin{equation}
\label{definition N_v}
r_v:=\max\left\{|a_d|_v^{-\frac{1}{d-1}},\max\left\{
    \left|\frac{a_i}{a_d}\right|^{\frac{1}{d-i}}\right\}_{0\le
    i<d}\right\}.
\end{equation}
Moreover, if $|x|_v>r_v$, then
$\hhat_v(x)=\log|x|_v+\frac{\log|a_d|_v}{d-1}>0$. 
For more details see \cite{equi} 
and \cite{L-C} (although the results from \cite{equi, L-C} are for
canonical heights associated to Drinfeld modules, all the proofs go
through for any local canonical height associated to any polynomial
with respect to any nonarchimedean place). 

Now, if $v$ is archimedean, again it is easy to see that if $|x|_v$ is
sufficiently large
then $|f(x)|_v\gg |x|^d_v$ and
moreover, $|f^n(x)|_v\to\infty$ as $n\to\infty$. 

We fix an algebraic closure $\Kbar$ of $K$, and for each $v\in\Omega_K$ we fix an embedding $\Kbar\into\C_v$. Assume $f\in\Kbar[x]$. 
In \cite{Call-Silverman}, Call and Silverman also defined the \emph{global canonical height} $\hhat(x)$ for each $x\in\Kbar$ as
$$\hhat_f(x)=\lim_{n\to\infty} \frac{h(f^n(x))}{d^n},$$
where $h$ is the usual (logarithmic) Weil height on $\Kbar$.  Call and Silverman show that the global canonical height decomposes into a sum of the corresponding local canonical heights.

For each $\sigma\in\Gal(\Kbar/K)$, we denote by $\hhat_{f^{\sigma}}$ the global canonical height computed with respect to $f^{\sigma}$, which is the polynomial obtained by applying $\sigma$ to each coefficient of $f$. Similarly, for each $v\in\Omega_K$ we denote by $\hhat_{f^{\sigma},v}$ the corresponding local canonical height constructed with respect to the polynomial $f^{\sigma}$. For $x\in \overline{K}$, we have $\hhat_f(x)=0$ if and only if $\hhat_{f^{\sigma}}\left(x^{\sigma}\right)=0$ for all $\sigma\in\Gal(\Kbar/K)$. More precisely,  for $x\in \Kbar$ we have 
\begin{equation}
\label{connection between local and global canonical heights}
\hhat_f(x)=0\text{ if and only if }\hhat_{f^{\sigma},v}\left(x^{\sigma}\right)=0\text{ for all $v\in \Omega_K$ and all $\sigma\in{\rm Gal}(\Kbar/K)$.}
\end{equation}
Essentially, \eqref{connection between local and global canonical heights} says that $\hhat_f(x)=0$ if and only if the orbits of $x^{\sigma}$ under each polynomial $f^{\sigma}$ (for $\sigma\in\Gal(\Kbar/K)$) are bounded with respect to each absolute value $|\cdot |_v$ for $v\in\Omega_K$.

In \cite{Rob}, Benedetto proved that if a polynomial $f$ defined over
a function field $K$  (endowed with a set $\Omega_K$ of absolute
values) is not \emph{isotrivial} (that is, it cannot be conjugated to a
polynomial defined over the constant subfield of $K$) then each point
$c\in \Kbar$ is preperiodic for $f$ if and only if its global
canonical height (computed with respect to $f$) equals $0$. In particular, if 
$c\in \Kbar$, then $c$ is preperiodic if and only if 
\begin{equation}
\label{isotrivial thing}
\hhat_{f^{\sigma},v}\left(c^{\sigma}\right)=0\text{ for all $\sigma\in\Gal(\Kbar/K)$ and for all places $v\in\Omega_K$.} 
\end{equation}

Let $\bff=f_{\l}:=x^d+\sum_{i=0}^{d-2} c_i(\l)x^i$, where $c_i(\l)\in\C[\l]$  for $i=0,\dots,d-2$, and let $\bfc(\l)\in\C[\l]$. We let $K$ be the field extension of $\Q$ generated by all coefficients of each $c_i(\l)$ and of $\bfc(\l)$. Assume $K$ is a global field, i.e. it has a set $\Omega_K$ of inequivalent absolute values with respect to which the nonzero elements of $K$ satisfy a product formula. 
For each place $v\in\Omega_K$ we define the $v$-adic Mandelbrot set $M_{\bfc,v}$ for $\bfc$ with respect to the family of polynomials $\bff$ as the set of all $\l\in\C_v$ such that $\hhat_{f_{\l}, v}(\bfc(\l))=0$, i.e. the set of all $\l\in\C_v$ such that the iterates $f_{\l}^n(\bfc(\l))$ are bounded with respect to the $v$-adic absolute value.

\section{Berkovich spaces}
\label{spaces}


In this section we introduce the Berkovich spaces, and state the equidistribution theorem of Baker and Rumely \cite{Baker-Rumely} which will be key for the proofs of Theorems~\ref{main result}~and~\ref{extension to C}.

Let $K$ be a global field of characteristic $0$, and let $\Omega_K$ be the set of its inequivalent absolute values. For each $v\in\Omega_K$, we let $\C_v$ be the completion of an algebraic closure of the completion of $K$ at $v$.  
Let $\Aberk{\C_v}$ denote the Berkovich affine  line over
$\C_v$ (see \cite{berkovich} or  \cite[\S~2.1]{Baker-Rumely} for
details). Then $\Aberk{\C_v}$ is a 
locally compact, Hausdorff, path-connected space containing $\C_v$ as a
dense subspace (with the topology induced from the $v$-adic absolute
value). As a topological space, $\Aberk{\C_v}$ is the set consisting of
all multiplicative seminorms, denoted by $[\cdot]_x,$ on $\C_v[T]$ extending
the absolute value $|\cdot|_v$ on $\C_v$ endowed with the weakest
topology such that the map $z\mapsto [f]_z$ is continuous for all
$f\in \C_v[T]$. It follows from the Gelfand-Mazur theorem that if $\C_v$ is the field of complex numbers
 $\C$ then $\Aberk{\C}$ is homeomorphic to $\C$. In the following, we
will also use $\Aberk{\C_v}$ to denote the complex line $\C$ whenever $\C_v = \C$. If $(\C_v,|\cdot |_v)$ is nonarchimedean then 
the set of seminorms can be described as follows. If
$\{D(a_i,r_i)\}_i$ is any decreasing nested sequence of closed disks
$D(c_i,r_i)$ centered at points $c_i\in \C_v$ of radius $r_i\ge 0$, then
the map $f\mapsto \lim_{i\to\infty}\,[f]_{D(c_i,r_i)}$ defines a
multiplicative seminorm on $\C_v[T]$ where $[f]_{D(c_i,r_i)}$ is the
sup-norm of $f$ over the closed disk 
$D(a_i, r_i).$ Berkovich's classification theorem says that  
there are exactly four types of points, Type I, II, III and IV. 
The first three types of points can be described in terms of closed
disks $\zeta = D(c,r) = \cap D(c_i,r_i)$ where $c\in \C_v$ and $r\ge 0.$
The corresponding 
multiplicative seminorm is just $f \mapsto [f]_{D(c,r)}$ for $f\in
\C_v[T].$ Then, $\zeta$ is of Type I, II or III  if and only if $r= 0,
r\in |\C_v^{\ast}|_v$ or $r\not\in |\C_v^{\ast}|_v$ respectively. As for
Type IV points, they correspond to sequences of decreasing nested
disks $D(c_i,r_i)$ such that $\cap D(c_i,r_i) = \emptyset$ and the
multiplicative seminorm is $f\mapsto \lim_{i\to \infty}
[f]_{D(c_i,r_i)}$ as described above.  For details, see
\cite{berkovich} or \cite{Baker-Rumely}. 
For $\zeta\in \Aberk{\C_v},$ we sometimes write $|\zeta|_v$
instead of $[T]_{\zeta}.$ 

In order to apply the main equidistribution result from \cite[Theorem 7.52]{Baker-Rumely}, we recall the potential theory on the
affine line over $\C_v$. We will focus on the case $\C_v$ is a nonarchimedean
field; the case $\C_v=\C$ is classical (we refer the reader to \cite{Ransford}).  The right setting for nonarchimedean potential theory is the
potential theory on $\Aberk{\C_v}$ developed in \cite{Baker-Rumely}. We
quote part of a nice summary of the theory 
from \cite[\S~2.2~and~2.3]{Matt-Laura} without going into details. We
refer the reader to \cite{Matt-Laura, Baker-Rumely}  for all the
details and proofs. Let $E$ be a compact subset of
$\Aberk{\C_v}.$ Then analogous to the complex case, the logarithmic
capacity $\g(E) = e^{-V(E)}$ and the Green's function $G_E$ of $E$
relative to $\infty$ can be defined where $V(E)$ is the infimum of the 
\emph{energy integral} with respect to all possible probability
measures supported on $E$. More precisely,
$$V(E)=\inf_{\mu}\int\int_{E\times E} -\log\delta(x,y) d\mu(x)d\mu(y),$$
where the infimum is computed with respect to all probability measures $\mu$ supported on $E$, while $\delta(x,y)$ is the \emph{Hsia kernel} (see \cite{Baker-Rumely}):
$$\delta(x,y):=\limsup_{\substack{z,w\in\Aberk{\C_v}\\ z\to x, w\to y}}\mid z-w\mid _v .$$ 
The following are basic properties of the logarithmic 
capacity of $E$. 
\begin{itemize}
\item
  If $E_1, E_2$ are two compact subsets of $\Aberk{\C_v}$ such that
  $E_1\subset E_2$ then $\g(E_1) \le \g(E_2).$

\item
  If $E = \{\zeta\}$ where $\zeta$ is a Type II or III point
  corresponding to   a closed disk $D(c,r)$ then $\g(E) = r > 0.$ 
  \cite[Example~6.3]{Baker-Rumely}. (This can be viewed as analogue of
  the fact that a closed disk $D(c,r)$ of positive radius $r$ in $\C_v$
  has logarithmic capacity $\g(D(c,r)) = r$.)    
\end{itemize}
If $\g(E) > 0,$ then the exists a unique probability measure $\mu_E$
attaining the infimum of the energy integral. Furthermore, the support of
$\mu_E$ is contained in the boundary of the unbounded component of
$\Aberk{\C_v}\setminus E.$  

The Green's function 
$G_E(z)$ of $E$ relative to infinity is a well-defined nonnegative
real-valued subharmonic function on $\Aberk{\C_v}$ which is harmonic on $\Aberk{\C_v}\setminus E$ (in the sense of
\cite[Chapter~8]{Baker-Rumely} for the nonarchimedean setting; see \cite{Ransford} for the archimedean case). 
If $\g(E)=0$, then there exists no Green's function associated to the set $E$ (see \cite[Exercise 1, page 115]{Ransford} in the case $|\cdot |_v$ is archimedean; a similar argument works in the case $|\cdot |_v$ is nonarchimedean). Indeed, as shown in \cite[Proposition 7.17, page 151]{Baker-Rumely}, if $\gamma(\partial E)=0$ then  there exists no nonconstant harmonic function on $\Aberk{\C_v}\setminus E$ which is bounded below (this is the Strong Maximum Principle for harmonic functions defined on Berkovich spaces). 
The following result is \cite[Lemma~2.2~and~2.5]{Matt-Laura}, and it gives a characterization of
the Green's function of the set $E$.  

\begin{lemma}
  \label{green function}
Let $(\C_v,|\cdot |_v)$ be either an archimedean or a nonarchimedean field. 
Let $E$ be a compact subset of $\Aberk{\C_v}$ and let $U$ be the unbounded component of
$\Aberk{\C_v}\setminus E$. 

\begin{itemize}
\item[(1)]
If $\g(E)>0$ (i.e. $V(E)<\infty$), then
  $G_E(z) = V(E) + \log |z|_v + o(1)$ for all $z\in \Aberk{\C_v}$ such that
  $|z|_v$ is sufficiently large. Furthermore, the $o(1)$-term may be omitted if $v$ is
  nonarchimedean. 

\item[(2)]
If $G_E(z) = 0$ for all $z\in E,$ then $G_E$ is continuous on
  $\Aberk{\C_v},$ $\supp(\mu_E) = \partial U$ and $G_E(z) > 0$ if and
  only if $z\in U.$  

\item[(3)]
  If $G :\Aberk{\C_v}\to \R$ is a continuous subharmonic function which
  is harmonic on $U,$ identically zero on $E,$ and such that $G(z) -
  \log^+|z|_v$ is bounded, then $G = G_E$. Furthermore, if $G(z)=\log|z|_v + V + o(1)$ (as $|z|_v\to\infty$) for some $V<\infty$, then $V(E)=V$ and so, $\g(E)=e^{-V}$.

\end{itemize}

\end{lemma}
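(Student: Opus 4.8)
The plan is to derive all three assertions from standard potential theory, which is classical over $\C$ (see \cite{Ransford}) and was developed over a nonarchimedean $\C_v$ in \cite{Baker-Rumely}; I would run the archimedean and nonarchimedean arguments in parallel and flag the single point where the ultrametric inequality sharpens the conclusion.

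For part (1), I would use that $\g(E)=e^{-V(E)}>0$ produces a unique equilibrium measure $\mu_E$ — a probability measure with compact support contained in $\partial U$ — and that $G_E$ differs from its potential $p_{\mu_E}(z)=\int\log\frac{1}{|z-w|_v}\,d\mu_E(w)$ by the constant $V(E)$, i.e. $G_E=V(E)-p_{\mu_E}$. As $|z|_v\to\infty$, dominated convergence gives $p_{\mu_E}(z)=-\log|z|_v+o(1)$ since $\mu_E$ is a compactly supported probability measure, so $G_E(z)=V(E)+\log|z|_v+o(1)$. When $v$ is nonarchimedean, once $|z|_v>\sup\{|w|_v:w\in\supp\mu_E\}$ the ultrametric inequality forces $|z-w|_v=|z|_v$ for every $w\in\supp\mu_E$, so $p_{\mu_E}(z)=-\log|z|_v$ exactly and the error term disappears.

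For part (2), I would first note that the hypothesis $G_E\equiv0$ on $E$ says precisely that $E$ is non-thin at each of its points, so the Dirichlet problem on $U$ is continuously solvable and $G_E$ — harmonic off $E$ and zero on $E$ — is continuous on all of $\Aberk{\C_v}$. On $U$ it is harmonic, nonnegative and nonconstant (it has a logarithmic pole at $\infty$ by part (1)), hence $G_E>0$ there by the minimum principle; on each bounded component $W$ of $\Aberk{\C_v}\setminus E$ it is harmonic, nonnegative, bounded, and zero on $\partial W\subseteq E$, hence $G_E\equiv0$ on $W$; together with $G_E\equiv0$ on $E$ this gives $G_E(z)>0\iff z\in U$. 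The inclusion $\supp(\mu_E)\subseteq\partial U$ has already been recorded above; for the reverse, a point $x\in\partial U$ lying outside $\supp(\mu_E)$ would have $G_E$ harmonic in a neighborhood with interior minimum value $0$ at $x$, forcing $G_E\equiv0$ near $x$ and contradicting $G_E>0$ on the part of that neighborhood meeting $U$.

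For part (3), I would first check $\g(E)>0$: otherwise $\g(\partial E)=0$ and the Strong Maximum Principle (\cite[Proposition~7.17]{Baker-Rumely} in the nonarchimedean case, and its classical archimedean counterpart from \cite{Ransford}) rules out a nonconstant function harmonic and bounded below on $\Aberk{\C_v}\setminus E$, while $G|_U$ is exactly such a function, being harmonic, $\ge0$, and nonconstant since $G-\log^+|z|_v$ bounded forces $G(z)\to\infty$; thus $G_E$ exists. On $U$ both $G$ and $G_E$ are harmonic, $G-G_E$ is bounded on $U$ and extends harmonically across $\infty$ (as $G(z)=\log|z|_v+O(1)$ and $G_E(z)=\log|z|_v+V(E)+o(1)$ by part (1)), and along $\partial U\subseteq E$ we have $G=0$ and $G_E=0$ quasi-everywhere, so the boundary data of $G-G_E$ vanish quasi-everywhere on $\partial U$; since polar sets are negligible for bounded harmonic functions, the maximum principle yields $G=G_E$ on $U$. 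Off $U$ I would use that $G$ is subharmonic on all of $\Aberk{\C_v}$ and vanishes on $E$: the sub-mean-value inequality at points of $\partial U$ combined with the maximum principle pins $G$ to $0$ on every bounded component of $\Aberk{\C_v}\setminus E$, and $G_E$ likewise vanishes there (a bounded harmonic function with quasi-everywhere-zero boundary data), so $G=G_E$ everywhere; the last clause then follows by comparing $G(z)=\log|z|_v+V+o(1)$ with the expansion from part (1), which forces $V=V(E)$ and hence $\g(E)=e^{-V(E)}=e^{-V}$. The hard part is executing part (3) in the nonarchimedean case, where every step has to be carried out inside the Berkovich potential theory of \cite{Baker-Rumely} — subharmonic functions and the Laplacian, Harnack and (strong) maximum principles, the negligibility of polar exceptional sets — rather than by transplanting intuition from the complex plane; parts (1) and (2) are comparatively routine once the \cite{Ransford}/\cite{Baker-Rumely} dictionary is in place.
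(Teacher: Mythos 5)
The paper does not prove this lemma: it is quoted verbatim as \cite[Lemma~2.2~and~2.5]{Matt-Laura}, so there is no in-paper argument to compare yours against. Your proofs of (1) and (2) are sound and are the standard arguments: (1) from $G_E = V(E) - p_{\mu_E}$ plus the asymptotics of the potential of a compactly supported probability measure, with the ultrametric inequality killing the $o(1)$ term; (2) from the continuous solvability of the Dirichlet problem on $U$ once $E$ is non-thin, plus the strong maximum principle on $U$, the bounded components, and around points of $\partial U$.

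Your proof of (3), however, has a genuine gap in the step that handles bounded components of $\Aberk{\C_v}\setminus E$. You assert that the sub-mean-value inequality at points of $\partial U$ together with the maximum principle forces $G\equiv 0$ on every bounded component $W$. The maximum principle for subharmonic functions does give $G\le 0$ on $W$ (since $G=0$ on $\partial W\subset E$), but nothing in the stated hypotheses forces $G\ge 0$ there, and the sub-mean-value inequality at a point of $\partial W\cap\partial U$ cannot: the positive contribution from $U$ can absorb a genuinely negative contribution from $W$. Concretely, over $\C$ take $E=\{|z|=1\}$, so $U=\{|z|>1\}$, $W=\{|z|<1\}$, and $G_E=\log^+|z|$. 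The function $G(z)=\tfrac14(|z|^2-1)$ for $|z|\le 1$ and $G(z)=\log|z|$ for $|z|\ge 1$ is continuous, globally subharmonic (the distributional Laplacian is $\mathbf{1}_{|z|<1}\,dA$ plus a \emph{positive} multiple of arclength on $\{|z|=1\}$, since the outward normal derivative jumps from $\tfrac12$ to $1$), harmonic on $U$, identically zero on $E$, and $G-\log^+|z|$ is bounded; yet $G(0)=-\tfrac14\ne 0=G_E(0)$. So the uniqueness assertion fails as literally stated when $\Aberk{\C_v}\setminus E$ has bounded components, and your argument is attempting to prove something false. The clean fix is to add one of the following hypotheses: $G$ is harmonic on all of $\Aberk{\C_v}\setminus E$ (then $G$ is a bounded harmonic function on each $W$ vanishing on $\partial W$, hence zero), or $G\ge 0$, or $E$ is full (no bounded components). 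In the present paper only the last situation ever occurs — the $v$-adic Mandelbrot sets $M_{\bfc,v}$ are full by the Maximum Modulus Principle, as noted in Section~\ref{transcendental} — so the paper's use of the lemma is unaffected, and your argument on $U$ (the part actually needed) is correct; but as a blind proof of the lemma as stated it does not close, and it is worth recording which extra hypothesis you are implicitly using.
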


To state the equidistribution result from \cite{Baker-Rumely}, we
consider the compact \emph{Berkovich adelic sets} which are of the
following form
$$
\E := \prod_{v\in \Omega}\, E_v
$$
where $E_v$ is a non-empty compact subset of $\Aberk{\C_v}$ for each
$v\in \Omega$ and where $E_v$ is the closed unit disk $\cD(0,1)$ in
$\Aberk{\C_v}$ for all but finitely many $v\in \Omega.$ The
\emph{logarithmic capacity} $\g(\E)$ of $\E$ is defined as follows
$$\g(\E) = \prod_{v\in \Omega}\,\g(E_v)^{N_v}, $$
where the positive integers $N_v$ are the ones associated to the product formula on the global field $K$. 
Note that this is a finite product as for all but finitely many 
$v\in\Omega,$ $\g(E_v) = \g(\cD(0,1)) = 1.$   Let $G_v = G_{E_v}$  be
the Green's function of $E_v$ 
relative to $\infty$ for each $v\in \Omega.$ For every $v\in\Omega,$ we
fix an embedding $\Kbar\into\C_v.$ Let 
$S\subset \Kbar$ be any finite subset that is invariant under the action
of the Galois group 
$\gal(\Kbar/K)$. We define the height $h_{\E}(S)$ of $S$ relative to $\E$
by
\begin{equation}
\label{def ade hig}
h_{\E}(S) = \sum_{v\in\Omega} N_v\left(\frac{1}{|S|}\sum_{z\in S}G_v(z)\right).
\end{equation}
Note that this definition is independent of any particular embedding
$\Kbar\into\C_v$ that we choose at $v\in \Omega.$
The following is a special case of the equidistribution result 
\cite[Theorem~7.52]{Baker-Rumely} that we need for our application.
\begin{thm}
 \label{thm:equidistribution}
Let $\E = \prod_{v\in \Omega} E_v$ be a compact Berkovich adelic set
 with $\g(\E)=1.$ Suppose that 
 $S_n$ is a sequence of $\gal(\Kbar/K)$-invariant finite subsets of
 $\Kbar$ with $|S_n|\to \infty$ and $h_{\E}(S_n) \to 0$ as $n\to
 \infty.$ 
For each $v\in\Omega$ and for each $n$ let $\d_n$ be the discrete
 probability measure supported equally on the elements of $S_n.$ Then the
 sequence of measures $\{\d_n\}$ converges weakly to $\mu_v$ the
 equilibrium measure on $E_v.$
\end{thm}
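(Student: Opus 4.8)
Theorem~\ref{thm:equidistribution} is presented as a special case of the Baker--Rumely adelic equidistribution theorem \cite[Theorem~7.52]{Baker-Rumely}, so the plan is not to reprove that theorem but to verify that the hypotheses imposed here place us inside its scope and then quote it; the only genuine work is in reconciling normalizations. The first point to record is that the capacity hypothesis $\g(\E)=\prod_{v\in\Omega}\g(E_v)^{N_v}=1$ forces $\g(E_v)>0$ for \emph{every} $v\in\Omega$: each $\g(E_v)$ is a nonnegative real number, finite because $E_v$ is compact, and a single vanishing factor would make the whole product vanish, contrary to $\g(\E)=1$. Hence, by the potential theory summarized before Lemma~\ref{green function}, each $E_v$ admits a Green's function $G_v=G_{E_v}$ relative to $\infty$ and a unique equilibrium measure $\mu_v$, supported on the boundary of the unbounded component $U_v$ of $\Aberk{\C_v}\setminus E_v$; and for all but finitely many $v$ we have $E_v=\cD(0,1)$, $\g(E_v)=1$, $G_v=\log^+|z|_v$, so that the product defining $\g(\E)$ is a finite product and the sum in \eqref{def ade hig} is a finite sum.

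Next I would match the height functions. The quantity $h_{\E}(S)$ attached to a $\gal(\Kbar/K)$-stable finite set $S\subset\Kbar$ in \cite{Baker-Rumely} is, by definition, the place-by-place average of the Green's functions weighted by the product-formula constants $N_v$, which is exactly \eqref{def ade hig}; its independence of the auxiliary embeddings $\Kbar\into\C_v$ follows from the Galois-equivariance of each $G_v$ together with the Galois-stability of $S$. The normalization ``$\E$ has capacity one'' under which \cite[Theorem~7.52]{Baker-Rumely} is stated is precisely $\g(\E)=1$, and its conclusion is verbatim what we need: if $S_n\subset\Kbar$ are Galois-stable finite sets with $|S_n|\to\infty$ and $h_{\E}(S_n)\to 0$, then for each $v\in\Omega$ the uniform probability measures $\d_n$ on $S_n$ --- transported to $\Aberk{\C_v}$ via the fixed embedding $\Kbar\into\C_v$ --- converge weakly to $\mu_v$. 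This gives Theorem~\ref{thm:equidistribution}.

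The one place where I expect to need care --- and what passes for the ``main obstacle'' --- is the regularity of the sets $E_v$: a priori $G_{E_v}$ need not be continuous on $\Aberk{\C_v}$, and $\supp\mu_v$ need not be all of $\partial U_v$. This turns out not to be a real difficulty, since \cite[Theorem~7.52]{Baker-Rumely} is formulated for arbitrary compact $E_v$ of positive capacity, the limit measure being the equilibrium measure $\mu_v$ itself, so that no extra regularity is demanded. Moreover, in every application we will make of Theorem~\ref{thm:equidistribution} the relevant $E_v$ are the $v$-adic generalized Mandelbrot sets $M_{\bfa,v}$ and $M_{\bfb,v}$, which will be shown in Sections~\ref{Berkovich}~and~\ref{complex analysis} to be compact with $\g(E_v)>0$ and with $G_v\equiv 0$ on $E_v$; for such $E_v$, Lemma~\ref{green function}(2) already yields that $G_v$ is continuous and $\supp\mu_v=\partial U_v$, which is the cleanest possible setting. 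So the ``hard part'' here is organizational rather than mathematical: one must keep the $N_v$-weights, the capacity-one scaling, and the choice of embeddings $\Kbar\into\C_v$ consistent with the conventions used throughout the paper.
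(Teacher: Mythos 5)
Your proposal matches the paper's treatment exactly: the paper presents Theorem~\ref{thm:equidistribution} as a direct specialization of \cite[Theorem~7.52]{Baker-Rumely} and offers no independent proof, which is precisely what you do, and the extra bookkeeping you supply (positivity of each $\g(E_v)$, finiteness of the product and sum, Galois-equivariance making $h_{\E}$ well-defined) is the kind of sanity-check the paper leaves implicit. Nothing to add.
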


\section{General results about the dynamics of  polynomials $f_{\lambda}$}  
\label{preliminaries}

In this Section we work with a family of polynomials $f_{\lambda}$
as given  in Section~\ref{notation}, i.e.
$$f_{\lambda}(x)=x^d + \sum_{i=0}^{d-2} c_i(\lambda) x^i,  $$
with $c_i(\lambda) \in \C[\lambda]$ for $i = 0, \ldots, d-2$. As before, we may rewrite our family of polynomials as
$$f_{\l}(x)=P(x)+\sum_{j=1}^r Q_j(x)\cdot \l^{m_j},$$
where $P(x)$ is a polynomial of degree $d$ in normal form, each $Q_i$ has degree at most equal to $d-2$, while $r$ is a nonnegative integer and $m_0:=0<m_1\cdots < m_r$. 
Let
$\bfc(\lambda) \in \C[\lambda]$ be given, and let $K$ be the field extension of $\Q$ generated by all
the coefficients of $c_i(\lambda), i = 0,\ldots,d-2$ and of $\bfc(\lambda).$  
We define $g_{\bfc,n}(\lambda):= f_{\lambda}^n(\bfc(\lambda))$ for each
$n\in\N$. Assume $m := \deg(\bfc)$ satisfies the property (ii) from Theorem~\ref{main result}, i.e.,
\begin{equation}
\label{what we need for degree}
m=\deg(\bfc)\ge m_r.
\end{equation} 
Furthermore, if $r=0$ we assume $m\ge 1$ (see also Remark~\ref{first important remark} (c)). 
We let $q_m$ be the leading coefficient of $\bfc(\l)$. 
In the next Lemma we compute the degrees of all polynomials $g_{\bfc,n}$ for all positive integers $n$.

\begin{lemma}
\label{lem:degree in l}
With the above hypothesis, the polynomial $g_{\bfc,n}(\lambda)$ has degree $m\cdot d^{n}$ and leading coefficient $q_m^{d^{n}}$ for each $n\in\N$.
\end{lemma}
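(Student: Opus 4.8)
The plan is to argue by induction on $n$. For the base case $n=0$ we have $g_{\bfc,0}=\bfc$, which by hypothesis has degree $m=m\cdot d^0$ and leading coefficient $q_m=q_m^{d^0}$, so there is nothing to prove. Note that throughout we have $m\ge 1$: when $r\ge 1$ the integers $0=m_0<m_1<\cdots<m_r$ force $m_r\ge 1$, whence $m\ge m_r\ge 1$ by \eqref{what we need for degree}, and when $r=0$ this is part of our standing assumption.

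For the inductive step, assume $g_{\bfc,n}(\l)$ has degree $M:=m\,d^n$ and leading coefficient $Q:=q_m^{d^n}$, and write
$$g_{\bfc,n+1}(\l)=f_{\l}\bigl(g_{\bfc,n}(\l)\bigr)=P\bigl(g_{\bfc,n}(\l)\bigr)+\sum_{j=1}^r Q_j\bigl(g_{\bfc,n}(\l)\bigr)\cdot\l^{m_j}.$$
Since $P$ is monic of degree $d$, substituting the polynomial $g_{\bfc,n}(\l)$ of degree $M$ and leading coefficient $Q$ gives that $P(g_{\bfc,n}(\l))$ has degree $dM=m\,d^{n+1}$ and leading coefficient $Q^d=q_m^{d^{n+1}}$. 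The remaining step is to check that every other summand $Q_j(g_{\bfc,n}(\l))\cdot\l^{m_j}$ has $\l$-degree strictly less than $dM$, so that it contributes neither to the degree nor to the leading coefficient of $g_{\bfc,n+1}$. Because $\deg Q_j\le d-2$, such a summand has degree at most $(d-2)M+m_j\le (d-2)M+m_r$; and since $M=m\,d^n\ge m\ge m_r$ together with $M\ge 1$, we obtain $(d-2)M+m_r\le (d-2)M+M=(d-1)M<dM$, which is what we wanted. This closes the induction.

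The entire argument is elementary bookkeeping, and I do not expect any real obstacle. The one place where the hypotheses are genuinely used is the final inequality: the assumption $m\ge m_r$ in \eqref{what we need for degree} is precisely what keeps the perturbation terms $Q_j(g_{\bfc,n})\l^{m_j}$ below the dominant term $P(g_{\bfc,n})$ for \emph{every} $n\ge 0$ (if instead $m<m_r$, a perturbation term could already dominate at $n=0$, and the clean formula would break). The role of this lemma is simply to pin down the top-degree behaviour of the iterates $f_{\l}^n(\bfc(\l))$, which is what will later feed into the computation of capacities and Green's functions of the generalized $v$-adic Mandelbrot sets.
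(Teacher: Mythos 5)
Your proof is correct and follows the same route as the paper's (which simply remarks that the claim follows by induction using \eqref{what we need for degree}, since the dominant term in $g_{\bfc,n}(\lambda)$ is $\bfc(\lambda)^{d^n}$). You have just spelled out the bookkeeping that the paper leaves implicit, including the key inequality $(d-2)M+m_r<(d-1)M+m\le dM$ that needs $m\ge m_r$ and $M\ge 1$.
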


\begin{proof}
The assertion  follows
easily by induction on $n$, using \eqref{what we need for degree}, since the term of highest degree in $\l$ from $g_{\bfc,n}(\l)$ is  $\bfc(\l)^{d^n}$.  
\end{proof}

We immediately obtain as a corollary of Lemma~\ref{lem:degree in l} the fact that $\bfc$ is not preperiodic for $\bff$. 
We denote by $\Prep(\bfc)$ the set of all $\l\in\C$ such that $\bfc(\l)$ is preperiodic for $f_{\l}$. The following result is an immediate consequence of Lemma~\ref{lem:degree in l}.
\begin{cor}
\label{always over the complex}
 $\Prep(\bfc)\subset \Kbar.$
\end{cor}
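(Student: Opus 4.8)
The plan is to deduce Corollary~\ref{always over the complex} directly from Lemma~\ref{lem:degree in l}. Recall that $\Prep(\bfc)$ is the set of $\l\in\C$ for which $\bfc(\l)$ is preperiodic under $f_\l$. The key observation is that if $\bfc(\l)$ is preperiodic for $f_\l$, then there exist distinct nonnegative integers $n_1<n_2$ with $f_\l^{n_1}(\bfc(\l))=f_\l^{n_2}(\bfc(\l))$, i.e., $g_{\bfc,n_1}(\l)=g_{\bfc,n_2}(\l)$, so $\l$ is a root of the nonzero polynomial $g_{\bfc,n_2}(X)-g_{\bfc,n_1}(X)\in\C[X]$; here nonzero is guaranteed because by Lemma~\ref{lem:degree in l} the polynomials $g_{\bfc,n_1}$ and $g_{\bfc,n_2}$ have different degrees $m\cdot d^{n_1}\ne m\cdot d^{n_2}$ (using $m\ge1$ and $d\ge2$). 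Hence $\Prep(\bfc)$ is contained in the union, over all pairs $n_1<n_2$, of the (finite) zero-sets of $g_{\bfc,n_2}-g_{\bfc,n_1}$.

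The remaining point is that each of these zero-sets consists of algebraic numbers, i.e., lies in $\Kbar$. This follows because all the coefficients of $g_{\bfc,n}(\l)=f_\l^n(\bfc(\l))$ lie in $K$: indeed $\bfc(\l)\in K[\l]$ and each $c_i(\l)\in K[\l]$ by the definition of $K$ as the field generated over $\Q$ by all these coefficients, and $K[\l]$ is closed under the polynomial operations (addition, multiplication, composition in $\l$) used to form the iterates $f_\l^n(\bfc(\l))$. Therefore $g_{\bfc,n_2}(X)-g_{\bfc,n_1}(X)\in K[X]$ is a nonzero polynomial with coefficients in $K$, so each of its roots is algebraic over $K$, hence lies in $\Kbar$. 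Taking the union over all pairs $n_1<n_2$ gives $\Prep(\bfc)\subset\Kbar$, as claimed.

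There is essentially no obstacle here; the content is entirely in Lemma~\ref{lem:degree in l}, which is why the corollary is labelled an immediate consequence. The one thing to be slightly careful about is the edge case $r=0$ together with $m=0$ (a constant family of polynomials with $\bfc$ constant), which is excluded by the standing hypothesis \eqref{what we need for degree} together with the convention that $m\ge1$ when $r=0$; under this hypothesis the degrees $m\cdot d^{n}$ are strictly increasing in $n$, so the difference polynomials are genuinely nonzero and the argument goes through.
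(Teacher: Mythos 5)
Your proof is correct and is exactly the argument the paper has in mind: the paper states the corollary as an immediate consequence of Lemma~\ref{lem:degree in l} without writing out the details, and your write-up supplies precisely those details — the degree computation forces $g_{\bfc,n_2}-g_{\bfc,n_1}$ to be a nonzero polynomial with coefficients in $K$, so its roots lie in $\Kbar$. Your attention to the edge case $r=0$, $m\ge 1$ is also consistent with the standing hypothesis \eqref{what we need for degree} in Section~\ref{preliminaries}.
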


\section{Capacities of Generalized Mandelbrot sets}
\label{Berkovich}

We continue with the notation from Sections~\ref{spaces} and
\ref{preliminaries}.    
Let $\bfc=\bfc(\lambda)\in\C[\lambda]$ be a nonconstant polynomial, and let $K$ be a product formula
field containing the 
coefficients of each $c_i(\lambda), i = 0,\ldots, d-2$ and of $\bfc$.    
We let $\Omega_K$ be the set of inequivalent absolute values of the
global field $K$, and let $v\in \Omega_K$. Assume that $\bfc(\l) =
q_m \lambda^m + \ldots \text{(lower terms)},$ where $m=\deg(\bfc)$ satisfies the condition \eqref{what we need for degree}.

Our goal is to compute
 the logarithmic capacities of the $v$-adic generalized 
Mandelbrot sets $M_{\bfc,v}$ defined in Section~\ref{notation}. 
Following \cite{Matt-Laura},  we
extend the definition of our $v$-adic Mandelbrot set $M_{\bfc,v}$ to be a
subset of the affine Berkovich line $\Aberk{\C_v}$ as follows:
\[
 M_{\bfc,v} := \{\l \in \Aberk{\C_v} : \sup_n \left[g_{\bfc,n}(T)\right]_\l < \infty\}.
\]
Note that if $\C_v$ is a nonarchimedean field, then our present definition for $M_{\bfc,v}$ yields more points than our definition from 
Section~\ref{notation}. 
Let $\l\in \C_v$ and recall the
local canonical height $\hhat_{\l,v}(x)$ of $x\in \C_v$  is given by the formula
$$
\hhat_{\l,v}(x):=\hhat_{f_{\l},v}(x) =\lim_{n\to\infty}
\frac{\log^+|f_\l^n(x)|_v}{d^n}.
$$
Notice that $\hhat_{\l,v}(x)$ is a continuous function of both $\l$
and $x$ (see \cite[Prop.1.2]{BH88} for polynomials over
complex numbers; the proof for the nonarchimedean case is
similar). 
As $\C_v$ is a dense subspace of $\Aberk{\C_v}$,
continuity in $\l$ implies that  the canonical local height function 
$\hhat_{\l,v}(\bfc(\lambda))$ has a natural extension on $\Aberk{\C_v}.$ In the
following, we will view $\hhat_{\l,v}(\bfc(\lambda))$ as a continuous function on
$\Aberk{\C_v}.$ It follows from the definition of $M_{\bfc,v}$
that 
$\l\in M_{\bfc,v}$ if and only if $\hhat_{\l,v}(\bfc(\lambda)) = 0.$  Thus, 
$M_{\bfc,v}$ is a closed subset of $\Aberk{\C_v}.$ In fact, the following is
true. 

\begin{prop}
\label{prop:bounded mandelbrot} 
$M_{\bfc,v}$ is a compact subset of $\Aberk{\C_v}.$ 
\end{prop}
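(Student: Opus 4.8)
The plan is to show that $M_{\bfc,v}$ is both closed and bounded in $\Aberk{\C_v}$; closedness has essentially been observed already (it is the zero set of the continuous function $\l\mapsto \hhat_{\l,v}(\bfc(\l))$), so the real content is boundedness, i.e.\ producing an explicit radius $R_v>0$ such that $[T]_\l > R_v$ forces $\sup_n \left[g_{\bfc,n}(T)\right]_\l = \infty$. First I would treat the nonarchimedean case. The idea is that for $|\l|_v$ large, the family $f_\l$ "looks like" a single polynomial whose coefficients are dominated by powers of $\l$, and one can run the standard escape estimate uniformly. Concretely, using the rewriting $f_\l(x) = P(x) + \sum_{j=1}^r Q_j(x)\l^{m_j}$ together with hypothesis \eqref{what we need for degree} that $m = \deg(\bfc)\ge m_r$, Lemma~\ref{lem:degree in l} tells us $g_{\bfc,1}(\l) = f_\l(\bfc(\l))$ has degree $md$ in $\l$ with leading coefficient $q_m^d$. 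So for $|\l|_v$ sufficiently large (say larger than some explicit $R_v$ depending on the coefficients of the $c_i$, of $\bfc$, and on $r_v$ from \eqref{definition N_v}), one gets $\big|g_{\bfc,1}(\l)\big|_v = |q_m|_v^d\,|\l|_v^{md} > r_v'$, where $r_v'$ is a bound guaranteeing that $|f_\l^n(\bfc(\l))|_v$ is strictly increasing and tends to $\infty$; here one must be slightly careful that $r_v$ itself depends on $\l$ through the coefficients $c_i(\l)$, so I would first bound $r_v$ for $f_\l$ by a quantity of the form $C\cdot\max(1,|\l|_v^{m_r/(d-1)})$ (or better, absorb everything into a single threshold) and then compare exponents: since $m d > m \ge m_r$, the polynomial growth of $|g_{\bfc,1}(\l)|_v$ beats the growth of the escape radius, so a single large-$|\l|_v$ threshold works.

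For the archimedean case $\C_v = \C$, the argument is the same in spirit but uses the archimedean escape estimate quoted in Section~\ref{notation}: if $|x|_v$ is large then $|f_\l(x)|_v \gg |x|_v^d$ and $|f_\l^n(x)|_v\to\infty$. Again one needs this "largeness" threshold to be controllable uniformly in $\l$ for $|\l|_v\ge R_v$, which follows because the coefficients $c_i(\l)$ are polynomials in $\l$, hence their archimedean size is polynomially bounded, and $|\bfc(\l)|_v$ grows like $|q_m|_v|\l|_v^m$ with $md > m_r\ge$ (degree of each $c_i$), so for $|\l|_v$ past some explicit radius $\bfc(\l)$ already lies in the escaping region of $f_\l$ and stays there. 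Thus $M_{\bfc,v}$ is contained in the closed disk of radius $R_v$ in $\Aberk{\C_v}$.

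Finally, to conclude compactness: $M_{\bfc,v}$ is a closed subset (preimage of $\{0\}$ under the continuous extension of $\hhat_{\l,v}(\bfc(\l))$ to $\Aberk{\C_v}$, as noted just before the Proposition) of the closed disk $\bar D(0,R_v)$, which is compact in $\Aberk{\C_v}$ since $\Aberk{\C_v}$ is locally compact and $\bar D(0,R_v)$ is closed and bounded — in the nonarchimedean Berkovich setting closed Berkovich disks are compact, and in the archimedean case this is just Heine--Borel. A closed subset of a compact set is compact, so $M_{\bfc,v}$ is compact.

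The main obstacle I anticipate is making the escape-radius threshold genuinely uniform in $\l$: the nonarchimedean escape radius $r_v$ for $f_\l$ in \eqref{definition N_v} involves the coefficients $c_i(\l)$, which themselves grow with $\l$, so one must verify that after one application of $f_\l$ the value $f_\l(\bfc(\l))$ is large \emph{relative to this $\l$-dependent radius}, not merely large in absolute terms. This is where the hypothesis $m\ge m_r$ (equivalently \eqref{what we need for degree}) is doing the essential work: it guarantees that in the rewriting $f_\l(x)=P(x)+\sum_j Q_j(x)\l^{m_j}$ the leading term is $x^d$ (coming from $P$), so $\deg_\l f_\l(\bfc(\l)) = md$ strictly exceeds both $m$ and every $m_j$, and the relevant comparison of growth rates goes through. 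I would isolate this comparison as the one nontrivial computation and present the rest — closedness, and the reduction of compactness to the disk bound — as immediate.
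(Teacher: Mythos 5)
Your plan matches the paper's proof: reduce compactness to boundedness (closedness having already been observed), then bound $M_{\bfc,v}$ by comparing the growth of $|\bfc(\l)|_v\sim|q_m|_v|\l|_v^m$ against the $\l$-dependent escape radius of $f_\l$, with the hypothesis $m\ge m_r$ doing exactly the work you identify. One minor slip: for a monic polynomial in normal form the relevant escape-radius bound is $\max_{0\le i\le d-2}|c_i(\l)|_v^{1/(d-i)}\ll|\l|_v^{m_r/2}$, not $|\l|_v^{m_r/(d-1)}$; the paper's Lemma~\ref{M_a is bounded for archimedean v} uses the sharper $\alpha=\max_i m_i/(d-e_i)\le m_r/2$, and either way the comparison with $m\ge m_r>m_r/2$ still goes through.
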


We already showed that $M_{\bfc,v}$ is a closed subset of the locally
compact space $\Aberk{\C_v}$, and thus in order to prove
Proposition~\ref{prop:bounded mandelbrot} we only need to show that
$M_{\bfc,v}$ is a bounded subset of $\Aberk{\C_v}$. If $\bff$ is a
constant family of polynomials, then Proposition~\ref{prop:bounded
  mandelbrot} follows from our assumption that $\deg(\bfc)\ge 1$.
Indeed, if $|\lambda|_v$ is large, then
$|\bfc(\l)|_v$ is large and thus $|\bff^n(\bfc(\l))|_v\to\infty$ as
$n\to\infty$.
Furthermore, for nonarchimedean place $v$ if
$|\l|_v$ is sufficiently large, then (assuming $v$ is nonarchimedean)
\begin{equation}
\label{0 equation 1}
|\bff^n(\bfc(\l))|_v=|\bfc(\l)|_v^{d^n}=|q_m\l^m|_v^{d^n}.
\end{equation} 

So, now we are left with the case that $\bff$ is not a constant family, i.e. $r\ge 1$.
\begin{lemma}
\label{M_a is bounded for archimedean v}
Assume $r\ge 1$, i.e. $\bff$ is not a constant family of polynomials. Then $M_{\bfc,v}$ is a bounded subset of  $\Aberk{\C_v}$.
\end{lemma}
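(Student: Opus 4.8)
The plan is to show that for $|\lambda|_v$ sufficiently large, the orbit of $\bfc(\lambda)$ under $f_\lambda$ escapes to infinity, so that $\lambda\notin M_{\bfc,v}$; since $M_{\bfc,v}$ is a closed subset of the locally compact space $\Aberk{\C_v}$, this boundedness gives compactness once combined with Proposition~\ref{prop:bounded mandelbrot}'s earlier reductions. The key observation is that by Lemma~\ref{lem:degree in l}, for $\lambda\in\C_v$ the polynomial $g_{\bfc,n}(\lambda)=f_\lambda^n(\bfc(\lambda))$ has degree $m\cdot d^n$ in $\lambda$ with leading coefficient $q_m^{d^n}$, and crucially $m\ge m_r$ by \eqref{what we need for degree}, so the ``constant'' contribution of $P(x)+\sum_j Q_j(x)\lambda^{m_j}$ to each iterate is of strictly lower order in $\lambda$ than the dominant term $\bfc(\lambda)^{d^n}$. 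First I would make this quantitative: there is a constant $B$ (depending only on the $c_i$, on $\bfc$, and on $v$) such that for all $\lambda\in\C_v$ with $|\lambda|_v\ge B$ one has $|f_\lambda(x)|_v\ge C|\lambda|_v^{m_r}|x|_v^{d}$ for some $C$ whenever $|x|_v\ge B|\lambda|_v^{e}$ for a suitable exponent $e$; more simply, one can track the estimate $|g_{\bfc,n+1}(\lambda)|_v$ versus $|g_{\bfc,n}(\lambda)|_v$ and show the sequence is increasing without bound.

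Concretely, I would proceed as follows. Fix $v$ and choose a real number $R>1$ so large that:  (a) $|q_m|_v|\lambda|_v^{m}\ge 2$ and the dominant-term estimate $|\bfc(\lambda)|_v=|q_m\lambda^m|_v\cdot(1+o(1))\ge \tfrac12|q_m|_v|\lambda|_v^m$ holds for $|\lambda|_v\ge R$; and (b) for any $x\in\C_v$ with $|x|_v$ large relative to the coefficients $|c_i(\lambda)|_v$ (each of which is $O(|\lambda|_v^{\deg c_i})$ with $\deg c_i\le m_r\le m$), we have $|f_\lambda(x)|_v\ge\tfrac12|x|_v^d$. One checks by induction on $n$ that $|g_{\bfc,n}(\lambda)|_v$ stays above the threshold in (b) and satisfies $|g_{\bfc,n}(\lambda)|_v\ge (\tfrac12)^{1+d+\cdots+d^{n-1}}|\bfc(\lambda)|_v^{d^n}\to\infty$, using that $|\bfc(\lambda)|_v>1$. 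Hence $\sup_n[g_{\bfc,n}(T)]_\lambda=\infty$ for all $\lambda\in\C_v$ with $|\lambda|_v\ge R$. Since $\C_v$ is dense in $\Aberk{\C_v}$ and $\lambda\mapsto\hhat_{\lambda,v}(\bfc(\lambda))$ is continuous (as noted before the lemma), and since for $|\lambda|_v\ge R$ one in fact gets $\hhat_{\lambda,v}(\bfc(\lambda))>0$ bounded away from $0$, the same conclusion extends to all $\zeta\in\Aberk{\C_v}$ with $|\zeta|_v\ge R$: no such point lies in $M_{\bfc,v}$. Therefore $M_{\bfc,v}\subseteq\{\zeta:|\zeta|_v\le R\}$, which is bounded.

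The main obstacle is the bookkeeping in step (b): because $f_\lambda$ now has up to $d-2$ coefficients $c_i(\lambda)$ that themselves grow with $\lambda$ (unlike the single parameter in $x^d+\lambda$ from \cite{Matt-Laura}), one must be careful that the lower-order terms $\sum_{i<d}c_i(\lambda)x^i$ do not overtake $x^d$ at any stage of the iteration. The condition $m\ge m_r\ge\deg_\lambda c_i$ for all $i$ is exactly what guarantees that once $|x|_v$ is as large as (a constant times) $|\lambda|_v^{m}$ — which holds already for $x=\bfc(\lambda)$ — it stays that large and the top term dominates; I would isolate this as the inductive invariant ``$|g_{\bfc,n}(\lambda)|_v\ge A\,|\lambda|_v^{m d^{n-1}}$'' for a fixed $A$, and verify it propagates. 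In the archimedean case this is just the usual escape-rate estimate for polynomials; in the nonarchimedean case the ultrametric inequality makes the dominance exact, and indeed \eqref{0 equation 1} already records the shape of the answer there. Once the invariant is in place the divergence $|g_{\bfc,n}(\lambda)|_v\to\infty$ is immediate, completing the proof.
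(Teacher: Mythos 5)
Your proposal follows essentially the same strategy as the paper's proof: estimate $|\bfc(\lambda)|_v\gtrsim |\lambda|_v^m$ for $|\lambda|_v$ large, show that this lies above the escape threshold of $f_\lambda$ (relying on the normal-form fact that each $c_i$ multiplies $x^i$ with $i\le d-2$), and conclude that the iterates diverge. The paper streamlines the ``bookkeeping obstacle'' you identify by pinning down the escape threshold explicitly as $C_3|\lambda|_v^{\alpha}$ with $\alpha:=\max_j m_j/(d-e_j)\le m_r/2$, so that the hypothesis $m\ge m_r$ cleanly yields $m\ge 2\alpha>\alpha$; your version (comparing $|x|_v$ against $|\lambda|_v^m$ directly, which works because $d-i\ge 2$) is equivalent but leaves the exponent implicit. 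One small slip worth noting: the preliminary estimate $|f_\lambda(x)|_v\ge C|\lambda|_v^{m_r}|x|_v^{d}$ you float is not correct as written (no $|\lambda|_v^{m_r}$ factor should appear when $x^d$ dominates), but you immediately discard it in favour of tracking the orbit directly, and that argument is sound.
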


\begin{proof}
First we rewrite as before
$$f_{\l}(x)=P(x)+\sum_{j=1}^r Q_j(x)\cdot \l^{m_j},$$
with $P(x)$ in normal form of degree $d$, and each polynomial $Q_j$ of degree $e_j\le d-2$; also, $0<m_1<\cdots < m_r$. We know $m=\deg(\bfc)\ge m_r$.

Since $q_m\lambda^{m}$ is the leading monomial in $\bfc$,
there exists a positive real number $C_1$ depending only on $v$,
coefficients of $c_i(\lambda), i=0, \ldots, d-2$ and on
$\bfc$ such that if $|\lambda|_v>C_1$, then
$|\bfc(\lambda)|_v>\frac{|q_m|_v}{2}\cdot 
|\lambda|_v^{m}$.

Let $\alpha:=\max_{i=1}^{r} \frac{m_i}{d-e_i}$; then $\alpha\le m_r/2$ since $e_i\le d-2$ for all $i$. 
There exist positive real numbers $C_2$ and $C_3$ (depending only on
$v$, and on the coefficients of $c_i(\lambda)$) such that if $|\lambda|_v>C_2$, and if $|x|_v> C_3|\lambda|_v^{\alpha}$, then 
$$|f_{\lambda}(x)|_v>\frac{|x|_v^d}{2}>|x|_v,$$
and thus $|f_{\lambda}^n(x)|_v\to\infty$ as $n\to\infty$.

On the other hand, since $m\ge m_r\ge 2\alpha > \alpha$, we conclude
that if 
$|\lambda|_v>\left(2C_3/|q_m|_v\right)^{1/(m-\alpha)}$
then $$\frac{|q_m|_v}{2}\cdot
|\lambda|_v^{m}>C_3|\lambda|_v^{\alpha}.$$ 
We let $C_4:=\max\left\{C_1,C_2,\left(2C_3/|q_m|_v\right)^{1/(m-\alpha)}, |q_m|_v^{-1/m}\right\}$. So, if $|\lambda|_v>C_4$ then 
$$|\bfc(\l)|_v>\frac{|q_m|_v}{2}\cdot |\lambda|_v^{m}>C_3|\lambda|_v^{\alpha},$$ 
and thus $|f_{\lambda}^n(\bfc(\l))|_v\to\infty$ as $n\to\infty$.
We conclude that if $\lambda\in M_{\bfc,v}$, then $|\lambda|_v\le C_4$, as desired. 
\end{proof}

\begin{remark}
It is possible to make the constants in the above proof explicit. Moreover,
for a nonarchimedean place $v$ the estimate of the absolute values can be
precise. For example, if $v$ is nonarchimedean, we can ensure that if
$|\lambda|_v>C_4$, then  
\begin{equation}
\label{precise absolute value}
|f_{\lambda}^{n}(\bfc(\l))|_v=|q_m\lambda^{m}|_v^{d^n}\text{ for all $n\ge 1$.}
\end{equation}
\end{remark}

\begin{thm}
  \label{mandelbrot capacity}
The logarithmic capacity of
$M_{\bfc,v}$ is $\g(M_{\bfc,v}) = |q_m|_v^{-1/m} $. 
\end{thm}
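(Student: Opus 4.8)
The plan is to compute $\g(M_{\bfc,v})$ by exhibiting an explicit candidate for the Green's function of $M_{\bfc,v}$ relative to $\infty$ and then invoking the characterization in Lemma~\ref{green function}(3). The natural candidate is
$$
G(z) := \frac{1}{m}\,\hhat_{z,v}(\bfc(z)),
$$
viewed as a function on $\Aberk{\C_v}$ via the continuous extension discussed before Proposition~\ref{prop:bounded mandelbrot}. First I would check that $G$ is nonnegative, continuous on $\Aberk{\C_v}$, and identically zero exactly on $M_{\bfc,v}$ — the last two facts are essentially the definition of $M_{\bfc,v}$ together with the continuity of $\hhat_{\l,v}(\bfc(\l))$ already recorded. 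The key analytic input is that $G$ is subharmonic on $\Aberk{\C_v}$ and harmonic on $U$, the unbounded component of $\Aberk{\C_v}\setminus M_{\bfc,v}$; this follows because $\hhat_{\l,v}(\bfc(\l)) = \lim_n d^{-n}\log^+[g_{\bfc,n}(T)]_\l$ is a decreasing (or uniform) limit of functions of the form $d^{-n}\log^+[g_{\bfc,n}(T)]_\l$, each of which is continuous and subharmonic (being $\log^+$ of the absolute value of a polynomial in $\l$, after substituting), and on $U$ the $\log^+$ can be dropped so one gets a genuine harmonic function in the limit. I would cite the potential-theoretic facts about $\log|h(\l)|_v$ for $h$ a polynomial from \cite{Baker-Rumely}.

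The crucial quantitative step is the asymptotic expansion of $G$ at $\infty$. Using Lemma~\ref{lem:degree in l}, $g_{\bfc,n}(\l)$ has degree $m d^n$ and leading coefficient $q_m^{d^n}$, so for $|\l|_v$ large
$$
\log^+[g_{\bfc,n}(T)]_\l = d^n\log|\l|_v \cdot m + d^n\log|q_m|_v + o(d^n),
$$
whence $\hhat_{z,v}(\bfc(z)) = m\log|z|_v + \log|q_m|_v + o(1)$ as $|z|_v\to\infty$ (the $o(1)$ disappears when $v$ is nonarchimedean, as noted in the excerpt and confirmed by \eqref{precise absolute value}). Dividing by $m$ gives
$$
G(z) = \log|z|_v + \frac{\log|q_m|_v}{m} + o(1).
$$
In particular $G(z) - \log^+|z|_v$ is bounded, so Lemma~\ref{green function}(3) applies: $G = G_{M_{\bfc,v}}$, and moreover $V(M_{\bfc,v}) = \frac{\log|q_m|_v}{m}$, hence
$$
\g(M_{\bfc,v}) = e^{-V(M_{\bfc,v})} = |q_m|_v^{-1/m},
$$
which is the claim.

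The main obstacle I anticipate is rigorously establishing that the candidate $G$ is subharmonic on all of $\Aberk{\C_v}$ and harmonic on $U$ in the nonarchimedean (Berkovich) sense — justifying the passage to the limit in $n$ and handling the behaviour at the various point types requires care, and this is exactly the kind of technical work the authors flag as "significantly more technical" than in \cite{Matt-Laura}. A secondary point requiring attention is checking that $G$ vanishes on all of $M_{\bfc,v}$ (not merely its boundary) and that $M_{\bfc,v}$ is nonpolar, i.e. $\g(M_{\bfc,v})>0$; the latter follows a posteriori from the finiteness of $V(M_{\bfc,v})=\frac{\log|q_m|_v}{m}$ exhibited above, but one should confirm there is no circularity — here compactness (Proposition~\ref{prop:bounded mandelbrot}) plus the explicit bounded harmonic behaviour at $\infty$ suffices to run Lemma~\ref{green function}(3) directly without first knowing $\g>0$.
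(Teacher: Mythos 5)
Your proposal matches the paper's argument essentially step for step: the paper also takes $G_{\bfc,v}(\l) = \frac{1}{m}\hhat_{f_\l,v}(\bfc(\l))$ (Proposition~\ref{prop:local height and green's function}), establishes it is continuous and subharmonic by uniform-on-compacts convergence of $\frac{1}{\deg g_{\bfc,n}}\log^+[g_{\bfc,n}(T)]_\l$ together with \cite[Prop.~8.26(c)]{Baker-Rumely}, computes the same asymptotic $G_{\bfc,v}(\l) = \log|\l|_v + \frac{\log|q_m|_v}{m} + o(1)$ from Lemma~\ref{lem:degree in l}, and then invokes Lemma~\ref{green function}(3) to identify $G_{\bfc,v}$ as the Green's function and read off $V(M_{\bfc,v})$. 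The only small imprecision in your write-up is describing the limit as ``decreasing''; the convergence used is uniform on compact subsets (following \cite[Prop.~1.2]{BH88}), not monotone, but you offer this alternative yourself, so the argument is the same as the paper's.
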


The strategy for the proof of Theorem~\ref{mandelbrot capacity} is to
construct a continuous subharmonic function $G_{c,v} : \Aberk{\C_v}\to \R$
satisfying Lemma~\ref{green function}~(3).  
Analogous to the family $f_{\lambda}(x) = x^d + \lambda$ treated in
\cite{Matt-Laura} we let 

\begin{equation}
  \label{mandelbrot green function}
G_{\bfc,v}(\l) := \lim_{n\to \infty}\, \frac{1}{\deg(g_{\bfc,n})} \log^+ [g_{\bfc,n}(T)]_\l. 
\end{equation}
Then by a similar reasoning as in the proof of
\cite[Prop.~3.7]{Matt-Laura}, it can be shown that the limit exists
for all $\l\in \Aberk{\C_v}$.
In fact, by the definition of canonical local height, for $\l\in \C_v$
we have
\begin{align*}
  G_{\bfc,v}(\l) & = \lim_{n\to \infty}\, \frac{1}{md^{n}}\log^+ |f_\l^{n}(\bfc(\l))|_v \quad \text{since 
  $\deg(g_{\bfc,n}) = m d^{n}$ by Lemma~\ref{lem:degree in l}, } \\
&  = \frac{1}{m}\cdot \hhat_{f_{\l},v}(\bfc(\l)) \quad \text{by the definition of canonical local height.}
\end{align*}

As a consequence of the above computation, we have the following. 
\begin{prop}[c.f. {\cite[Theorem~II.0.1]{silverman94-1} and
    \cite[Theorem~III.0.1 and Corollary~III.0.3]{silverman94-2}}] 
  \label{prop:local height and green's function}
$$ \hhat_{f_{\l},v}(\bfc(\l)) = \deg(\bfc) G_{\bfc,v}(\l) .$$
\end{prop}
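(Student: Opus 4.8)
The identity is essentially contained in the computation displayed just before the statement, so the plan is to make that computation the core of the argument and then upgrade it from classical points to all Berkovich points by a density argument. First I would fix $\l\in\C_v$ and unwind the definition \eqref{mandelbrot green function} of $G_{\bfc,v}(\l)$. By Lemma~\ref{lem:degree in l} we have $\deg(g_{\bfc,n})=m\,d^n$ where $m=\deg(\bfc)$, and $g_{\bfc,n}(\l)=f_\l^n(\bfc(\l))$, so
\[
G_{\bfc,v}(\l)=\lim_{n\to\infty}\frac{1}{m\,d^n}\log^+\bigl|f_\l^n(\bfc(\l))\bigr|_v=\frac{1}{m}\,\hhat_{f_\l,v}(\bfc(\l)),
\]
the last equality being exactly the definition of the canonical local height $\hhat_{f_\l,v}$. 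Multiplying through by $m=\deg(\bfc)$ gives the asserted formula for every $\l\in\C_v$.

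It remains to extend the equality from $\C_v$ to all of $\Aberk{\C_v}$. For this I would invoke that both sides are continuous functions on $\Aberk{\C_v}$: the right-hand side $\hhat_{f_\l,v}(\bfc(\l))$ was already observed (using continuity of $\hhat_{\l,v}(x)$ in $\l$ over $\C_v$ together with density of $\C_v$ in $\Aberk{\C_v}$) to extend to a continuous function on $\Aberk{\C_v}$; the left-hand side $G_{\bfc,v}$ is, by the same reasoning as in \cite[Prop.~3.7]{Matt-Laura}, a well-defined continuous (indeed subharmonic) function on $\Aberk{\C_v}$, the defining limit in \eqref{mandelbrot green function} converging locally uniformly. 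Since two continuous functions on $\Aberk{\C_v}$ that agree on the dense subset $\C_v$ must coincide, the identity $\hhat_{f_\l,v}(\bfc(\l))=\deg(\bfc)\,G_{\bfc,v}(\l)$ holds for all $\l\in\Aberk{\C_v}$.

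I do not expect a serious obstacle here. The only points that require care are the two inputs used in the density step, namely (a) that the defining limit for $G_{\bfc,v}$ converges uniformly on compact subsets of $\Aberk{\C_v}$, so that $G_{\bfc,v}$ is continuous there, and (b) that the local canonical height $\hhat_{\l,v}(\bfc(\l))$ genuinely extends continuously to the Berkovich line; both are already recorded in the discussion preceding the statement and follow the corresponding arguments in \cite{Matt-Laura}. Once these are granted, the proposition is immediate.
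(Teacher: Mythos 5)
Your proposal is correct and takes essentially the same route as the paper: the core is the computation (using Lemma~\ref{lem:degree in l} that $\deg(g_{\bfc,n})=m d^n$) identifying $G_{\bfc,v}(\l)$ with $\frac{1}{m}\hhat_{f_\l,v}(\bfc(\l))$ for $\l\in\C_v$, and the paper likewise regards both sides as the continuous extensions to $\Aberk{\C_v}$ (the extension of $\hhat_{\l,v}(\bfc(\l))$ via continuity in $\l$ and density of $\C_v$, and the extension of $G_{\bfc,v}$ via the locally uniform convergence argument as in \cite[Prop.~3.7]{Matt-Laura}). Your explicit density step at the end just makes the implicit step in the paper visible.
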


\begin{remark}
The above formula holds in the more general case of Question~\ref{main conjecture}; for example, one may work with a rational function $\bfc\in\C(\l)$.
\end{remark}

Note that $G_{\bfc,v}(\l) \ge 0$ for all $\l\in \Aberk{\C_v}.$ Moreover, 
we see easily that $\l\in M_{\bfc,v}$ if and only if $G_{\bfc,v}(\l) = 0.$ 

\begin{lemma}
\label{green's function for mandelbrot}
$G_{\bfc,v}$ is the Green's function for $M_{\bfc,v}$ relative to $\infty.$ 
\end{lemma}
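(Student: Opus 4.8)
The plan is to verify that $G_{\bfc,v}$ satisfies the three hypotheses of Lemma~\ref{green function}~(3), with $E = M_{\bfc,v}$ and $U$ the unbounded component of $\Aberk{\C_v}\setminus M_{\bfc,v}$; the uniqueness part of that lemma then forces $G_{\bfc,v} = G_{M_{\bfc,v}}$. Concretely, I must check: (a) $G_{\bfc,v}$ is continuous on $\Aberk{\C_v}$ and subharmonic, and harmonic off $M_{\bfc,v}$; (b) $G_{\bfc,v}$ vanishes identically on $M_{\bfc,v}$; and (c) $G_{\bfc,v}(\l) - \log^+|\l|_v$ is bounded.

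For (b), I would simply quote the computation already made in the excerpt: $G_{\bfc,v}(\l) = \frac{1}{m}\hhat_{f_\l,v}(\bfc(\l))$ for $\l\in\C_v$, extended by continuity to $\Aberk{\C_v}$, and $\l\in M_{\bfc,v}$ iff $\hhat_{f_\l,v}(\bfc(\l)) = 0$ iff $G_{\bfc,v}(\l) = 0$; this is exactly the observation recorded just before the statement. For (c), I would use the bounded-ness estimates from the proof of Lemma~\ref{M_a is bounded for archimedean v} (and the constant-family case treated just before it): for $|\l|_v$ large, $|f_\l^n(\bfc(\l))|_v$ behaves like $|q_m\l^m|_v^{d^n}$, so $\frac{1}{md^n}\log^+|f_\l^n(\bfc(\l))|_v \to \frac{1}{m}\log|q_m\l^m|_v = \log|\l|_v + \frac{1}{m}\log|q_m|_v$. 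Hence $G_{\bfc,v}(\l) = \log|\l|_v + \frac{1}{m}\log|q_m|_v + o(1)$ as $|\l|_v\to\infty$ (with the $o(1)$ removable when $v$ is nonarchimedean, by \eqref{precise absolute value}), which in particular shows $G_{\bfc,v}(\l) - \log^+|\l|_v$ is bounded; this is also what will feed into the capacity computation of Theorem~\ref{mandelbrot capacity}.

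The real work is (a). Continuity: each $g_{\bfc,n}$ is a polynomial in $T$ with coefficients in $K\subset\C_v$, so $\l\mapsto \log^+[g_{\bfc,n}(T)]_\l$ is a continuous (indeed subharmonic) function on $\Aberk{\C_v}$; I must show the normalized limit \eqref{mandelbrot green function} converges locally uniformly, so that $G_{\bfc,v}$ inherits continuity and subharmonicity. This is where I would follow the argument of \cite[Prop.~3.7]{Matt-Laura}: using the recursion $g_{\bfc,n+1} = f_\l(g_{\bfc,n})$ together with the degree computation $\deg(g_{\bfc,n}) = md^n$ of Lemma~\ref{lem:degree in l}, one gets a telescoping bound
\[
\left| \frac{1}{md^{n+1}}\log^+[g_{\bfc,n+1}(T)]_\l - \frac{1}{md^n}\log^+[g_{\bfc,n}(T)]_\l \right| \le \frac{C}{d^n}
\]
on compact subsets of $\Aberk{\C_v}$, where $C$ depends only on $v$, the $c_i$, and $\bfc$ (the key point being that $f_\l$ perturbs $x^d$ by lower-order terms whose contribution, after normalization, decays geometrically). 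Summing a geometric series gives a uniform Cauchy estimate, hence local uniform convergence, hence $G_{\bfc,v}$ is continuous and subharmonic, and harmonic wherever $\sup_n [g_{\bfc,n}(T)]_\l = \infty$, i.e. on $\Aberk{\C_v}\setminus M_{\bfc,v}$. I expect the telescoping estimate — making the constant $C$ uniform on compacta while handling the $\log^+$ truncation and the various types of Berkovich points — to be the main obstacle; once it is in hand, Lemma~\ref{green function}~(3) closes the argument.
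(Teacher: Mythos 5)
Your proposal is correct and follows the same route as the paper: establish locally uniform convergence of the normalized iterates (the BH88-style estimate) to get that $G_{\bfc,v}$ is continuous and subharmonic via \cite[Prop.~8.26(c)]{Baker-Rumely}, note it vanishes on $M_{\bfc,v}$, compute the asymptotics $\log|\l|_v + \frac{1}{m}\log|q_m|_v + o(1)$ for large $|\l|_v$ from the estimates of Lemma~\ref{M_a is bounded for archimedean v}, and close with Lemma~\ref{green function}~(3). The only slight difference is that you explicitly flag the harmonicity of $G_{\bfc,v}$ off $M_{\bfc,v}$, which the paper leaves implicit in the uniform-limit-of-harmonic-functions step.
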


The proof is essentially the same as the proof of
\cite[Prop.~3.7]{Matt-Laura}, we simply give a sketch of the idea. 

\begin{proof}[Proof of Lemma~\ref{green's function for mandelbrot}.]
We deal with the case that $v$ is nonarchimedean (the case when $v$ is archimedean follows similarly). So, using the same argument as in the proof of
  \cite[Prop.~1.2]{BH88}, we observe that as a function of $\l,$ 
$$\text{the
  function }\frac{\log^+[g_{\bfc,n}(T)]_\l}{\deg(g_{\bfc,n})}\text{  converges uniformly on 
  compact subsets of }\Aberk{\C_v}.$$ 
So, 
$$\text{the function }\frac{\log^+[g_{\bfc,n}(T)]_\l}{\deg(g_{\bfc,n})}\text{ is a continuous subharmonic function on }\Aberk{\C_v},$$
which converges to $G_{\bfc,v}$ uniformly; hence it follows from
  \cite[Prop.~8.26(c)]{Baker-Rumely} that $G_{\bfc,v}$ is continuous and
  subharmonic on $\Aberk{\C_v}.$  Furthermore, as remarked above,
  $G_{\bfc,v}$ is zero on $M_{\bfc,v}.$ 

Arguing as in the proof of Lemma~\ref{M_a is bounded for archimedean v} (see \eqref{0 equation 1} and \eqref{precise absolute value}), if $|\lambda|_v>C_4$ then for $n\ge 1$ we have
$$|g_{\bfc,n}(\l)|_v=|f_{\lambda}^{n}(\bfc(\l))|_v=|q_m\lambda^{m}|_v^{d^{n}}.$$
Hence, for $|\lambda|_v>C_4$ we have
  \begin{align*}
    G_{\bfc,v}(\l) & = \lim_{n\to \infty}\, \frac{1}{md^{n}} \log
    |g_{\bfc,n}(\l)|_v
    \\
     & = \log|\l|_v + \frac{\log |q_m|_v}{m}.
  \end{align*}
  It follows from Lemma~\ref{green function}~(3), that $G_{\bfc,v}$ is indeed
  the Green's function of $M_{\bfc,v}.$ 
\end{proof}

Now we are ready to prove Theorem~\ref{mandelbrot capacity}.
\begin{proof}[Proof of  Theorem~\ref{mandelbrot capacity}.]
As in the proof of Lemma~\ref{green's function for mandelbrot}, we
have
\[
 G_{\bfc,v}(\l)  = \log|\l|_v + \frac{\log |q_m|_v}{m} +o(1)
\]
for $|\l|_v$ sufficiently large. By Lemma~\ref{green
    function}~(3), we find that $V(M_{\bfc,v}) = \frac{\log |q_m|_v}{m}$. Hence,
  the logarithmic capacity of $M_{\bfc,v}$ is
  \[
  \g(M_{\bfc,v}) = e^{-V(M_{\bfc,v})} = \frac{1}{|q_m|^{1/m}_v}
\]
as desired. 
\end{proof}

Let $\M_{\bfc} = \prod_{v\in
\Omega} M_{\bfc,v}$ be the generalized adelic Mandelbrot set associated to
$c$. As a corollary to Theorem~\ref{mandelbrot capacity} we see that
$\M_{\bfc}$ satisfies the hypothesis of Theorem~\ref{thm:equidistribution}.
\begin{cor}
 \label{adelic mandelbrot}
For all but finitely many nonarchimedean places $v$, we have that $M_{\bfc,v}$ is the closed unit disk $\cD(0;1)$ in $\C_v$; furthermore $\g(\M_{\bfc}) = 1$.
\end{cor}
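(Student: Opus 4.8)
The plan is to show two things: (a) for all but finitely many nonarchimedean $v\in\Omega_K$, $M_{\bfc,v}=\cD(0;1)$; and (b) that this, together with Theorem~\ref{mandelbrot capacity}, forces $\g(\M_{\bfc})=1$. For (b), once (a) is known, the product $\g(\M_{\bfc})=\prod_v\g(M_{\bfc,v})^{N_v}=\prod_v|q_m|_v^{-N_v/m}$ is a finite product (all but finitely many factors are $1$ by (a)), and by the product formula applied to $q_m\in K^*$ we have $\prod_v|q_m|_v^{N_v}=1$, hence $\prod_v|q_m|_v^{-N_v/m}=1$. So the whole content is in (a).

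For (a), I would argue as follows. The coefficients of the $c_i(\l)$ and of $\bfc(\l)$ are finitely many elements of $K$; choose the finite set $T\subset\Omega_K$ of nonarchimedean places consisting of all archimedean places together with every $v$ at which some coefficient of some $c_i$ or of $\bfc$ fails to be a $v$-adic integer, plus every $v$ at which $|q_m|_v\ne 1$ (there are finitely many such $v$ since $q_m\in K^*$). For $v\notin T$, all the data defining $f_\l$ and $\bfc$ have $v$-adic absolute value $\le 1$, the leading coefficient $q_m$ of $\bfc$ is a $v$-adic unit, and the leading coefficient $1$ of $f_\l$ is trivially a unit. Under these conditions the reduction of the family is ``good'': using the non-archimedean estimate \eqref{definition N_v} and the reasoning behind \eqref{0 equation 1} and \eqref{precise absolute value}, one checks that for $v\notin T$, if $|\l|_v>1$ then $|\bfc(\l)|_v=|q_m\l^m|_v=|\l|_v^m>1$ and then $|f_\l^n(\bfc(\l))|_v=|q_m\l^m|_v^{d^n}\to\infty$, so $\l\notin M_{\bfc,v}$; whereas if $|\l|_v\le 1$ then all the quantities $c_i(\l)$, $\bfc(\l)$ lie in the valuation ring, $\bfc(\l)$ is $v$-integral, and $f_\l$ maps the closed unit disk to itself (since $f_\l$ has $v$-integral coefficients and is monic), so the orbit of $\bfc(\l)$ stays bounded and $\l\in M_{\bfc,v}$. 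Hence $M_{\bfc,v}=\{\l\in\Aberk{\C_v}:|\l|_v\le 1\}$; one must also note that the Berkovich points of $\Aberk{\C_v}$ over the closed unit disk are all in $M_{\bfc,v}$ by the same argument applied to the seminorms $[g_{\bfc,n}(T)]_\l$, so indeed $M_{\bfc,v}=\cD(0;1)$ as a subset of $\Aberk{\C_v}$. This gives (a).

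The main obstacle is making the good-reduction bookkeeping for (a) airtight, in particular verifying the two directions of $M_{\bfc,v}=\cD(0;1)$ uniformly in the Berkovich topology rather than just on classical points: one wants $[g_{\bfc,n}(T)]_\l\le 1$ for all $\l$ of sup-norm $\le 1$ (which follows because $g_{\bfc,n}\in\OO_{K,v}[T]$ when the coefficients are $v$-integral, by induction on $n$ using that $f_\l$ has $v$-integral coefficients), and $\sup_n[g_{\bfc,n}(T)]_\l=\infty$ for $\l$ of sup-norm $>1$ (which follows from the analogue of \eqref{precise absolute value} in the nonarchimedean Berkovich setting, valid because $q_m$ is a $v$-unit so there is no cancellation in the leading term). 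Once these are in place the corollary follows immediately by combining with Theorem~\ref{mandelbrot capacity} and the product formula as indicated above.
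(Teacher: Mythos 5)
Your proof is correct and follows essentially the same route as the paper: exclude the finitely many nonarchimedean $v$ where some coefficient of a $c_i$ or of $\bfc$ is not $v$-integral or $|q_m|_v\neq 1$; for the remaining $v$ show integrality of the orbit when $|\l|_v\le 1$ and domination by the leading ($v$-unit) term when $|\l|_v>1$; then get $\g(\M_{\bfc})=1$ from Theorem~\ref{mandelbrot capacity} and the product formula applied to $q_m$. The only cosmetic difference is that the paper reads off $|g_{\bfc,n}(\l)|_v=|\l|_v^{md^n}$ directly from the fact that $g_{\bfc,n}$ is an integral polynomial in $\l$ with unit leading coefficient (Lemma~\ref{lem:degree in l}), whereas you iterate $f_\l$ starting from $\bfc(\l)$; both yield the same estimate.
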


\begin{proof}
For each place $v$ where all coefficients of $c_i(\l), i=0,
\ldots,d-2$ and of $\bfc(\l)$ are
$v$-adic integral, and moreover $|q_m|_v=1$, we have that
$M_{\bfc,v}=\cD(0,1)$. Indeed, $\cD(0,1)\subset M_{\bfc,v}$ since then
$f_{\lambda}^n(\bfc(\l))$ is always a $v$-adic integer. For the converse
implication we note that each coefficient of $g_{\bfc,n}(\lambda)$ is a
$v$-adic integer, while the leading coefficient is a $v$-adic unit for
all $n\ge 1$; thus
$|g_{\bfc,n}(\lambda)|_v=|\lambda|_v^{md^{n}}\to\infty$ if
$|\lambda|_v>1$. Note that $q_m\ne 0$ and so, the second assertion in
Corollary~\ref{adelic mandelbrot} follows immediately by the product formula in $K$.
\end{proof} 

Using Proposition~\ref{prop:local height and green's function} and the decomposition of the global canonical height as a sum of local canonical heights we obtain the following result.
\begin{cor}
\label{second important remark}
Let $\l\in\Kbar$, let $S$ be the set of $\Gal(\Kbar/K)$-conjugates of $\l$, and let $h_{\M_{\bfc}}$ be defined as in \eqref{def ade hig}. Then $\deg(\bfc)\cdot h_{\M_{\bfc}}(\l)=\hhat_{f_{\l}}(\bfc(\l))$.
\end{cor}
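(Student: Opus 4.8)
The plan is to combine the purely $v$-adic identity of Proposition~\ref{prop:local height and green's function} with the decomposition of the global canonical height into a sum of local canonical heights; the only step that requires any care is the standard Galois-descent bookkeeping relating the places of $L:=K(\l)$ above a place $v\in\Omega_K$ to the conjugates of $\l$, and this is exactly the same bookkeeping that underlies the classical formula for the Weil height.

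I would begin by unwinding the definition \eqref{def ade hig}. By Lemma~\ref{green's function for mandelbrot} the functions $G_{\bfc,v}$ occurring there are the Green's functions of the sets $M_{\bfc,v}$ relative to $\infty$, and by Theorem~\ref{mandelbrot capacity} together with Corollary~\ref{adelic mandelbrot} we have $\g(M_{\bfc,v})>0$ for every $v$ while $M_{\bfc,v}=\cD(0,1)$ for all but finitely many $v$; hence, for the $\Gal(\Kbar/K)$-stable finite set $S$,
$$h_{\M_{\bfc}}(\l)=h_{\M_{\bfc}}(S)=\sum_{v\in\Omega_K}N_v\left(\frac{1}{|S|}\sum_{z\in S}G_{\bfc,v}(z)\right)$$
is a well-defined finite sum, where at each $v$ the set $S$ is viewed inside $\C_v\subset\Aberk{\C_v}$ via the fixed embedding $\Kbar\into\C_v$. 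Now I would apply Proposition~\ref{prop:local height and green's function} at each point $z\in S$: writing $z=\l^\sigma$ for a suitable $\sigma\in\Gal(\Kbar/K)$ and using that the coefficients $c_i(\l)$ of $\bff$ as well as $\bfc$ lie in $K[\l]$, so that $f_z=(f_\l)^\sigma$ and $\bfc(z)=(\bfc(\l))^\sigma$, the identity $\hhat_{f_\l,v}(\bfc(\l))=\deg(\bfc)\,G_{\bfc,v}(\l)$ of that proposition, evaluated at the parameter value $z$, gives $\hhat_{f_z,v}(\bfc(z))=\deg(\bfc)\,G_{\bfc,v}(z)$ for every $z\in S$ and every $v\in\Omega_K$. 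Summing over $z\in S$ and $v\in\Omega_K$ with the weights $N_v$ and the factor $1/|S|$ therefore yields
$$\deg(\bfc)\cdot h_{\M_{\bfc}}(\l)=\sum_{v\in\Omega_K}N_v\left(\frac{1}{|S|}\sum_{z\in S}\hhat_{f_z,v}(\bfc(z))\right).$$

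It remains to recognize the right-hand side as $\hhat_{f_\l}(\bfc(\l))$. Once $\l\in\Kbar$ is fixed, $f_\l$ is a polynomial of degree $d\ge 2$ over the global field $L=K(\l)$ and $\bfc(\l)\in L$, so by the Call--Silverman theory the global canonical height $\hhat_{f_\l}(\bfc(\l))$ is the finite sum of the local canonical heights $\hhat_{f_\l,w}(\bfc(\l))$ over the places $w$ of $L$, with the appropriate normalizations. Grouping these places $w$ according to their restriction $v=w|_K$, and using that the places of $L$ above $v$ correspond to the $\Gal(\Kbar/K)$-orbits of embeddings $L\into\C_v$ over $K$ --- with $\sum_{w\mid v}[L_w:K_v]=[L:K]=|S|$, and with $\hhat_{f_\l,w}(\bfc(\l))$ reading off, through such an embedding, as $\hhat_{f_z,v}(\bfc(z))$ for the corresponding conjugate $z\in S$ --- converts the $L$-adic decomposition into precisely $\sum_{v\in\Omega_K}N_v\bigl(\tfrac{1}{|S|}\sum_{z\in S}\hhat_{f_z,v}(\bfc(z))\bigr)$; this is the exact analogue of the identity expressing the Weil height of $\l$ as $\tfrac{1}{|S|}\sum_{v}N_v\sum_{z\in S}\log^+|z|_v$. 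Comparing with the previous display gives $\deg(\bfc)\cdot h_{\M_{\bfc}}(\l)=\hhat_{f_\l}(\bfc(\l))$, as desired. I expect this last step --- matching the normalizations in the descent from places of $L$ to places of $K$ --- to be the only mildly delicate point; everything else is a direct substitution into Proposition~\ref{prop:local height and green's function} and Lemma~\ref{green's function for mandelbrot}.
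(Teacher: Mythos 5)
Your proof is correct and follows precisely the route the paper indicates: apply Proposition~\ref{prop:local height and green's function} at each Galois conjugate to convert the Green's function values in \eqref{def ade hig} into local canonical heights, then invoke the Call--Silverman decomposition of the global canonical height over $L=K(\l)$ and regroup the places of $L$ according to their restriction to $K$. The paper itself leaves these bookkeeping details implicit, so your write-up is simply a fuller version of the same argument.
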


\begin{remark}
  \label{rem:second important remark}
  Let $h(\lambda)$ denote a Weil height function corresponding to the
  divisor $\infty$ of the parameter space which is the projective line in our
  case. Then, it follows from \cite[Theorem~4.1]{Call-Silverman} that
  $$\lim_{h(\lambda)\to \infty}
  \frac{\hhat_{f_{\l}}(\bfc(\l))}{h(\lambda)} = \hhat_{\bff}(\bfc)$$
  where $\hhat_{\bff}(\bfc)$ is the canonical height associated to the
  polynomial map $\bff$ over the function field $\C(\lambda).$
  Corollary~\ref{second important remark} gives a precise relationship
  between the canonical height function on the special fiber, height
  of the parameter $\lambda$ and 
  $\hhat_{\bff}(\bfc)$ which is equal to $\deg(\bfc)$ in this case. 
\end{remark}


\section{Explicit formula for the Green function}
\label{complex analysis}


In this Section we work under the assumption  that $|\cdot |_v=|\cdot |$ is archimedean; we also denote $\C_v$ by simply $\C$ in this case.  
We show that in this setting  we have an alternative way of representing the Green's function $G_c:=G_{\bfc,v}$ for the Mandelbrot set $M_c:=M_{\bfc,v}$. We continue to work under the same hypothesis on $\bfc(\l)$; in particular we assume that \eqref{what we need for degree} holds. Furthermore, if $r=0$ (i.e., $\bff$ is a constant family of polynomials, then $m=\deg(\bfc)\ge 1$).

Since the degree in $x$ of $f_{\lambda}(x)$  is $d$, there exists a unique function $\phi_{\lambda}$ which is an analytic homeomorphism on the set $U_{R_{\l}}$ for some $R_{\l}\ge 1$ (where for any positive real number $R$, we denote by $U_R$ the open set $\{z\in\C\text{ : }|z|>R\}$) satisfying the following conditions:
\begin{enumerate}
\item $\phi_{\lambda}$ has derivative equal to $1$ at $\infty$, or more precisely, the analytic function $\psi_{\lambda}(z):=1/\phi_{\lambda}(1/z)$ has derivative equal to $1$ at $z=0$; and
\item $\phi_{\lambda}(f_{\lambda}(z))=(\phi_{\lambda}(z))^d\text{ for $|z|>R_{\lambda}$.}$
\end{enumerate}

We can make (1) above more precise by giving the power series expansion:
\begin{equation}
\label{formula for phi lambda}
\phi_{\lambda}(z)=z+ \sum_{n=1}^{\infty} \frac{A_{\lambda,n}}{z^n}.
\end{equation}
From \eqref{formula for phi lambda} we immediately conclude that $|\phi_{\lambda}(z)|= |z|+O_\l(1)$, and thus
\begin{equation}
\label{logarithm estimate for phi lambda}
\log|\phi_{\lambda}(z)|=\log|z| + O_\l(1)\quad \text{for $|z|$ large  enough}. 
\end{equation}
So, using that $\phi_{\lambda}(f_{\lambda}(z))=\phi_{\lambda}(z)^d$, we conclude that if $|z|>R_{\lambda}$, then
\begin{equation}
\label{limit estimate for phi lambda}
\lim_{n\to\infty}\frac{\log^+\left|f_{\lambda}^n(z)\right|}{d^n} =\lim_{n\to\infty} \frac{\log\left|\phi_{\lambda}(f_{\lambda}^n(z))\right|}{d^n}=\log|\phi_{\lambda}(z)|.
\end{equation}
Hence
\eqref{limit estimate for phi lambda} yields that the Green function
$G^{\lambda}$ for the (filled Julia set of the) polynomial
$f_{\lambda}$ equals  
$$G^{\lambda}(z):=\lim_{n\to\infty} \frac{\log\left|f^n_{\lambda}(z)\right|}{d^n}=\log|\phi_{\lambda}(z)|\text{, if $|z|>R_{\lambda}$.}$$
For more details on the Green function associated to any polynomial, see
\cite{Carleson-Gamelin}. Now, we know by
\cite[Ch. III.4]{Carleson-Gamelin} that the function
$\log|\phi_{\lambda}(z)|$ can be extended to a well-defined harmonic
function on the entire basin of attraction $A^{\lambda}_{\infty}$ of the
point at $\infty$ for the polynomial map $f_{\lambda}$. Thus, on
$A^{\lambda}_{\infty}$ we have that
\begin{equation}
\label{formula for the Green's function}
G^{\lambda}(z):=\log|\phi_{\lambda}(z)|
\end{equation}
is the Green function for (the
filled Julia set of) the polynomial $f_{\lambda}.$ Also by \cite[Ch. III.4]{Carleson-Gamelin} we know that  
$$R_{\lambda}:=\max_{f_{\lambda}'(x)=0} e^{G^{\lambda}(x)}\ge 1.$$

In Proposition~\ref{domain of analyticity} we will show that if $|\lambda|$ is sufficiently large, then $\bfc(\l)$ is in the domain of analyticity for $\phi_{\lambda}$. In particular, using \eqref{logarithm estimate for phi lambda} this would yield 
\begin{equation}
\label{crucial equation for the Green function}
G_{c}(\lambda)= \lim_{n\to\infty}\frac{\log^
  +\left|f_{\lambda}^{n}(\bfc(\l))\right|}{md^{n}}
=\frac{\log\left|\phi_{\lambda}(\bfc(\l))\right|}{m}
=\frac{G^\l(\bfc(\l))}{m},
\end{equation}
for $|\lambda|$ sufficiently large.

The proof of the next proposition is similar to the proof of \cite[Lemma 3.2]{Matt-Laura}.

\begin{prop}
\label{domain of analyticity}
There exists a positive constant $C_0$ such that if $|\lambda|>C_0$,
then  $\bfc(\l)$ belongs to the analyticity domain of
$\phi_{\lambda}$. 
\end{prop}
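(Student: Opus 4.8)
The plan is to translate the statement into an inequality between escape rates for $f_{\l}$ and then to estimate the two sides. Recall from \cite[Ch.~III.4]{Carleson-Gamelin} that $\phi_{\l}$ extends to a single-valued analytic isomorphism exactly on the region $\{z : G^{\l}(z)>\log R_{\l}\}$, which it carries onto $\{w : |w|>R_{\l}\}$; since $R_{\l}=\max_{f_{\l}'(x)=0}e^{G^{\l}(x)}$, the analyticity domain of $\phi_{\l}$ is $\{z : G^{\l}(z)>G^{\l}(\g)\text{ for every critical point }\g\text{ of }f_{\l}\}$. So it suffices to show that $G^{\l}(\bfc(\l))>G^{\l}(\g)$ for every critical point $\g$ of $f_{\l}$, provided $|\l|$ is sufficiently large; note that this will in particular force $\bfc(\l)\in A^{\l}_{\infty}$.

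First I would bound $G^{\l}(\bfc(\l))$ from below. By the estimates in the proof of Lemma~\ref{M_a is bounded for archimedean v} there is a constant $C_4$ such that, for $|\l|>C_4$, the whole forward orbit $\{f_{\l}^{n}(\bfc(\l))\}_{n\ge 0}$ stays in a region on which $|f_{\l}(x)|>|x|^{d}/2>|x|$; in particular every point $x$ of this orbit satisfies $|x|\ge|\bfc(\l)|\ge\frac{|q_m|}{2}|\l|^{m}$. Since each $c_i(\l)$ has $\l$-degree at most $m_r\le m$ (because $f_{\l}=P(x)+\sum_{j}Q_j(x)\l^{m_j}$, see \eqref{normal form 22}), along this orbit $|f_{\l}(x)|\cdot|x|^{-d}$ differs from $1$ by at most $\sum_i|c_i(\l)|\,|x|^{i-d}\ll|\l|^{m_r-2m}\to 0$ as $|\l|\to\infty$. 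Writing $u_n:=\log|f_{\l}^{n}(\bfc(\l))|$, the limit defining $G^{\l}$ telescopes as $G^{\l}(\bfc(\l))=u_0+\sum_{n\ge 0}d^{-(n+1)}(u_{n+1}-du_n)$, and $|u_{n+1}-du_n|\le\log 2$ for $|\l|$ large; hence $G^{\l}(\bfc(\l))=\log|\bfc(\l)|+O(1)\ge m\log|\l|-O(1)$.

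Next I would bound $G^{\l}$ from above at the critical points. Put $\rho_{\l}:=2\max\{1,\max_{0\le i\le d-2}|c_i(\l)|^{1/(d-i)}\}$; since $\deg_{\l}c_i\le m_r$ and $d-i\ge 2$ (normal form), $\rho_{\l}\ll|\l|^{m_r/2}$. A direct estimate using $|c_i(\l)|\le(\rho_{\l}/2)^{d-i}$ shows that $\{|z|\ge\rho_{\l}\}$ is forward invariant with $\tfrac12|z|^{d}\le|f_{\l}(z)|\le\tfrac32|z|^{d}$ there, so $\bigl|G^{\l}(z)-\log|z|\bigr|\le\tfrac{\log 2}{d-1}$ for $|z|\ge\rho_{\l}$; following an arbitrary orbit until it first enters $\{|z|\ge\rho_{\l}\}$ (if ever) and using $G^{\l}=d^{-1}G^{\l}\circ f_{\l}$ then yields the uniform bound $G^{\l}(z)\le\log\rho_{\l}+O(1)$ for all $z\in\C$. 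Moreover the critical points of $f_{\l}$ are the roots of $f_{\l}'(x)=dx^{d-1}+\sum_{j=0}^{d-3}(j+1)c_{j+1}(\l)x^{j}$, so the Cauchy bound on roots together with $\deg_{\l}c_{j+1}\le m_r$ gives $|\g|\ll|\l|^{m_r/2}$ for every critical point $\g$. Either way, $G^{\l}(\g)\le\tfrac{m_r}{2}\log|\l|+O(1)$.

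Finally I compare the two bounds. If $r\ge 1$ then $m\ge m_r\ge 1>m_r/2$, so $G^{\l}(\bfc(\l))-G^{\l}(\g)\ge(m-\tfrac{m_r}{2})\log|\l|-O(1)\to+\infty$; if $r=0$ then $f_{\l}$ is a fixed polynomial, $R_{\l}$ is a fixed finite constant, while $G^{\l}(\bfc(\l))\ge m\log|\l|-O(1)\to\infty$ since $m\ge 1$. In either case there is a constant $C_0$ so that $|\l|>C_0$ forces $G^{\l}(\bfc(\l))>\log R_{\l}$, i.e.\ $\bfc(\l)$ lies in the analyticity domain of $\phi_{\l}$. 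The main obstacle is the third step: producing a bound on $G^{\l}$ that is uniform in $\l$ and checking that the growth exponent $m_r/2$ of the critical points is strictly smaller than the growth exponent $m$ of the escape rate of $\bfc(\l)$. The decisive structural input is the normal form: the vanishing of the $x^{d-1}$-coefficient forces $f_{\l}'$ to have a gap below its leading term, so its roots grow no faster than $|\l|^{m_r/2}$, and this is exactly the slack that makes the comparison with $m\log|\l|$ work.
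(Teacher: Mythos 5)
Your proof is correct, and it takes a genuinely different route from the paper's. The paper's argument runs through the B\"{o}ttcher coordinate $\phi_{\l}$ and the Branner--Hubbard inclusion $U_{R^d}\subset\phi_{\l}(U_{R^d/2})$: after first showing $R_{\l}\to\infty$ (Lemma~\ref{radius is large}), it derives $|f_{\l}(x_0)|\ge R_{\l}^d/2$ for the critical point $x_0$ achieving the max, then bounds $|x_0|$ in two cases (all $Q_i$ constant, some $Q_i$ nonconstant) to obtain $R_{\l}\ll|\l|^{m_r/2}$, and finally compares this directly with $|\bfc(\l)|\gg|\l|^{m}$. You instead work entirely at the level of the Green's function: you prove $G^{\l}(\bfc(\l))\ge m\log|\l|-O(1)$ by a telescoping argument along the escaping orbit, and $G^{\l}(\g)\le\tfrac{m_r}{2}\log|\l|+O(1)$ for critical points via the modulus bound $|\g|\ll|\l|^{m_r/2}$ (which comes from the Fujiwara/Cauchy-type bound on roots applied to $f_{\l}'$, exploiting the vanishing of its $x^{d-2}$ coefficient) together with a uniform upper bound for $G^{\l}$ on $\{|z|\le\rho_{\l}\}$. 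This avoids Branner--Hubbard and the paper's Lemma~\ref{radius is large} altogether, and treats both of the paper's cases uniformly. Both arguments rest on the same structural input you identify explicitly: the normal form forces the critical points to grow no faster than $|\l|^{m_r/2}$, strictly slower than the $|\l|^{m}$ growth of $\bfc(\l)$. Two small inaccuracies worth fixing, though neither damages the argument: the claimed ``uniform bound $G^{\l}(z)\le\log\rho_{\l}+O(1)$ for all $z\in\C$'' is false for $|z|$ much larger than $\rho_{\l}$ (the correct statement is $G^{\l}(z)\le\max\{\log|z|,\log\rho_{\l}\}+O(1)$, which is what you actually use since the critical points satisfy $|\g|\ll|\l|^{m_r/2}$); and the chain ``$m\ge m_r\ge 1>m_r/2$'' fails once $m_r\ge 2$ --- what you need and have is simply $m\ge m_r>m_r/2$.
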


\begin{proof}
If $\bff$ is a constant family of polynomials, then the conclusion is immediate since $R_{\l}$ is constant (independent of $\l$) and thus for $|\l|$ sufficiently large, clearly $|\bfc(\l)|>R_{\l}$. So, from now on assume $\bff$ is not a constant family of polynomials, which in particular yields that $r\ge 1$ and $0<m_1<\cdots <m_r$.

First we recall that 
$$R_{\lambda}=e^{G^{\lambda}(x_0)}:=\max_{f_{\lambda}'(x)=0}e^{G^{\lambda}(x)}.$$ 
Next we show that $R_{\lambda}\to\infty$ as $|\lambda|\to\infty$, which will be used later in our proof.

\begin{lemma}
\label{radius is large}
As $|\lambda|\to\infty$, we have $R_{\lambda}\to\infty$.
\end{lemma} 

\begin{proof} We recall that
$$f_{\l}(x)=P(x)+\sum_{i=1}^r \l^{m_i} \cdot Q_i(x),$$
where $P(x)$ is a polynomial in normal form of degree $d$, and $0<m_1<\cdots < m_r$ are positive integers, while the $Q_i$'s are nonzero polynomials of degrees $e_i\le d-2$.  
We have two cases.

{\bf Case 1.} Each $Q_i(x)$ is a constant polynomial. Then the critical points of $f_{\lambda}$ are independent of $\lambda$, i.e., $x_0=O(1)$. We let $x_1\in\C$ such that $f_{\lambda}(x_1)=x_0$. Since each $Q_i$ is a nonzero constant polynomial, we immediately conclude that $|x_1|\gg |\lambda|^{m_r/d}$. On the other hand, since $U_{2R_{\lambda}}\subset \phi_{\lambda}^{-1}\left(U_{R_{\lambda}}\right)$ (by \cite[Corollary 3.3]{BH88}) we conclude that $|x_1|\le 2R_{\lambda}$, and so, $R_{\lambda}\gg |\lambda|^{m_r/d}$. Indeed, if $|x_1|>2R_{\lambda}$, then there exists $z_1\in U_{R_{\lambda}}$ such that $\phi_{\lambda}^{-1}(z_1)=x_1$. Using the fact that $\phi_{\lambda}$ is a conjugacy map at $\infty$ for $f_{\lambda}$ we would obtain that
$$x_0=f_{\lambda}(x_1)=f_{\lambda}(\phi_{\lambda}^{-1}(z_1)) =\phi_{\lambda}^{-1}(z_1^d)\in U_{R_{\lambda}},$$
which contradicts the fact that $x_0$ is not in the analyticity domain of $\phi_{\lambda}$.

{\bf Case 2.} There exists $i=1,\dots,r$ such that $Q_i(x)$ is not a constant polynomial. Then the critical points of $f_{\lambda}$ vary with $\lambda$. In particular, there exists a critical point $x_{\lambda}$ of maximum absolute value such that $|x_{\lambda}|\gg |\lambda|^{m_j/(d-e_j)}$ (for some $j=1,\dots, r$), where for each $i=1,\dots,r$ we have  $e_i=\deg(Q_i)\le d-2$. Now, $x_{\lambda}$ is not in the domain of analyticity of $\phi_{\lambda}$ and thus $|x_{\lambda}|\le R_{\lambda}$, which again shows that $R_{\lambda}\to\infty$ as $|\lambda|\to\infty$.
\end{proof}

Using that $R_{\lambda}\to\infty$, we will finish our proof. First we note that 
\begin{equation}
\label{justification}
|\phi_{\lambda}(f_{\lambda}(x_0))|=e^{G^{\lambda}(f_{\lambda}(x_0))}
= e^{d G^{\lambda}(x_0)}=R_{\lambda}^d. 
\end{equation}
Note that $\phi_{\lambda}(z)$ is analytic on $U_{R_{\lambda}}$, while
$\log|\phi_{\lambda}(z)|$ is continuous for $|z|\ge
R_{\lambda}$. Moreover, whenever it is defined,
$G^{\lambda}(f_{\lambda}(z))=d G^{\lambda}(z)$; so, using also
\eqref{formula for the Green's function}, we obtain
\eqref{justification}.  

Now, for $|\lambda|$
sufficiently large  we have that $R_{\lambda}^{d}/2>R_{\lambda}$ (since
$R_{\lambda}\to\infty$ according to Lemma~\ref{radius is large}). So,
$U_{R_{\lambda}^d}\subset
\phi_{\lambda}\left(U_{R_{\lambda}^d/2}\right)$ (again using
\cite[Corollary 3.3]{BH88}) and thus  
\begin{equation}
\label{value for x_0}
|f_{\lambda}(x_0)|\ge \frac{R_{\lambda}^d}{2}. 
\end{equation}

{\bf Case 1.} $\deg(Q_i)=0$ for each $i$. Then $x_0=O(1)$ as noticed
in Lemma~\ref{radius is large} and thus, using \eqref{value for x_0}
we obtain that $|\lambda|^{m_r}\gg R_{\lambda}^d$.  Since $\deg(\bfc)=m\ge m_r$, we obtain  
$$|\bfc(\l)|\ge |q_m|\cdot |\lambda|^{m} - |O(\l^{m-1}) |\gg
R_{\lambda}^d> R_{\lambda},$$ if $|\lambda|$ is sufficiently large. 

{\bf Case 2.} If not all of the $Q_i$'s are constant polynomials, then we still know that
$$|x_0|\ll |\lambda|^{\max_{i=1}^{r} m_i/(d-e_i)}\ll |\lambda|^{m_r/2},$$
because $e_i\le d-2$ for each $i$. 
Therefore 
\begin{equation}
\label{technicality}
R_{\lambda}^d\ll |f_{\lambda}(x_0)|\ll |\lambda|^{dm_r/2}.
\end{equation}
On the other hand, $|\bfc(\l)|\sim |\lambda|^{m}$ and $m\ge m_r$, which yields that
$$|\bfc(\l)|\gg |\lambda|^{m} \gg R_{\lambda}^2\gg R_\l,$$
by \eqref{technicality}. 
This concludes the proof of Proposition~\ref{domain of analyticity}.
\end{proof}

Therefore for large $|\lambda|$, the point $\bfc(\l)$ is in the domain of analyticity for $\phi_{\lambda}$, which allows us to conclude that equation \eqref{crucial equation for the Green function} holds. 

 We know (see \cite{Carleson-Gamelin}) that for each $\lambda\in\C$
 and for each $z\in\C$ sufficiently large in absolute value, we have: 
\begin{equation}
\label{one}
\phi_{\lambda}(z)=z\prod_{n=0}^{\infty}
\left(\frac{f_{\lambda}^{n+1}(z)}{f_{\lambda}^n(z)^d}\right)^{\frac{1}{d^{n+1}}}, 
\end{equation}
and thus
\begin{equation}
\label{two}
\phi_{\lambda}(z)=z\prod_{n=0}^{\infty} \left(1+
  \frac{Q_0\left(f_{\l}^n(z)\right)+\sum_{i=1}^{r} Q_i\left(f_{\l}^n(z)\right)\cdot
    \l^{m_i}}{f_{\lambda}^n(z)^d}\right)^{\frac{1}{d^{n+1}}}, 
\end{equation}
where $Q_0(x):=P(x)-x^d$ is a polynomial of degree at most equal to $d-2$. 
We showed in Proposition~\ref{domain of analyticity} that
$\phi_{\lambda}(\bfc(\l))$ is well-defined; furthermore the function
$\phi_{\lambda}(\bfc(\l))/\bfc(\l)$ can be expressed
near $\infty$ as 
the above infinite product. Indeed, for each $n\in\N$, the order of
magnitude of the numerator in the $n$-th fraction from the product
appearing in \eqref{two} when we substitute $z=\bfc(\l)$ is at most  
$$|\lambda|^{m+(d-2)m d^n}\le |\lambda|^{m(d-1)d^n},$$
while the order of magnitude of the denominator is $|\lambda|^{m d^{n+1}}$. This guarantees the convergence of the product from \eqref{two} corresponding to $\phi_\l(\bfc(\l))/\bfc(\l)$. We conclude that
\begin{equation}
\label{three}
\phi_\l(\bfc(\l))\text{ is an analytic function of $\l$ (for large $\l$), and moreover}
\end{equation}
\begin{equation}
\label{three bis}
\phi_\l(\bfc(\l))=q_m \lambda^{m}+O\left(\lambda^{m-1}\right).
\end{equation}


\section{Proof of Theorem~\ref{main result}: algebraic case}
\label{proof of our main result}


We work under the hypothesis of Theorem~\ref{main result}, and we continue with the notation from the previous Sections. Furthermore we prove Theorem~\ref{main result} under the extra assumptions that
\begin{equation}
\label{algebraic assumption}
\bfa,\bfb\in\Qbar[\l]\text{ and also, for each $i=0,\dots,d-2$, }c_i\in\Qbar[\l]. 
\end{equation}

Recall that 
$f_\l(x)= x^d + \sum_{i=0}^{d-2} c_i(\l) x^i$ where we require that
$c_i\in \Qbar[\l]$ for $i = 0, \ldots, d-2.$ Let $\bfa,
\bfb \in \Qbar[\l]$ satisfying the hypothesis (i)-(ii) of Theorem~\ref{main result}.  
Let $K$ be the number field generated by the
coefficients of $c_i(\l)$ for $i=0, \dots, d-2$, and of $\bfa(\l)$ and $\bfb(\l)$.  
Let $\Omega_K$ be the set of all
inequivalent absolute values on $K$.

Next, assume there exist infinitely many $\lambda$ such that both $\bfa(\l)$
and $\bfb(\l)$ are preperiodic for $f_{\lambda}$.  At the expense of replacing $\bfa(\l)$ by $f_{\l}^k(\bfa(\l))$ and replacing $\bfb(\l)$ by $f_{\l}^{\ell}(\bfb(\l))$ we may assume that the polynomials
\begin{equation}
\label{condition required}
\text{$\bfa(\l)$ and $\bfb(\l)$ have the same leading coefficient, and the same degree $m\ge m_r$.}
\end{equation}

Let $h_{\M_{\bfa}}(z)$  
($h_{\M_{\bfb}}(z)$) be the 
height of $z\in \Kbar$ relative to the adelic generalized Mandelbrot set
$\M_{\bfa} := \prod_{v\in \Omega_K} M_{\bfa,v}$ ($\M_{\bfb}$) as
defined in Section~\ref{Berkovich}. Note that if $\lambda \in \Kbar$
is a parameter such that $\bfa(\l)$ 
(and $\bfb(\l)$) is preperiodic for $f_{\lambda},$ then
$h_{\M_{\bfa}}(\lambda)=0$ by Corollary~\ref{second important remark}.
So, we may apply the
equidistribution result from \cite[Theorem 7.52]{Baker-Rumely}
(see our Theorem~\ref{thm:equidistribution}) and 
conclude that $M_{\bfa,v}=M_{\bfb,v}$ for each place $v\in \Omega_K$.
Indeed, we know that there exists an infinite sequence
$\{\l_n\}_{n\in\N}$ of distinct numbers $\l\in\Kbar$ such that both
$\bfa(\l)$ and $\bfb(\l)$ are preperiodic for $f_\l$. So, for each $n\in\N$, we may
take $S_n$ be the union of the sets of Galois conjugates for $\l_m$
for all $1\le m\le n$. Clearly $\#S_n\to\infty$ as $n\to\infty$, and
also each $S_n$ is $\Gal(\Kbar/K)$-invariant. Finally,
$h_{\M_{\bfa}}(S_n)=h_{\M_{\bfb}}(S_n)=0$ for all $n\in\N$, and thus
Theorem~\ref{thm:equidistribution} applies in this case.

Now, let $|\cdot |=|\cdot|_v$ be an archimedean absolute value in $\Omega_K$ and let $\C=\C_v$. We denote by $M_{a}:=M_{\bfa,v}$ and $M_{b}:=M_{\bfb,v}$ the corresponding Mandelbrot sets; then $M_{a}=M_{b}$ and also the corresponding Green's functions are the same, i.e. (using \eqref{crucial equation for the Green function} and \eqref{condition required})
$$\left|\phi_{\lambda}(\bfa(\l))\right|=
\left|\phi_{\lambda}(\bfb(\l))\right|  \text{ for
  all $|\lambda|$ sufficiently large.}$$ 
On the other hand, for $|z|$ large, the function
$h(z):=\phi_z(\bfa(z))/\phi_z(\bfb(z))$ is an
analytic function of constant absolute value (note that the
denominator does not vanish since $\phi_\l$ is a homeomorphism for a
neighborhood of $\infty$).    
 By the Open Mapping Theorem, we conclude that $h(z):=u$ is a constant (for some $u\in\C$ of absolute value equal to $1$); i.e.,
\begin{equation}
\label{constant quotient}
\phi_{\lambda}(\bfa(\l))=u\cdot \phi_{\lambda}(\bfb(\l)).
\end{equation}
Using \eqref{three} and \eqref{three bis} (also note that $\bfa(\l)$ and $\bfb(\l)$ have the same leading coefficient), we conclude that $u=1$.  Using that $\phi_{\lambda}$ is a homeomorphism on a neighborhood of the infinity, we conclude that $\bfa(\l)=\bfb(\l)$ for $\l$ sufficiently large in absolute value, and thus for \emph{all} $\l$, as desired (note that $\bfa$ and $\bfb$ are polynomials).

\begin{remark}
\label{some remark is important}
Our proof (similar to the proof from \cite{Matt-Laura}) only uses in an essential way the information that $M_a=M_b$, i.e., that the Mandelbrot sets over the complex numbers corresponding to $\bfa$ and $\bfb$ are equal, even though we know that $M_{\bfa,v}=M_{\bfb,v}$ for \emph{all} places $v$.
\end{remark}

\section{Proof of Theorem~\ref{main result}: the converse implication}
\label{converse}


Now we prove the converse implication in Theorem~\ref{main result} in the general case, i.e. for polynomials $c_0,\dots,c_{d-2}$, $\bfa$, and $\bfb$ with arbitrary complex coefficients. Again at the expense of replacing $\bfa(\l)$ by $f_{\l}^k(\bfa(\l))$ and replacing $\bfb(\l)$ by $f_{\l}^{\ell}(\bfb(\l))$ we may assume $\bfa(\l)=\bfb(\l)$.  The following result will finish the converse statement in Theorem~\ref{main result}.

\begin{prop}
\label{infinitely many preperiodic} 
Let $\bfc\in\C[\l]$ of degree $m\ge m_r$. Let ${\rm Prep}(\bfc)$ be the set consisting of all $\lambda\in\C$ such that $\bfc(\l)$ is preperiodic under $f_{\lambda}$, and let $M_{c}$ be the set of all $\l\in\C$ such that the orbit of $\bfc(\l)$ under the action of $f_{\l}$ is bounded with respect to the usual archimedean metric on $\C$.  
Then the closure in $\C$ of the set ${\rm Prep}(\bfc)$ contains $\partial M_c$. In particular, ${\rm Prep}(\bfc)$ is infinite. 
\end{prop}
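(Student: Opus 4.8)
The plan is to run the classical Montel--normality argument showing that parameters at which the marked orbit lands on a repelling cycle (Misiurewicz-type parameters) are dense in the boundary of the generalized Mandelbrot set; this is the exact analogue, for the family $f_\lambda$, of the density of Misiurewicz parameters in $\partial M$ for the usual Mandelbrot set, and it is the same strategy as in \cite{Matt-Laura}. Write $g_n(\lambda):=f_\lambda^n(\bfc(\lambda))$, which is a polynomial in $\lambda$ (of degree $md^n$ by Lemma~\ref{lem:degree in l}, though only its analyticity in $\lambda$ is used here). Fix $\lambda_0\in\partial M_c$; I will show that every neighborhood of $\lambda_0$ contains a point of $\Prep(\bfc)$, so that $\partial M_c\subseteq\overline{\Prep(\bfc)}$, and then deduce that $\Prep(\bfc)$ is infinite.

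The first step is to show that $\{g_n\}_{n\ge 0}$ is \emph{not} a normal family at $\lambda_0$. Since $M_c$ is compact (Proposition~\ref{prop:bounded mandelbrot}), $\partial M_c\subseteq M_c$, so $\bigl(g_n(\lambda_0)\bigr)_n$ is a bounded sequence in $\C$. On the other hand $\partial M_c=\partial(\C\setminus M_c)$, so every neighborhood $V$ of $\lambda_0$ meets $\C\setminus M_c$, and for $\lambda\in V\setminus M_c$ the orbit of $\bfc(\lambda)$ under $f_\lambda$ is unbounded, hence $|g_n(\lambda)|\to\infty$ (an unbounded forward orbit of a polynomial escapes to $\infty$). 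If $\{g_n\}$ were normal on some disk $V$ around $\lambda_0$, a subsequence $g_{n_k}$ would converge locally uniformly on $V$ to a holomorphic map $h\colon V\to\widehat{\C}$; then $h\equiv\infty$ on the nonempty open set $V\setminus M_c$, hence $h\equiv\infty$ on $V$ (the poles of a meromorphic function are isolated), forcing $g_{n_k}(\lambda_0)\to\infty$ and contradicting the boundedness of $\bigl(g_n(\lambda_0)\bigr)_n$.

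Next I would produce the target cycles. The polynomial $f_{\lambda_0}$ has degree $d\ge 2$, so its Julia set is infinite and contains a dense set of repelling periodic points; pick two distinct repelling periodic points $\beta_1^0,\beta_2^0$ of $f_{\lambda_0}$, of periods $p_1,p_2$. Each $\beta_i^0$ is a simple root of $f_{\lambda_0}^{p_i}(x)-x$ (its multiplier has modulus $>1\ne 1$), so by the implicit function theorem there are holomorphic functions $\beta_i(\lambda)$ on a neighborhood of $\lambda_0$ with $\beta_i(\lambda_0)=\beta_i^0$ and $f_\lambda^{p_i}(\beta_i(\lambda))=\beta_i(\lambda)$; shrinking, $\beta_1(\lambda)\ne\beta_2(\lambda)$ there. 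Now fix any neighborhood $V$ of $\lambda_0$ inside this common domain and set $h_n:=(g_n-\beta_1)/(\beta_2-\beta_1)\colon V\to\C$. If some $h_n$ takes the value $0$ or $1$ at some $\lambda\in V$, then $f_\lambda^n(\bfc(\lambda))\in\{\beta_1(\lambda),\beta_2(\lambda)\}$ is a periodic point of $f_\lambda$, so $\bfc(\lambda)$ is preperiodic and $\lambda\in\Prep(\bfc)\cap V$, as desired. If not, $\{h_n\}$ omits the values $0$ and $1$ on $V$, hence is normal on $V$ by Montel's theorem; since $g_n=\beta_1+(\beta_2-\beta_1)h_n$ with $\beta_1,\beta_2-\beta_1$ holomorphic and $\beta_2-\beta_1$ nonvanishing, $\{g_n\}$ would then be normal on $V$ as well, contradicting the previous step (which shows $\{g_n\}$ is not normal on \emph{any} neighborhood of $\lambda_0$). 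Therefore $V\cap\Prep(\bfc)\ne\emptyset$.

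Letting $V$ shrink gives $\lambda_0\in\overline{\Prep(\bfc)}$, and since $\lambda_0\in\partial M_c$ was arbitrary we get $\partial M_c\subseteq\overline{\Prep(\bfc)}$. Finally, $M_c$ has logarithmic capacity $|q_m|_v^{-1/m}>0$ (Theorem~\ref{mandelbrot capacity}), so $M_c$, and hence $\partial M_c$ (the support of its equilibrium measure), is infinite; thus $\overline{\Prep(\bfc)}$ is infinite, and since a finite set is closed, $\Prep(\bfc)$ itself is infinite. I expect the only genuinely delicate point to be the non-normality assertion of the second step --- i.e.\ translating the dynamical description of $M_c$ and of its boundary into failure of normality of $\{g_n\}$ --- while the Montel and implicit-function-theorem steps are routine; I also note that this argument is uniform in whether or not $\{f_\lambda\}_\lambda$ is a constant family.
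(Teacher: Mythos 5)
Your argument is correct and follows the same Montel-theoretic strategy as the paper, but with a different choice of the three omitted ``moving target'' functions. The paper applies the moving-targets form of Montel's theorem (Beardon, Theorem~3.3.6) directly to $\{g_{\bfc,n}\}_{n\ge 2}$ with targets $h_1\equiv\infty$, $h_2(z)=\bfc(z)$, $h_3(z)=f_z(\bfc(z))$, after shrinking $U$ so that their images have disjoint closures; the finitely many boundary parameters with $h_2(z)=h_3(z)$ are disposed of separately by noting they are automatically in $\Prep(\bfc)$. A value of $g_{\bfc,n}(z)$ equal to $\bfc(z)$ or $f_z(\bfc(z))$ then directly exhibits $\bfc(z)$ as periodic or preperiodic after one step. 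You instead locally follow two repelling cycles of $f_{\lambda_0}$ via the implicit function theorem and use the resulting sections $\beta_1,\beta_2$ as targets, reducing to the classical Montel theorem with the fixed omitted values $\{0,1,\infty\}$ after an affine rescaling by $\beta_2-\beta_1$. The paper's choice avoids the IFT and the appeal to density of repelling periodic points, and the separation of targets reduces to elementary polynomial degree counting; your choice avoids the moving-target formulation of Montel and the exceptional-parameter bookkeeping. You also supply the justification that $\{g_{\bfc,n}\}$ fails to be normal at every $\lambda_0\in\partial M_c$ --- the locally uniform limit of a subsequence would have to be $\equiv\infty$ on the nonempty open set $V\setminus M_c$, hence on all of $V$, contradicting boundedness of the orbit at $\lambda_0\in M_c$ --- a point the paper's proof asserts without elaboration. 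Both arguments are sound; the choices of targets are interchangeable and lead to the same conclusion.
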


\begin{proof}
We first claim that the equation $f_{z}(\bfc(z))=\bfc(z)$ has only finitely many solutions. Indeed, according to Lemma~\ref{lem:degree in l}, the degree in $z$ of $f_z(\bfc(z))-\bfc(z)$ is $dm$, which means that there are at most $dm$ solutions $z\in\C$ for the equation $f_z(\bfc(z))=\bfc(z)$.

Now, let $h_i:U\lra \mathbb{P}^1(\C)$ for $i=1,2,3$ be three analytic functions with values taken in the compact Riemann sphere, given by:
$$h_1(z):=\infty\text{; }h_2(z):=\bfc(z)\text{ and }h_3(z):=g_{\bfc,1}(z)=f_z(\bfc(z)).$$

Let $x_0\in\partial M_{c}$ which is \emph{not} a solution $z$ to $f_z(\bfc(z))=\bfc(z)$; we will show that $x_0$ is contained in the closure in $\C$ of ${\rm Prep}(\bfc)$. Since we already know that if $f_{z}(\bfc(z))=\bfc(z)$ then $ z\in{\rm Prep}(\bfc)$, we will be done once we prove that each open neighborhood $U$ of $x_0$ contains at least one point from ${\rm Prep}(\bfc)$. 

Furthermore, since $x_0$ is not a solution for the equation $h_2(z)=h_3(z)$, then we may assume (at the expense of replacing $U$ with a smaller neighborhood of $x_0$) that the closures of $h_2(U)$ and $h_3(U)$ are disjoint. 
Therefore  the
closures of  $h_1(U)$, $h_2(U)$ and $h_3(U)$ in $\mathbb{P}^1(\C)$ are all disjoint. 

As before, we let $\{g_{\bfc,n}\}_{n\ge 2}$ be the set of polynomials $g_{\bfc,n}(z):=f_{z}^n(\bfc(z))$. Since $x_0\in\partial M_{c}$, then the family of analytic maps $\{g_{\bfc,n}\}_{n\ge 2}$ is not normal on $U$. Therefore, by Montel's Theorem (see \cite[Theorem 3.3.6]{Beardon}), there exists $n\ge 2$ and there exists $z\in U$ such that $g_{\bfc,n}(z)=\bfc(z)$ or $g_{\bfc,n}(z)=f_{z}(\bfc(z))$ (clearly it cannot happen that $g_{\bfc,n}(z)=\infty$). Either way we obtain that $z\in\Prep(\bfc)$, as desired.

Since $\gamma(M_{c})>0$, we know that $M_{c}$ is an uncountable subset of $\C$, and thus its boundary is infinite; hence also ${\rm Prep}(\bfc)$ is infinite.
\end{proof}

\section{Proof of Theorem~\ref{main result}: general case}
\label{transcendental}


In this Section we finish the proof of Theorem~\ref{main result}. So, with the same notation as in Theorem~\ref{main result}, we replace $\bfa$ and $\bfb$ by $f_{\l}^k(\bfa(\l))$ and respectively $f_{\l}^{\ell}(\bfb(\l))$; thus $\bfa(\l)$ and $\bfb(\l)$ are polynomials of same degree and same leading coefficient. We assume there exist infinitely many $\l\in\C$ such that both $\bfa(\l)$ and $\bfb(\l)$ are preperiodic for $f_{\l}$; we will prove that $\bfa=\bfb$.

Let $K$
denote the field generated over $\Qbar$ by adjoining the coefficients of each $c_i$ (for $i=1,\dots,d-2$), and adjoining the coefficients of 
$\bfa$ and of $\bfb$. According to Corollary~\ref{always
  over the complex}, if there exists $\l\in\C$ such that $\bfa(\l)$ (or $\bfb(\l)$)
is preperiodic for $f_{\l}$, then $\l\in\overline{K}$ where
$\overline{K}$ denotes the algebraic closure of $K$ in $\C$.  
Let $\Omega_{K}$ be the set of inequivalent absolute values of $K$ corresponding to the divisors of a projective $\Qbar$-variety $\cV$ regular in codimension $1$; then the places in $\Omega_K$
satisfy a product formula.

As in Section~\ref{proof of our main result}, we let $h_{\M_\bfa}(z)$
($h_{\M_\bfb}(z)$) be the height of $z\in \Kbar$ relative to the
adelic generalized Mandelbrot set 
$\M_\bfa = \prod_{v\in \Omega_K} \bfM_{\bfa,v}$ ($\M_\bfb$) as defined in
Section~\ref{Berkovich}. Note that if $\lambda \in \overline{K}$ is a
parameter such that $\bfa(\l)$ 
is preperiodic for $f_{\lambda},$ then $h_{\M_{\bfa}}(\lambda)=0$ 
($h_{\M_{\bfb}}(\lambda)=0$ respectively) by Corollary~\ref{second
  important remark} again. So, arguing as in Section~\ref{proof of our main result}, we may apply the
equidistribution result from \cite[Theorem 7.52]{Baker-Rumely}
(Theorem~\ref{thm:equidistribution}) and 
conclude that $\bfM_{\bfa,v}=\bfM_{\bfb,v}$ for each place $v\in \Omega_K$.

As observed in our proof from Section~\ref{proof of our main result} (see Remark~\ref{some remark is important}), in order to finish the proof of Theorem~\ref{main result} it suffices to prove that $M_a=M_b$, where $M_a$ and $M_b$ are the complex Mandelbrot sets corresponding to $\bfa$ and respectively, $\bfb$.  

As before, we denote by $\Prep(\bfa)$ and $\Prep(\bfb)$ the sets of
all $\l\in\C$ such that $\bfa(\l)$ (respectively $\bfb(\l)$) is
preperiodic for $f_{\l}$. As proved in Corollary~\ref{always over the
  complex} we know that both $\Prep(\bfa)$ and $\Prep(\bfb)$ are
subsets of $\Kbar$. In order to prove that $M_a=M_b$ it suffices to
prove that $\Prep(\bfa)$ differs from $\Prep(\bfb)$ in at most
finitely many points. To ease the notation, we denote the symmetric
difference of $\Prep(\bfa)$ and $\Prep(\bfb)$ by the following:
$$
\Prepd(\bfa, \bfb) := 
\left(\Prep(\bfa)\setminus\Prep(\bfb)\right)\cup\left(\Prep(\bfb)\setminus\Prep(\bfa)\right).
$$

\begin{prop}
\label{at most finitely many bad preperiodic}
If the set $\Prepd(\bfa,\bfb)$ is finite, then $M_a=M_b$.
\end{prop}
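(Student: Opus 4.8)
The plan is to deduce from the finiteness of $\Prepd(\bfa,\bfb)$ an inclusion of boundaries $\partial M_a\subseteq M_b\cup\Prepd(\bfa,\bfb)$ (and, symmetrically, $\partial M_b\subseteq M_a\cup\Prepd(\bfa,\bfb)$), and then bootstrap this to the full equality $M_a=M_b$ using elementary potential theory for the complex Green's functions $G_a:=G_{\bfa,v}$ and $G_b:=G_{\bfb,v}$ (an archimedean $v$ being fixed throughout). The first, purely topological, step is this: if $A,B\subseteq\C$ satisfy $\#(A\triangle B)<\infty$, then $\#(\overline A\triangle\overline B)<\infty$, because a point of $\overline A\setminus\overline B$ has an open neighborhood disjoint from the closed set $\overline B$, hence meeting $A$ only inside the finite set $A\setminus B$, and a point all of whose small neighborhoods meet $A$ in a finite set of points must lie in $A$. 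Applying this with $A=\Prep(\bfa)$, $B=\Prep(\bfb)$, and using that a preperiodic point has finite (hence bounded) forward orbit, so that $\Prep(\bfa)\subseteq M_a$ and $\overline{\Prep(\bfb)}\subseteq M_b$, together with Proposition~\ref{infinitely many preperiodic} which gives $\partial M_a\subseteq\overline{\Prep(\bfa)}$, we obtain (with $F:=\Prepd(\bfa,\bfb)$)
\[
\partial M_a\ \subseteq\ \overline{\Prep(\bfa)}\ \subseteq\ \overline{\Prep(\bfb)}\cup F\ \subseteq\ M_b\cup F,
\]
and symmetrically $\partial M_b\subseteq M_a\cup F$.

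For the analytic part, I would use only the following facts established earlier: $G_a,G_b$ are continuous and nonnegative on $\C$, vanish \emph{exactly} on $M_a$, resp.\ $M_b$, are harmonic off $M_a$, resp.\ $M_b$, and $M_a,M_b$ are compact (Proposition~\ref{prop:bounded mandelbrot}). Two consequences. First, $M_a$ has no isolated points: if $p$ were isolated, $G_a$ would be a nonnegative harmonic function on a punctured disc around $p$ that is bounded there (being continuous on the full disc), hence extends harmonically across $p$, and then $G_a(p)=0$ would be an interior minimum, forcing $G_a\equiv0$ near $p$ by the minimum principle — contradicting $G_a>0$ on the punctured disc; likewise $M_b$ has no isolated points. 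Second, $\mathrm{int}(M_a)\subseteq M_b$: the set $W:=\mathrm{int}(M_a)\setminus M_b$ is open and bounded, $G_b$ is harmonic and strictly positive on $W$, and $\partial W\subseteq\partial(\mathrm{int}\,M_a)\cup\partial M_b\subseteq\partial M_a\cup M_b\subseteq M_b\cup F$ by Step~1; thus $G_b$ is a bounded harmonic function on $W$ vanishing on $\partial W$ off the finite set $F$, so $G_b\le 0$ on $W$ by the maximum principle with a removable (polar) exceptional boundary set (cf.\ \cite{Ransford}), whence $W=\emptyset$. Symmetrically $\mathrm{int}(M_b)\subseteq M_a$.

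Combining everything, $M_a=\mathrm{int}(M_a)\cup\partial M_a\subseteq M_b\cup F$, so $M_a\setminus M_b$ is finite; but if $p\in M_a\setminus M_b$, then, $M_b$ being closed and $M_a\setminus M_b\subseteq F$ finite, $p$ has a neighborhood meeting $M_a$ only in $\{p\}$, contradicting the absence of isolated points in $M_a$. Hence $M_a\subseteq M_b$, and by symmetry $M_b\subseteq M_a$, giving $M_a=M_b$. I expect the one real obstacle to be the second bullet of the analytic step — controlling $G_b$ near the finitely many ``bad'' boundary points — and it is precisely because the exceptional set has been cut down to a \emph{finite} (hence polar, hence removable) set that the maximum-principle argument closes; everything else is bookkeeping with closures and with the elementary structure of compact sets having no isolated points.
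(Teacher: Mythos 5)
Your proof is correct, and it fills in details that the paper's own (very terse) proof merely asserts. The high-level skeleton is the same in both: you use Proposition~\ref{infinitely many preperiodic} to get $\partial M_a\subseteq\overline{\Prep(\bfa)}$, the inclusion $\Prep(\bfa)\subseteq M_a$, and the finiteness of $\Prepd(\bfa,\bfb)$, and then argue that this forces $M_a=M_b$. The difference lies in how the last implication is carried out. The paper's proof consists of a single sentence invoking the \emph{fullness} of $M_a$ and $M_b$ (derived from the Maximum Modulus Principle, i.e., $\C\setminus M_a$ is connected) and then declaring the conclusion; all of the bookkeeping with closures, the control of the finitely many ``bad'' boundary points, and the treatment of potential isolated points is left to the reader. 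Your write-up replaces the appeal to fullness with the equivalent Green's-function datum that $G_a,G_b$ are continuous, subharmonic, and vanish \emph{exactly} on $M_a$, resp.\ $M_b$ (this already encodes fullness, since otherwise the Green's function would also vanish on the bounded components of the complement), and then makes the deduction via two potential-theoretic lemmas: removability of an isolated singularity of a bounded harmonic function together with the strong minimum principle (to rule out isolated points of $M_a$, $M_b$), and the Extended Maximum Principle with a polar exceptional boundary set (to show $\mathrm{int}(M_a)\subseteq M_b$). Both of these are standard and are exactly the sort of statements Ransford's book is cited for; the isolated-points lemma in particular is a genuine point that the paper does not address and which does not follow from fullness alone (a full compact set of positive capacity can have isolated points in general), so its inclusion is a real improvement in rigor. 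One small remark: the inclusion $\mathrm{int}(M_a)\setminus M_b=\emptyset$ can alternatively be deduced purely topologically from fullness of $M_b$ (the unbounded connected set $\C\setminus(M_b\cup F)$ misses $\partial W$, hence misses $W$, forcing the open set $W$ to lie in the finite set $F$), which is presumably the shortcut the paper had in mind; but your route via the Extended Maximum Principle is equally valid, and the no-isolated-points step still needs the Green's function argument in either case.
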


\begin{proof}
Since $M_a$ contains all
points $\lambda\in\C$ such that $\lim_{n\to\infty}
\frac{\log^+|f_{\lambda}^n(a)|}{d^n}=0$, the Maximum Modulus Principle
yields that the complement of $M_a$ in $\C$ is connected; i.e., $M_a$
is a full subset of $\C$ (see also \cite{Matt-Laura}). So both
$M_a$ and $M_b$ are full subsets of $\C$ containing the sets
$\Prep(\bfa)$ and  $\Prep(\bfb)$ respectively whose closures contain
the boundary of $M_{a}$ and respectively, of $M_{b}$ (according to
Proposition~\ref{infinitely many preperiodic}). As $\Prep(\bfa)$ and
$\Prep(\bfb)$ differ by at most finitely many elements,  we conclude that $M_{a}=M_{b}$.
\end{proof}

In order to prove that $\Prep(\bfa)$ and $\Prep(\bfb)$ differ by at
most finitely many elements, we observe first that if
$\l\in\Prep(\bfa)$, then $\hhat_{f_{\l}}(\bfa(\l))=0$ and thus
$\l^{\sigma}\in M_{\bfa,v}$ for all $v$ and all
$\sigma\in\Gal(\Kbar/K)$ (see \eqref{isotrivial thing}; note that
$\bfa(\l)^{\sigma}=\bfa\left(\l^{\sigma}\right)$ since $\bfa\in
K[x]$).  Similarly, if $\l\in\Prep(\bfb)$ then $\l^{\sigma}\in
M_{\bfb,v}$ for each place $v\in\Omega_K$ and each Galois morphism
$\sigma$. We would like to use the reverse implication, i.e.,
characterize  the elements $\Prep(\bfa)$ as the set of all
$\l\in\Kbar$ such that $\l^{\sigma}\in M_{\bfa,v}$ for each place $v$
and for each Galois morphism  $\sigma$. This is true if 
$f_\lambda$ is not isotrivial over $\Qbar$ by Benedetto's result
\cite{Rob}. In this case, $\Prep(\bfa)$ ($\Prep(\bfb)$) is exactly the
set of $\lambda\in \Kbar$ such that  $h_{\M_\bfa}(\lambda) = 0$
($h_{\M_\bfa}(\lambda) = 0$ respectively). 
However, notice that  if $f_{\l}\in\Qbar[x]$ then   
$$\l^{\sigma}\in M_{\bfa,v}\text{ for all $v\in\Omega_K$ and
  $\sigma\in\Gal(\Kbar/K)$ if and only if }\bfa(\l)\in\Qbar.$$  We see
that in this case $\Prep(\bfa)$ is strictly smaller than the set of  $\lambda\in
\Kbar$ such that  $h_{\M_\bfa}(\lambda) = 0.$ 
So, we will prove that $\Prep(\bfa)$ and $\Prep(\bfb)$ differ by at
most finitely many elements by splitting our analysis into two cases
depending on whether there exist infinitely many $\l\in\C$ such that
$f_{\l}$ is conjugate to a polynomial with coefficients in $\Qbar$.

The following easy result is key for our extension to $\C$.

\begin{lemma}
\label{not over Qbar}
For any $\lambda\in\C$, the polynomial $f_{\lambda}(x)$ is conjugate
to a polynomial with coefficients in $\Qbar$ if and only if $c_i(\l)\in \Qbar$ for each $i=1,\dots,d-2$.
\end{lemma}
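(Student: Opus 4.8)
The plan is to exploit the rigidity of conjugacies between polynomials, together with the observation that $f_{\l}$ is already in normal form. The starting point is the standard fact that any M\"obius transformation conjugating one polynomial of degree $d\ge 2$ to another must be affine: a polynomial has $\infty$ as its unique totally ramified fixed point, so $\infty$ is preserved by any such conjugacy.

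First I would reduce to a conjugacy by an affine map against a \emph{normal form} polynomial over $\Qbar$. Suppose $f_{\l}=\delta^{-1}\circ g\circ\delta$ with $g\in\Qbar[x]$ and $\delta$ affine. If $g$ has leading coefficient $c_d$ and coefficient $c_{d-1}$ of $x^{d-1}$ (both in $\Qbar$), set $\eta(x):=c_d^{-1/(d-1)}x-c_{d-1}/d$; its coefficients are algebraic, since a $(d-1)$-st root of an algebraic number is algebraic, so $\tilde g:=\eta^{-1}\circ g\circ\eta$ is a polynomial in normal form with all coefficients in $\Qbar$. Replacing $g$ by $\tilde g$ and $\delta$ by $\eta^{-1}\circ\delta$, we may assume $g(x)=x^d+\sum_{i=0}^{d-2}b_i x^i$ with every $b_i\in\Qbar$ and $f_{\l}=\delta^{-1}\circ g\circ\delta$ with $\delta(x)=\alpha x+\beta$.

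Next I would pin down $\delta$ by comparing top coefficients in the identity $g\circ\delta=\delta\circ f_{\l}$. The coefficient of $x^{d}$ gives $\alpha^{d}=\alpha$, hence $\alpha^{d-1}=1$ (note $\alpha\ne 0$); and since $f_{\l}$ has no $x^{d-1}$ term, the coefficient of $x^{d-1}$ gives $d\,\alpha^{d-1}\beta=0$, hence $\beta=0$. Thus $\delta(x)=\zeta x$ with $\zeta^{d-1}=1$, and expanding $f_{\l}(x)=\zeta^{-1}g(\zeta x)$ yields $c_i(\l)=\zeta^{\,i-1}b_i$ for $i=0,\dots,d-2$. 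In particular $c_1(\l)=b_1\in\Qbar$ (the coefficient of $x^1$ is a conjugacy invariant of a normal-form polynomial), and for every $i$ we get $c_i(\l)=\zeta^{\,i-1}b_i\in\Qbar$ because $\zeta$, being a root of unity, is algebraic; this proves the forward implication. Conversely, if all the $c_i(\l)$ lie in $\Qbar$ then $f_{\l}\in\Qbar[x]$ is (trivially) conjugate to a polynomial over $\Qbar$; and more generally, conjugating $f_{\l}$ by $x\mapsto\zeta x$ for a $(d-1)$-st root of unity $\zeta$ only multiplies each $c_i(\l)$ by $\zeta^{\,i-1}\in\Qbar$, so it changes nothing about whether one lands in $\Qbar[x]$.

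The only step requiring care is the rigidity input: one must verify that the affine maps conjugating a normal-form polynomial to a normal-form polynomial are exactly the scalings by $(d-1)$-st roots of unity. As indicated above this is just the comparison of the top two coefficients, so I do not expect a genuine difficulty; the remaining work is routine manipulation of the substitution $x\mapsto\zeta x$ and checking that the normalization $\eta$ can indeed be taken with algebraic coefficients.
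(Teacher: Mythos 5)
Your proof is correct and rests on the same key observation as the paper's: since $f_{\l}$ is already in normal form, looking at the top two coefficients of the conjugated polynomial pins down the conjugating affine map. The paper's proof is a bit more economical, though: it does not first normalize $g$ into normal form. Writing $\delta(x)=ax+b$ with $\delta^{-1}\circ f_{\l}\circ\delta\in\Qbar[x]$, the leading coefficient of $\delta^{-1}\circ f_{\l}\circ\delta$ is $a^{d-1}$ and the coefficient of $x^{d-1}$ is $d\,a^{d-2}b$ (since $f_{\l}$ has no $x^{d-1}$ term and the lower-degree pieces cannot contribute to degree $d-1$); these being algebraic forces $a\in\Qbar$ and then $b\in\Qbar$, at which point $f_{\l}=\delta\circ(\delta^{-1}\circ f_{\l}\circ\delta)\circ\delta^{-1}\in\Qbar[x]$ directly. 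Your detour through the normal form $\tilde g=\eta^{-1}\circ g\circ\eta$ and the root-of-unity classification of normal-form-to-normal-form conjugacies is valid and even makes the invariance $c_i(\l)=\zeta^{i-1}b_i$ explicit, but it is slightly longer; both arguments amount to the same top-coefficient comparison.
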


\begin{proof}
  One direction is obvious.  Now, assume $f_{\l}$ is conjugate to a
  polynomial with coefficients in $\Qbar$. Let $\delta(x) := ax + b$
  be a linear polynomial such that $\delta^{-1}\circ f_{\l}\circ
  \delta \in \Qbar[x]$. Since $f_{\l}$ is in normal form, we note that
  $a, b\in \Qbar$ for otherwise the leading coefficient or the
  next-to-leading coefficient is not algebraic. Now, it is clear that
  each $c_i(\l)\in\Qbar$ as desired.
\end{proof}

Let $S$ be the set of all $\l\in\C$ such that $f_{\l}$ is conjugate to a polynomial in $\Qbar[x]$. Using Lemma~\ref{not over Qbar}, $S\subset\Kbar$ since each polynomial $c_i$ has coefficients in $K$, and $\Qbar\subset K$. Also, $S$ is $\Gal(\Kbar/K)$-invariant since each coefficient of each $c_i$ is in $K$.

\begin{prop}
\label{finitely many lambdas bad}
We have $$\Prepd(\bfa, \bfb)\subset S.$$
\end{prop}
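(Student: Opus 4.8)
\textbf{Proof proposal for Proposition~\ref{finitely many lambdas bad}.} The goal is to show that if $\l\in\Prepd(\bfa,\bfb)$ — say, without loss of generality, $\l\in\Prep(\bfa)\setminus\Prep(\bfb)$ — then $f_\l$ must be conjugate to a polynomial with coefficients in $\Qbar$, i.e.\ $\l\in S$. The plan is to exploit the dichotomy pointed out in the discussion preceding Lemma~\ref{not over Qbar}: either $f_\l$ is isotrivial over $\Qbar$ (so $\l\in S$ and there is nothing to prove), or it is not, in which case Benedetto's theorem forces $\Prep(\bfa)$ and $\Prep(\bfb)$ to agree at $\l$, contradicting $\l\in\Prepd(\bfa,\bfb)$.

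\smallskip

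In more detail, first I would fix $\l\in\Prepd(\bfa,\bfb)$, say $\bfa(\l)$ is preperiodic for $f_\l$ but $\bfb(\l)$ is not. By Corollary~\ref{always over the complex} we have $\l\in\Kbar$, so $\l^\sigma$ makes sense for $\sigma\in\Gal(\Kbar/K)$ and, since $\bfa,\bfb,c_i\in K[x]$, we have $\bfa(\l)^\sigma=\bfa(\l^\sigma)$, $\bfb(\l)^\sigma=\bfb(\l^\sigma)$, and $f_\l^\sigma=f_{\l^\sigma}$. Because $\bfa(\l)$ is preperiodic for $f_\l$, equation~\eqref{isotrivial thing} gives $\hhat_{f_{\l^\sigma},v}(\bfa(\l^\sigma))=0$ for all $v\in\Omega_K$ and all $\sigma$; equivalently, the Galois orbit $S_\l$ of $\l$ satisfies $h_{\M_\bfa}(S_\l)=0$ by Corollary~\ref{second important remark}. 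Now suppose toward a contradiction that $\l\notin S$. By Lemma~\ref{not over Qbar}, $f_\l$ is then \emph{not} conjugate to a polynomial over $\Qbar$, i.e.\ $f_\l$ is not isotrivial as a polynomial over the function field $K$ (whose field of constants is $\Qbar$). Applying Benedetto's theorem \cite{Rob} (extended to $K$ of finite transcendence degree over $\Qbar$ as in the introduction) to $f_\l$ over $K$: a point $c\in\Kbar$ is preperiodic for $f_\l$ over $K$ if and only if $\hhat_{f_{\l^\sigma},v}(c^\sigma)=0$ for all $v$ and $\sigma$. Since $h_{\M_\bfb}=h_{\M_\bfa}$ by the equidistribution step already carried out (we proved $M_{\bfa,v}=M_{\bfb,v}$ for every $v$), and since $h_{\M_\bfa}(S_\l)=0$, we also get $h_{\M_\bfb}(S_\l)=0$, which by Corollary~\ref{second important remark} means $\hhat_{f_{\l^\sigma},v}(\bfb(\l^\sigma))=0$ for all $v$ and $\sigma$; by Benedetto this forces $\bfb(\l)$ to be preperiodic for $f_\l$, contradicting $\l\notin\Prep(\bfb)$. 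Hence $\l\in S$. The symmetric case $\l\in\Prep(\bfb)\setminus\Prep(\bfa)$ is identical with the roles of $\bfa$ and $\bfb$ interchanged.

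\smallskip

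The one subtlety I would be careful about — and which I expect to be the only real obstacle — is the precise form of Benedetto's characterization of preperiodicity via vanishing of the canonical height, because the subtlety in the surrounding discussion is exactly that for $f_\l\in\Qbar[x]$ the two notions genuinely diverge. So the argument must use, in an essential way, that we are in the non-isotrivial case: it is precisely the non-isotriviality of $f_\l$ over $K$ that makes ``$\hhat_{f_\l,v}(c^\sigma)=0$ for all $v,\sigma$'' equivalent to ``$c$ is preperiodic for $f_\l$''. I would make sure the hypotheses of Benedetto's theorem as stated in the introduction (function field of finite transcendence degree over its constant subfield, not-conjugate-to-a-polynomial-over-the-constants) are exactly what Lemma~\ref{not over Qbar} supplies once $\l\notin S$. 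Everything else is bookkeeping: relating $h_{\M_\bfa}$, $h_{\M_\bfb}$ to local canonical heights via Corollary~\ref{second important remark}, and using that $M_{\bfa,v}=M_{\bfb,v}$ for all $v$, which was established earlier in the proof of Theorem~\ref{main result}.
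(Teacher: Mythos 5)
Your proof is correct and follows essentially the same route as the paper: assume $\l\notin S$, invoke Benedetto's theorem for the non-isotrivial polynomial $f_\l$ over the function field $K$, and use $M_{\bfa,v}=M_{\bfb,v}$ (equivalently, $h_{\M_\bfa}=h_{\M_\bfb}$) together with Corollary~\ref{second important remark} and equation~\eqref{isotrivial thing} to transfer preperiodicity from $\bfa(\l)$ to $\bfb(\l)$. The paper runs the argument directly (for $\l\in\Kbar\setminus S$, show $\l\in\Prep(\bfa)\Leftrightarrow\l\in\Prep(\bfb)$) rather than by contradiction, but this is only a cosmetic difference.
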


\begin{proof}
  Let $\l\in\Kbar\setminus S$. Since $f_{\l}$ is not conjugate to a
  polynomial in $\Qbar[x]$, using Benedetto's result (see also
  \eqref{isotrivial thing}) we obtain that $\bfa(\l)$ is preperiodic
  for $f_{\lambda}$ if and only if   for each $v\in\Omega_{K}$ and
  $\sigma\in\Gal(\overline{K}/K)$, the local canonical height of
  $\bfa(\l)^{\sigma}=\bfa\left(\l^{\sigma}\right)$ computed with
  respect to $f_{\lambda}^{\sigma}$ equals $0$. Since each coefficient
  of $c_i(\l)$ is defined over $K$, we get that
  $f_{\lambda}^{\sigma}=f_{\lambda^{\sigma}}$. In conclusion, for each
  $\lambda \in \overline{K}\setminus S$, $\bfa(\lambda)$ (or
  $\bfb(\lambda)$) is preperiodic for $f_{\lambda}$ if and only if for
  each $v\in\Omega_K$ and for each $\sigma\in\Gal(\Kbar/K)$, we have
  $\lambda^{\sigma}\in M_{\bfa,v}$ (respectively $\lambda^{\sigma}\in
  M_{\bfb,v}$).  Because $M_{\bfa,v}=M_{\bfb,v}$ for all
  $v\in\Omega_K$, we conclude that for $\lambda\in\Kbar\setminus S$ then
  $\l\in \Prep(a)$ if and only if  $\l\in \Prep(b)$. Hence, 
  $\Prepd(\bfa,\bfb)\subset S$ as desired. 
\end{proof}

Next, we have the following observation. 

\begin{lemma}
  \label{lem:transcendental for bfa}
  If $\l\in S$ such that $\bfa(\l)\not\in \Qbar$ then $\bfa(\l)$ is
  not preperiodic for $f_\l$.
\end{lemma}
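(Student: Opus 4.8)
The plan is short: exploit that $\l\in S$ provides a conjugacy of $f_\l$ to a polynomial with algebraic coefficients, and that preperiodic points of such a polynomial are automatically algebraic.

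First I would unwind the definition of $S$ using (the proof of) Lemma~\ref{not over Qbar}: since $\l\in S$, there is a linear polynomial $\delta(x)=ax+b$ such that $g:=\delta^{-1}\circ f_\l\circ\delta\in\Qbar[x]$, and because $f_\l$ is in normal form one has $a\in\Qbar$, $a\ne 0$, and $b\in\Qbar$. Indeed, comparing the leading coefficient of $\delta^{-1}\circ f_\l\circ\delta$ (which is $a^{d-1}$) and its next-to-leading coefficient (which is $da^{d-2}b$) with the coefficients of $g\in\Qbar[x]$ forces $a,b\in\Qbar$, exactly as in the proof of Lemma~\ref{not over Qbar}. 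In particular both $\delta$ and $\delta^{-1}$ have algebraic coefficients.

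Next I would observe that $\bfa(\l)$ is preperiodic for $f_\l$ if and only if $\delta^{-1}(\bfa(\l))$ is preperiodic for $g$, since conjugation by $\delta$ carries forward orbits to forward orbits. But any preperiodic point $y$ of $g$ satisfies $g^s(y)=g^t(y)$ for some $s>t\ge 0$; as $\deg g=d\ge 2$, the iterates $g^s$ and $g^t$ have distinct degrees, so $g^s-g^t$ is a nonzero polynomial in $\Qbar[y]$, and hence $y\in\Qbar$. Therefore, if $\bfa(\l)$ were preperiodic for $f_\l$, we would get $\delta^{-1}(\bfa(\l))\in\Qbar$, and applying $\delta$ (which has algebraic coefficients) would give $\bfa(\l)\in\Qbar$, contradicting the hypothesis $\bfa(\l)\notin\Qbar$. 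Hence $\bfa(\l)$ is not preperiodic for $f_\l$, as claimed.

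There is essentially no obstacle here; the only point requiring mild care is that the conjugating linear polynomial has algebraic coefficients, which is precisely the content of the proof of Lemma~\ref{not over Qbar} and relies on $f_\l$ being in normal form. The identical argument applies with $\bfb$ in place of $\bfa$, which is the version actually needed in the sequel.
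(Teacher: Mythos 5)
Your proof is correct, but it takes a mildly more roundabout route than the paper's. You re-enter the proof of Lemma~\ref{not over Qbar} to extract the conjugating linear map $\delta$ with algebraic coefficients, then transport preperiodicity through $\delta$ to a polynomial $g\in\Qbar[x]$ and argue there. The paper instead uses the \emph{statement} of Lemma~\ref{not over Qbar} directly: $\l\in S$ forces $c_i(\l)\in\Qbar$ for every $i$, hence $f_\l$ itself (not merely a conjugate) lies in $\Qbar[x]$, and then one notes immediately that preperiodic points of a degree~$\ge 2$ polynomial over $\Qbar$ must be algebraic. Both approaches rest on the same elementary fact that a preperiodic point $y$ satisfies a nonconstant polynomial equation $g^s(y)=g^t(y)$ over $\Qbar$; the paper just avoids the conjugation step, which is unnecessary once you know $f_\l\in\Qbar[x]$. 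Your argument is sound, only slightly longer, and the explicit degree comparison for $g^s-g^t$ is a good detail to spell out.
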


\begin{proof}
The assertion is immediate since for $\l\in S$ we have $f_\l\in \Qbar[x]$ by
the definition of $S$, it follows that the set of preperiodic points
of $f_\l$ is contained in $\Qbar$. By assumption $\bfa(\l)\not\in \Qbar$, 
therefore $\bfa(\l)$ is not preperiodic for $f_\l$.   
\end{proof}

\begin{prop}
\label{infinitely many lambdas bad}
$\Prepd(\bfa,\bfb)$ is  a finite set.
\end{prop}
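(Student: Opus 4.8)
The plan is to prove Proposition~\ref{infinitely many lambdas bad} by contradiction, reducing the transcendental situation to the algebraic case already settled in Section~\ref{proof of our main result}. If $S$ is finite then $\Prepd(\bfa,\bfb)\subset S$ is finite by Proposition~\ref{finitely many lambdas bad}, so there is nothing to do; hence I will assume $S$ is infinite, which is automatic once one supposes $\Prepd(\bfa,\bfb)$ infinite (as $\Prepd(\bfa,\bfb)\subset S$). After possibly interchanging $\bfa$ and $\bfb$ I may assume $\Prep(\bfa)\setminus\Prep(\bfb)$ is infinite, and I will derive $\bfa=\bfb$, which forces $\Prepd(\bfa,\bfb)=\emptyset$ --- the desired contradiction.

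The first step is to show that for every $\lambda\in\Prep(\bfa)\setminus\Prep(\bfb)$ all of $c_0(\lambda),\dots,c_{d-2}(\lambda),\bfa(\lambda),\bfb(\lambda)$ lie in $\Qbar$. By Proposition~\ref{finitely many lambdas bad} such a $\lambda$ lies in $S$, so $f_\lambda\in\Qbar[x]$ and each $c_i(\lambda)\in\Qbar$; since $\bfa(\lambda)$ is preperiodic for $f_\lambda$, Lemma~\ref{lem:transcendental for bfa} gives $\bfa(\lambda)\in\Qbar$. Then the orbit of $\bfa(\lambda)$ under $f_\lambda$ is finite, hence bounded with respect to $|\cdot|_v$ for every $v\in\Omega_K$, and the same holds for every Galois conjugate $\lambda^\sigma$ (apply $\sigma$, using $f_\lambda^\sigma=f_{\lambda^\sigma}$); thus $\lambda^\sigma\in M_{\bfa,v}$ for all $v$ and $\sigma$. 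Since $M_{\bfa,v}=M_{\bfb,v}$ for every $v\in\Omega_K$ (the output of the equidistribution argument in Section~\ref{transcendental}), we get $\lambda^\sigma\in M_{\bfb,v}$ for all $v,\sigma$, i.e. $h_{\M_\bfb}(\lambda)=0$, whence $\hhat_{f_\lambda}(\bfb(\lambda))=0$ by Corollary~\ref{second important remark}. As $f_\lambda\in\Qbar[x]$ is defined over the constant field of $K$, a point $y\in\Kbar$ with $\hhat_{f_\lambda}(y)=0$ must already lie in $\Qbar$: otherwise, by the product formula, $|y|_w>1=r_w$ for some place $w$ of a finite extension $K'$ of $K$ containing $y$ and $\lambda$ (here $r_w=1$ because $f_\lambda$ is monic with the $c_i(\lambda)$ being $w$-adic units), and then $|f_\lambda^n(y)|_w\to\infty$, contradicting $\hhat_{f_\lambda,w}(y)=0$. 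Hence $\bfb(\lambda)\in\Qbar$.

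Next I would carry out the descent. The map $\lambda\mapsto\big(c_0(\lambda),\dots,c_{d-2}(\lambda),\bfa(\lambda),\bfb(\lambda)\big)$ is nonconstant (recall $\bfa$ is nonconstant after the reductions of Section~\ref{transcendental}), so the Zariski closure $\mathcal{C}\subset\mathbb{A}^{d+1}$ of its image is an irreducible curve; by the previous step $\mathcal{C}$ contains infinitely many $\Qbar$-rational points, hence is defined over $\Qbar$. Being the closure of the image of $\mathbb{A}^1$ under a polynomial map, $\mathcal{C}$ is rational with a single place at infinity, and its coordinate functions pull back to polynomials on the normalization; concretely there are $\tilde c_0,\dots,\tilde c_{d-2},\tilde\bfa,\tilde\bfb\in\Qbar[\mu]$ and a nonconstant $\rho\in K[\lambda]$ with $c_i=\tilde c_i\circ\rho$, $\bfa=\tilde\bfa\circ\rho$ and $\bfb=\tilde\bfb\circ\rho$. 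Putting $\tilde f_\mu(x):=x^d+\sum_{i=0}^{d-2}\tilde c_i(\mu)x^i\in\Qbar[\mu][x]$ we have $f_\lambda=\tilde f_{\rho(\lambda)}$, and a routine computation (essentially dividing the degree relations by $\deg\rho$) shows that $\tilde\bfa,\tilde\bfb$ have the same degree and leading coefficient and that $\deg\tilde\bfa$ is at least the largest exponent of $\mu$ occurring in $\tilde f_\mu$; that is, $\tilde f,\tilde\bfa,\tilde\bfb$ satisfy conditions (i)--(ii) of Theorem~\ref{main result} with $k=\ell=0$.

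Finally I would feed this back into the algebraic case. By hypothesis there are infinitely many $\lambda\in\C$ with $\bfa(\lambda)$ and $\bfb(\lambda)$ both preperiodic for $f_\lambda$; for each such $\lambda$ the points $\tilde\bfa(\rho(\lambda))$ and $\tilde\bfb(\rho(\lambda))$ are both preperiodic for $\tilde f_{\rho(\lambda)}$, and since $\rho$ is a nonconstant polynomial the parameters $\mu=\rho(\lambda)$ form an infinite set. Applying the (already proven) algebraic case of Theorem~\ref{main result} to $\tilde f,\tilde\bfa,\tilde\bfb\in\Qbar[\mu]$ with $k=\ell=0$ yields $\tilde\bfa=\tilde\bfb$, hence $\bfa=\tilde\bfa\circ\rho=\tilde\bfb\circ\rho=\bfb$, so $\Prepd(\bfa,\bfb)=\emptyset$, contradicting our assumption. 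I expect the main obstacle to be the descent: making precise that $\mathcal{C}$ defined over $\Qbar$ forces factorizations $c_i=\tilde c_i\circ\rho$, $\bfa=\tilde\bfa\circ\rho$, $\bfb=\tilde\bfb\circ\rho$ with the outer polynomials over $\Qbar$ and a common inner $\rho$ over $K$, and then checking that hypotheses (i)--(ii) genuinely descend; the extraction of $\bfb(\lambda)\in\Qbar$ from the equality of Mandelbrot sets together with the isotriviality of $f_\lambda$ is the other delicate point, and it is precisely what makes the case distinction "$S$ finite versus $S$ infinite" necessary in the overall strategy.
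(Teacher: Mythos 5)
Your argument is correct, and it uses the same essential tools as the paper: the observation (Lemma~\ref{lem:transcendental for bfa}) that for $\l\in S$ preperiodicity of $\bfa(\l)$ forces $\bfa(\l)\in\Qbar$; the equality $M_{\bfa,v}=M_{\bfb,v}$ at every place coupled with the product formula to deduce $\bfb(\l)\in\Qbar$ as well; the descent lemma (Lemma~\ref{algebraic geometry}) to factor $c_i,\bfa,\bfb$ through a common inner polynomial $\rho$ with outer polynomials over $\Qbar$; and the already-proved algebraic case to conclude $\tilde\bfa=\tilde\bfb$. Where you genuinely diverge is in the logical organization: the paper, after reducing to $S$ infinite, splits into two cases (Lemmas~\ref{infinitely many bad lambdas for bfa} and~\ref{finitely many bad lambdas for bfa}) according to whether there are infinitely many $\l\in S$ with $\bfa(\l)\in\Qbar$, and the second case requires a separate finiteness argument via the set $T$. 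By running the whole thing as a proof by contradiction (assume $\Prepd(\bfa,\bfb)$ infinite), you automatically land in the favorable case --- for each $\l\in\Prep(\bfa)\setminus\Prep(\bfb)$ all the relevant values lie in $\Qbar$ --- so the dichotomy evaporates and you only need to carry out the descent. This is a mild streamlining: it does not avoid any of the mathematical content, but it collapses the paper's two-lemma case analysis into a single strand. One small point worth making explicit in a final write-up: your reduction from $\hhat_{f_\l}(\bfb(\l))=0$ to $\bfb(\l)\in\Qbar$ uses that every $c_i(\l)$ lies in the constant field $\Qbar$ (hence has $w$-adic absolute value exactly $1$ or $0$ at every function-field place $w$), so that $r_w\le 1$ in the sense of \eqref{definition N_v}; this is precisely the isotriviality hypothesis the paper is leaning on at the corresponding spot, and your formulation of it is right.
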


\begin{proof}
  If $S$ is a finite set, then the assertion follows from
  Proposition~\ref{finitely many lambdas bad}. So, in the remaining
  part of the proof, we assume that $S$ is an infinite set. 
By Lemma~\ref{not over Qbar} we know that there exist infinitely many $\l\in\Kbar$ such that $c_i(\l)\in\Qbar$ for each $i=0,\dots, d-2$. 
The following Lemma will be key for our proof.
\begin{lemma}
\label{algebraic geometry}
Let $L_1\subset L_2$ be algebraically closed fields of characteristic $0$, and let $f_1,\dots,f_n\in L_2[x]$. If there exist infinitely many $z\in L_2$ such that $f_i(z)\in L_1$ for each $i=1,\dots,n$, then there exists $h\in L_2[x]$, and there exist $g_1,\dots,g_n\in L_1[x]$ such that $f_i=g_i\circ h$ for each $i=1,\dots, n$. 
\end{lemma}

\begin{proof}
Let $C\subset \bA^n$ be the Zariski closure of the set
\begin{equation}
\label{parametrization 1}
\{(f_1(z),\cdots, f_n(z))\text{ : }z\in L_2\}.
\end{equation}
Then $C$ is a rational curve which (by our hypothesis) contains infinitely many points over $L_1$. Therefore $C$ is defined over $L_1$, and thus it has a rational parametrization over $L_1$. Let 
$$(g_1,\dots,g_n):\bA^1\lra C$$
be a birational isomorphism defined over $L_1$; we denote by $\psi:C\lra \bA^1$ its inverse (for more details see \cite[Chapter 1]{Shafarevich}).    
Since the closure of $C$ in $\bP^n$ (by considering the usual embedding of $\bA^n\subset \bP^n$) has only one point at infinity (due to the  parametrization \eqref{parametrization 1} of $C$), we conclude that (perhaps after  a change of coordinates) we may assume each $g_i$ is also a polynomial; more precisely, $g_i\in L_1[x]$. We let $h:\bA^1\lra \bA^1$ be the rational map (defined over $L_2$) given by the composition
$$h:=\psi\circ (f_1,\dots,f_n).$$
Therefore, for each $i=1,\dots, n$, we have $f_i=g_i\circ h$, and since both $f_i$ and $g_i$ are polynomials, we conclude that also $h$ is a polynomial, as desired.
\end{proof}
As an immediate consequence of Lemma~\ref{algebraic geometry}, we have
the following result.
\begin{cor}
\label{cor: algebraic geometry}
Under the hypothesis from Lemma~\ref{algebraic geometry}, for each $i,j\in\{1,\dots,n\}$ and for each $z\in L_2$ we have that $f_i(z)\in L_1$ if and only if $f_j(z)\in L_1$ (if and only if $h(z)\in L_1$).
\end{cor}

There are two possibilities: either there exist infinitely many $\l\in S$ such that $\bfa(\l)\in \Qbar$, or not.

\begin{lemma}
\label{infinitely many bad lambdas for bfa}
If there exist infinitely many $\l\in S$ such that $\bfa(\l)\in \Qbar$, then $\bfa=\bfb$. In particular,  $\Prep(\bfa)=\Prep(\bfb)$.
\end{lemma}

\begin{proof}
Using Corollary~\ref{cor: algebraic geometry} we obtain that actually for \emph{all} $\l\in S$ we have that $\bfa(\l)\in\Qbar$. So, in this case \emph{each} $\l^{\sigma}$ belongs to each $M_{\bfa,v}$ for each place $v$ of the function field $K/\Qbar$ and for each $\sigma\in\Gal(\Kbar/K)$ (note that for such $\l\in S$ we have that both $f_{\l}\in\Qbar[x]$ and $\bfa(\l)\in\Qbar$, and also note that $S$ is $\Gal(\Kbar/K)$-invariant). Since $M_{\bfa,v}=M_{\bfb,v}$ for each place $v$, we conclude that $\l^{\sigma}\in M_{\bfb,v}$ for each $\l\in S$, for each $v\in\Omega_K$ and for each $\sigma\in\Gal(\Kbar/K)$. Since $f_{\l}\in\Qbar[x]$, we conclude that $\bfb(\l)\in\Qbar$ as well. Hence both $\bfa(\l)\in\Qbar$ and $\bfb(\l)\in\Qbar$ for $\l\in S$. 

Therefore, applying Lemma~\ref{algebraic geometry} to the polynomials $c_0,\dots,c_{d-2}$, $\bfa$ and $\bfb$, we conclude that there exist polynomials  polynomials $c'_0,\dots,c'_{d-2}, \bfa ',\bfb ' \in \Qbar[x]$ and $h\in \Kbar[x]$ such that 
\begin{equation}
\label{rel-one}
c_i=c'_i\circ h\text{ for each $i=0,\dots,d-2$, and}
\end{equation}
\begin{equation}
\label{rel-two}
\bfa = \bfa '\circ h\text{ and }\bfb = \bfb '\circ h.
\end{equation}
We let $\delta:=h(\l)$, and define the family of polynomials
$$f'_{\delta}(x):=x^d+\sum_{i=0}^{d-2} c'_i(\delta) x^i.$$
So,  we reduced the problem to the case studied in Section~\ref{proof of our main result} for the family of polynomials $f_{\delta}'\in\Qbar[x]$ and to the starting points $\bfa ', \bfb '\in\Qbar[\delta]$. Note that using hypothesis (i)-(ii) from Theorem~\ref{main result}, and also relations \eqref{rel-one} and \eqref{rel-two}, $\bfa '(\delta)$ and $\bfb '(\delta)$  have the same leading coefficient and the same degree which is larger than the degrees of the $c_i'$'s. 
So, since we know there exist infinitely many $\delta\in\C$ such that $\bfa '(\delta)$ and $\bfb '(\delta)$ are both preperiodic for $f'_{\delta}$ we conclude that $\bfa ' = \bfb '$, as proved in Section~\ref{proof of our main result}. Hence $\bfa=\bfb$ and thus $\Prep(\bfa)=\Prep(\bfb)$.
\end{proof}

\begin{lemma}
\label{finitely many bad lambdas for bfa}
If there exist finitely many $\l\in S$ such that $\bfa(\l)\in \Qbar$, then   $\Prepd(\bfa,\bfb)$ is finite.
\end{lemma}

\begin{proof}
We observe that there exist also at most finitely many $\l\in S$ such
that $\bfb(\l)\in\Qbar$. Otherwise, arguing as in the proof of
Lemma~\ref{infinitely many bad lambdas for bfa}, we would obtain that
for all (the infinitely many) $\l\in S$ both $\bfa(\l)$ and $\bfb(\l)$
are in $\Qbar$, thus contradicting the hypothesis of our Lemma. So,
let $T$ be the finite subset of $S$ containing all $\l$ such that
either $\bfa(\l)\in\Qbar$ or $\bfb(\l)\in\Qbar$.

Let $\l\in\Kbar\setminus T$ be such that $\l\in \Prep(\bfa).$ Then,
either $\l\not\in S$ or $\l\in S.$ 
If $\l\not\in S$ then by Lemma~\ref{finitely many lambdas bad} we also
  have $\l\in \Prep(\bfb)$. On the other hand, if $\l\in S\setminus T$
  then by Lemma~\ref{lem:transcendental for bfa} we know that
  $\l\not\in\Prep(\bfa)$, contradicting the choice of $\l.$ In
  conclusion, we find that for $\l\in\Kbar\setminus T$ we have $\l\in
  \Prep(\bfa)$ if and only if $\l\in \Prep(\bfb)$. Therefore,
  $\Prepd(\bfa,\bfb) \subset T$. Since $T$ is finite, we conclude our proof for Lemma~\ref{finitely many bad lambdas for bfa}.




\end{proof}
Lemmas~\ref{infinitely many bad lambdas for bfa} and \ref{finitely many bad lambdas for bfa} finish the proof of Proposition~\ref{infinitely many lambdas bad}.
\end{proof}

Therefore  Proposition~\ref{infinitely many lambdas bad} yields that $\Prep(\bfa)$ and $\Prep(\bfb)$ differ by at most finitely many elements. 
Then it follows from Proposition~\ref{at most finitely many bad preperiodic}  that the corresponding complex Mandelbrot sets $M_a$ and $M_b$ are equal, and so we conclude our proof of Theorem~\ref{main result} as in our proof from Section~\ref{proof of our
main result} using the equality between the two Green's functions for $M_a$ and respectively $M_b$.


\section{Connections to the dynamical Manin-Mumford conjecture}
\label{smc}


We first prove Corollary~\ref{small points corollary} and then we present further connections between our Question~\ref{main conjecture} and the Dynamical Manin-Mumford Conjecture  formulated by Ghioca, Tucker, and Zhang in \cite{IMRN}.

\begin{proof}[Proof of Corollary~\ref{small points corollary}.]
At the expense of replacing $f$ by a conjugate $\delta^{-1}\circ f\circ \delta$, and replacing $\bfa$ (resp. $\bfb$) by $\delta^{-1}\circ\bfa$ (resp. $\delta^{-1}\circ\bfb$) we may assume $f$ is in normal form. By the hypothesis of Corollary~\ref{small points corollary} we know that there are infinitely many $\l_n\in\Qbar$ such that 
$$\lim_{n\to\infty}\hhat_{f}(\bfa(\l_n))+\hhat_{f}(\bfb(\l_n))=0.$$

We let $\bff:=f_{\l}:=f$ be the constant family of polynomials $f$ indexed by $\l\in\Qbar$.  
As before, we let $K$ be the field generated by coefficients of $f$,
$\bfa$ and $\bfb$ and let $h_{\M_{\bfa}}(z)$  
($h_{\M_{\bfb}}(z)$) be the 
height of $z\in \Kbar$ relative to the adelic generalized Mandelbrot set
$\M_{\bfa} := \prod_{v\in \Omega_K} M_{\bfa,v}$ ($\M_{\bfb}$) as defined in Section~\ref{Berkovich}. So, we may apply the
equidistribution result from \cite[Theorem 7.52]{Baker-Rumely}
(see our Theorem~\ref{thm:equidistribution}) and 
conclude that $M_{\bfa,v}=M_{\bfb,v}$ for each place $v\in \Omega_K$.
Indeed,  for each $n\in\N$, we may
take $S_n$ be the set of Galois conjugates of $\l_n$. Clearly $\#S_n\to\infty$ as $n\to\infty$ (since the points $\l_n$ are distinct and their heights are bounded because the heights of $\bfa(\l_n)$ and $\bfb(\l_n)$ are bounded). Finally,
$\lim_{n\to\infty}h_{\M_{\bfa}}(S_n)=\lim_{n\to\infty}h_{\M_{\bfb}}(S_n)=0$ (by Corollary~\ref{second important remark}), and thus
Theorem~\ref{thm:equidistribution} applies in this case. 

Using that $M_{\bfa,v}=M_{\bfb,v}$ for an archimedean place $v$, the same argument as in the proof of Theorem~\ref{main result} yields that $\bfa=\bfb$, as desired.
\end{proof}

Next we discuss the connection between our Question~\ref{main conjecture} and the Dynamical Manin-Mumford Conjecture \cite[Conjecture 1.4]{IMRN}.
\begin{conjecture}[Ghioca, Tucker, Zhang]
\label{IMRN conjecture}
Let $X$ be a projective variety, let $\varphi: X
  \lra X$ be
  an endomorphism defined over $\C$ with a polarization, and let $Y$ be a subvariety of $X$ which has no component included into the singular part of $X$. Then $Y$ is preperiodic under $\varphi$
   if and only if there exists a Zariski dense subset of smooth points $x\in Y\cap \Prep_{\varphi}(X)$ such that the tangent subspace of $Y$ at $x$ is preperiodic under the induced action of $\varphi$ on the Grassmanian ${\rm Gr}_{\dim(Y)}\left(T_{X,x}\right)$. (Here we denote by $T_{X,x}$ the tangent space of $X$ at the point $x$.)
\end{conjecture}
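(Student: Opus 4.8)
Since the final statement is a conjecture rather than a theorem, what follows is a proposed line of attack rather than a complete argument; I indicate the instances that seem reachable by the methods of this paper and the point where the real difficulty lies.

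The forward ("only if") implication is the routine half. If $Y$ is preperiodic, say $\varphi^{m}(Y)=\varphi^{n}(Y)$ for some $m>n\ge 0$, then the endomorphism $\tilde\varphi$ induced on the Grassmann bundle ${\rm Gr}_{\dim Y}(T_X)$ carries the Gauss lift $\tilde Y:=\{(x,T_{Y,x})\col x\in Y\ \text{smooth}\}$ onto the Gauss lift of $\varphi^{i}(Y)$ for each $i$, so $\tilde Y$ is preperiodic under $\tilde\varphi$; in particular the tangent subspace of $Y$ is preperiodic at every smooth point of $Y$. That $Y\cap\Prep_{\varphi}(X)$ is Zariski dense in $Y$ then follows from the density of preperiodic points in a preperiodic subvariety of a polarized dynamical system (Fakhruddin), after spreading out to $\Qbar$ and invoking the Lefschetz principle as in Remark~\ref{first important remark}(b). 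So the content of the conjecture is the reverse implication.

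For the reverse implication my plan is to induct on $\dim X$ and, using the polarization $\cL$ and the associated Call--Silverman canonical height $\hhat_{\varphi,\cL}$, to reduce to the following shape: $Y$ is a curve, $X$ is smooth along $Y$, and $Y$ carries an infinite (Zariski dense) set of points that are preperiodic for $\varphi$ and at which the tangent line to $Y$ is preperiodic for $\tilde\varphi$. After spreading out, $Y,X,\varphi$ are defined over a number field and the infinitely many preperiodic points on $Y$ have zero canonical height, so the next step is to run the Baker--Rumely equidistribution machine (Theorem~\ref{thm:equidistribution}) for the height on $Y$ attached to the restriction of $\hhat_{\varphi,\cL}$. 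This forces, at each place $v$, an equality of equilibrium measures for the two "sides" of $Y$; in the model case $X=\bP^1\times\bP^1$ with $\varphi=(\bff,\bff)$ this is exactly the situation handled in the proof of Theorem~\ref{main result}, and combined with the classification of $\varphi$-invariant curves in a product of lines from \cite{Medvedev-Scanlon} one concludes that $Y$ is contained in a preperiodic curve of one of the types (1)--(3) listed above. The hypothesis that the tangent direction is preperiodic is then precisely what should rule out the Latt\`es-type configurations that make the unadorned Zhang conjecture fail (see \cite{IMRN}), promoting "$Y$ lies in a preperiodic subvariety of its own dimension" to "$Y$ is preperiodic".

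The serious obstacle is everything beyond curves inside products of projective lines: for a general polarized endomorphism of a higher-dimensional $X$ there is no known substitute for the Medvedev--Scanlon classification and no adequate higher-dimensional arithmetic-equidistribution-to-algebraicity mechanism, so the inductive reduction sketched above is at present only heuristic. Even within $X=\bP^1\times\bP^1$, the case $\varphi=(\bff,\bfg)$ with $\bff\ne\bfg$ requires the more delicate two-variable analysis alluded to in Remark~\ref{discussions around Pink-Zilber}. I expect the genuine crux — and the step where a complete proof even in these special cases would demand new ideas — to be transferring the equidistribution input living on $X$ into honest control of the lift $\tilde Y$ inside ${\rm Gr}_{\dim Y}(T_X)$, i.e. exploiting the preperiodicity of the tangent direction, which is a condition on the Grassmann bundle rather than on $X$ itself, uniformly along $Y$.
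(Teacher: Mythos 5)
You have correctly identified that the statement in question is an open conjecture, not a theorem of this paper: the paper quotes Conjecture~\ref{IMRN conjecture} from \cite{IMRN} and proves neither direction in general, so there is no proof of it here to compare yours against. What the paper actually establishes is (a) an unconditional special case — Corollary~\ref{Bogomolov 0} and Corollary~\ref{small points corollary}, for curves in $\bP^1\times\bP^1$ parametrized as $(\bfa(z),\bfb(z))$ with $\bfa,\bfb$ polynomials of the same degree and leading coefficient, under the diagonal action $(f,f)$ — and (b) the conditional statement of Proposition~\ref{implication between conjectures}, that a positive answer to Question~\ref{main conjecture} would yield the conjecture for all curves in $\bP^1\times\bP^1$ under $(f,f)$. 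Your forward-direction sketch is essentially the argument the paper itself uses at the end of the proof of Proposition~\ref{implication between conjectures}: preperiodicity of $Y$ gives density of preperiodic points via Fakhruddin \cite{Fa}, and the Gauss-lift observation handles the tangent condition (one should add a word about $d\varphi$ being immersive along $Y$ at a dense set of the chosen points, but this is harmless since the conjecture only asks for a Zariski dense subset).

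Two remarks on your reverse-direction plan. First, in the one case the paper actually settles, the tangent-space hypothesis is never used — the paper points this out explicitly in Remark~\ref{important remarks at the end}(a): for curves in $\bP^1\times\bP^1$ the extra hypothesis is only expected to matter for pairs of distinct Latt\`{e}s maps, which do not arise for the diagonal action. So your expectation that the Grassmannian condition is the crux even in the model case is slightly off; the genuine difficulty already in $\bP^1\times\bP^1$ with $\Phi=(\bff,\bff)$ is the passage from ``$Y$ lies in an invariant curve of type (1)--(3)'' to the classification itself, i.e.\ Question~\ref{main conjecture}, which the paper answers only for the polynomial families of Theorem~\ref{main result} under hypotheses (i)--(ii). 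Second, your proposed induction on $\dim X$ has no mechanism for descending: equidistribution on the base (Baker--Rumely, or Yuan--Zhang in higher dimension, as discussed in Section~\ref{future}) yields equality of equilibrium measures, but converting that into algebraic rigidity of $Y$ requires an analogue of the explicit Green's-function computation of Section~\ref{complex analysis}, which is unavailable beyond very special families. You locate the obstruction honestly, so there is no hidden error — but the proposal is a research program, not a proof, and should be read as such.
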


In \cite{IMRN}, Ghioca, Tucker, and Zhang prove that
Conjecture~\ref{IMRN conjecture} holds whenever $\Phi$ is a
polarizable algebraic group endomorphism of the abelian variety $X$,
and also when $X=\bP^1\times\bP^1$, $Y$ is a line, and
$\Phi(x,y)=(f(x),g(y))$ for any rational maps $f$ and $g$.  We claim
that a positive answer to Question~\ref{main conjecture} yields the
following special case of Conjecture~\ref{IMRN conjecture} which is
not covered by the results from \cite{IMRN}. Note that we do not need
the condition on preperiodicity of tangent spaces in the Grassmanian,
only an infinite family of preperiodic points; hence what one would obtain
here is really a special case of Zhang' original dynamical
Manin-Mumford conjecture (which did not require the extra hypothesis on
tangent spaces).  

\begin{prop}
\label{implication between conjectures}
If Question~\ref{main conjecture} holds, then for any endomorphism
$\Phi$ of $\bP^1\times\bP^1$ given by $\Phi(x,y):=(f(x),f(y))$ for some
rational map $f\in\C[x]$ of degree at least $2$, a curve $Y
\subset \bP^1 \times \bP^1$ will contain infinitely many preperiodic
points if and only if $Y$ is preperiodic under $\Phi$.  In particular,
Question~\ref{main conjecture} implies Conjecture~\ref{IMRN conjecture} for such
$Y$ and $\Phi$.
\end{prop}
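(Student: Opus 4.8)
The plan is to apply Question~\ref{main conjecture} with the parameter curve taken to be $Y$ itself (more precisely, a quasiprojective model of it) and with $\mathbf{f}$ the \emph{constant} one-parameter family $f$. Throughout we may assume $Y$ is irreducible: a reducible curve contains infinitely many preperiodic points if and only if some component does, and it is preperiodic if and only if every component is, so the general case reduces to the irreducible one; the same remark reduces the application to Conjecture~\ref{IMRN conjecture} to the irreducible case, and in that reduction the tangent-space hypothesis is never needed, a Zariski dense set of preperiodic points on each component being all we use. The forward implication ``$Y$ preperiodic $\Rightarrow$ $Y$ contains infinitely many preperiodic points'' is standard, and I would only sketch it: writing $Y':=\Phi^n(Y)=\Phi^m(Y)$ with $m>n$, the iterate $\Phi^{m-n}$ restricts to a finite surjective self-map of $Y'$ of degree $(\deg f)^{m-n}\ge 2$ (compare degrees along a projection, or along $\{x=c\}$ when $Y'$ is a fibre); by Riemann--Hurwitz the normalization of $Y'$ then has genus at most $1$, hence carries a Zariski dense set of periodic points, and these pull back along the finite morphism $\Phi^n\colon Y\to Y'$ to infinitely many preperiodic points of $Y$.

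For the converse I would start from an irreducible curve $Y$ containing infinitely many preperiodic points of $\Phi=(f,f)$ --- equivalently, infinitely many points of $Y$ both of whose coordinates are preperiodic for $f$ --- and split into two cases. If $Y$ is a fibre of a projection, say $Y=\{x=c\}$, then, since $f$ already has infinitely many preperiodic points on $\mathbb{P}^1$, having infinitely many preperiodic points on $Y$ simply means that $c$ is preperiodic for $f$; and then $\Phi^m(\{x=c\})=\{x=f^m(c)\}$ is eventually periodic in $m$, so $Y$ is preperiodic (the case $Y=\{y=c\}$ is symmetric). In the remaining case both projections $\pi_1,\pi_2\colon Y\to\mathbb{P}^1$ are nonconstant; setting $F=\mathbb{C}(Y)$ and letting $\mathbf{a}:=\pi_1,\ \mathbf{b}:=\pi_2\in\mathbb{P}^1(F)$ be the two coordinate projections, the pair $(\mathbf{a},\mathbf{b})$ is then genuinely a curve $V$ in $\mathcal{X}=\mathbb{P}^1_F\times_F\mathbb{P}^1_F$, and $Y$ is the Zariski closure of $\{(\mathbf{a}(\lambda),\mathbf{b}(\lambda))\}_{\lambda}$. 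By construction the $\lambda\in Y$ at which both $\mathbf{a}(\lambda)$ and $\mathbf{b}(\lambda)$ are preperiodic for $f_\lambda=f$ are exactly the preperiodic points of $Y$, so there are infinitely many of them, and Question~\ref{main conjecture} (which we are assuming) yields a proper preperiodic subvariety $W$ of $\mathcal{X}$ under $(f,f)$ with $V\subset W$.

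The remaining work is to descend $W$ to $\mathbb{C}$. Since $\mathbf{a},\mathbf{b}$ are nonconstant, $W$ cannot be $0$-dimensional over $F$ (a preperiodic point of $(f,f)$ has both coordinates preperiodic for $f$, hence algebraic over $\mathbb{C}$, hence in $\mathbb{C}$), so $W$ is an $F$-curve; by the classification of proper preperiodic subvarieties of $\mathbb{P}^1\times\mathbb{P}^1$ under $(f,f)$ recalled after Remark~\ref{discussions around Pink-Zilber} (see \cite{Medvedev-Scanlon}), $W$ is cut out by an equation $x=c$ or $y=c$ with $c$ periodic for $f$, or $\varphi_1(x)=\varphi_2(y)$ with each $\varphi_i$ commuting with a power of $f$. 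Because $f$ is defined over $\mathbb{C}$, its preperiodic points and the maps commuting with its iterates are algebraic over $\mathbb{C}$ and therefore lie in $\mathbb{C}$, so $W=W_0\times_{\mathbb{C}}F$ for a preperiodic curve $W_0\subset\mathbb{P}^1\times\mathbb{P}^1$ defined over $\mathbb{C}$; then $V\subset W$ says precisely that $(\mathbf{a}(\lambda),\mathbf{b}(\lambda))\in W_0$ for all $\lambda$, i.e.\ $Y\subseteq W_0$, and since $Y$ is an irreducible curve inside the preperiodic curve $W_0$ it is a component of $W_0$ and hence itself preperiodic under $\Phi$. The statement about Conjecture~\ref{IMRN conjecture} then follows at once, since an infinite (a fortiori, Zariski dense) set of preperiodic points on $Y$ forces $Y$ to be preperiodic, and a preperiodic $Y$ obviously carries such a set. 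I expect this descent step to be the only genuinely delicate point: it is exactly here that one must invoke the classification of preperiodic curves for $(f,f)$ together with the fact that this classification does not see the difference between the ground fields $\mathbb{C}$ and $F$.
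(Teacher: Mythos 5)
Your proof is correct and follows the same overall strategy as the paper --- run Question~\ref{main conjecture} backwards with the constant family $\bff=f$ over the function field $F=\C(Y)$, taking $\bfa,\bfb$ to be the coordinate projections of $Y$ --- but it differs in two respects, one of which is a genuine improvement.

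For the direction ``$Y$ preperiodic $\Rightarrow$ $Y$ contains infinitely many preperiodic points,'' the paper simply cites Fakhruddin's theorem on density of periodic points, whereas you give a hands-on argument via Riemann--Hurwitz (a degree-$\ge 2$ self-map of a curve forces genus $\le 1$, and such curves carry dense periodic points under degree-$\ge 2$ self-maps). Your degree computation for $\Phi^{m-n}|_{Y'}$ is fine once one notes that any irreducible $Y'\subset\bP^1\times\bP^1$ either has a nonconstant coordinate projection or is a fibre, which you handle. This is a more elementary route to the same fact.

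For the main direction, your proof matches the paper's up to the point where Question~\ref{main conjecture} produces a proper preperiodic $F$-subvariety $W\subset \bP^1_F\times_F\bP^1_F$ with $(\bfa,\bfb)\in W$. The paper then asserts without justification that $W$ must be $Y\otimes_\C F$; this is in fact a genuine gap, since $(\bfa,\bfb)$ is merely one $F$-rational point of $W$ and of $Y\otimes_\C F$, and a priori these two $F$-curves could be distinct. You close the gap by arguing, correctly, that $W$ cannot be $0$-dimensional (its points would have both coordinates preperiodic for $f$, hence in $\C$, contradicting nonconstancy of $\bfa,\bfb$) and that as a preperiodic $F$-curve for $(f,f)$ with $f$ defined over $\C$, $W$ must descend to a $\C$-curve $W_0$, after which $Y\subset W_0$ and irreducibility finishes. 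This descent is the delicate point, and making it explicit is a real improvement over the paper. Two small cautions on that step: the classification recalled after Remark~\ref{discussions around Pink-Zilber} (and in \cite{Medvedev-Scanlon}) describes \emph{periodic} rather than preperiodic curves, so you should first pass from $W$ to a periodic iterate $\Phi^N(W)$, apply the classification there to see it is defined over $\C$, and then note that $W$ is a component of $\Phi^{-N}(\Phi^N(W))$, which is again defined over $\C$; and the Medvedev--Scanlon classification is stated for polynomials, so if one really wants the statement for rational maps (as the wording ``rational map $f\in\C[x]$'' ambiguously suggests) one should instead invoke rigidity/finiteness of periodic curves of bounded degree, or the Lattès-aware version of the classification, to get the descent to $\C$.
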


\begin{proof}
  Let $Y\subset \bP^1\times \bP^1$ be a curve containing infinitely
  many points $(x,y)$ such that both $x$ and $y$ are preperiodic for
  $f$. Furthermore, we may assume $Y$ projects dominantly on each
  coordinate of $\bP^1\times \bP^1$ since otherwise it is immediate to
  conclude that $Y$ contains infinitely many preperiodic points for $\Phi$ if and only if $Y=\{c\}\times\bP^1$, or $Y=\bP^1\times \{c\}$, where
  $c$ is a preperiodic point for $f$.

  We let $\bff=f_{\l}:=f$ be the constant family of rational functions
  (equal to $f$) indexed by all points $\l\in Y$, and let $K$ be the function field of $Y$. Let $(\bfa,\bfb)\in
  \bP^1(K)\times \bP^1(K)$ be a generic point for $Y$. By our
  assumption, there exist infinitely many $\l\in Y$ such that both
  $\bfa(\l)$ and $\bfb(\l)$ are preperiodic for $f_{\l}=f$. Since $Y$
  projects dominantly on each coordinate of $\bP^1\times \bP^1$, we
  get that neither $\bfa$ nor $\bfb$ is preperiodic under the action
  of $f$ (otherwise, $\bfa$ or $\bfb$ would be constant). So, assuming Question~\ref{main conjecture} holds, we obtain that the curve $Y(\bC)=\{(\bfa(\l),\bfb(\l))\text{ : }\lambda \in Y\}\subset\bP^1_K\times_K \bP^1_K$ lies on a preperiodic proper subvariety $Z$ of $\bP^1\times\bP^1$ defined over a finite extension of $K$. More precisely, we get that  $Z=Y\otimes_{\C}K $ and so, $Y$ must be itself preperiodic under the action of $(f,f)$ on $\bP^1\times\bP^1$.

Conversely, suppose that $Y$ is preperiodic under $\Phi$.  Then some
iterate of $Y$ contains a dense set of periodic points, by work of
Fakhruddin \cite{Fa}, so $Y$ contains an infinite set of preperiodic
points.  
\end{proof}

\begin{remark}
\label{important remarks at the end}
(a) In the proof of Proposition~\ref{implication between conjectures}
we did not use the full strength of the hypothesis from
Conjecture~\ref{IMRN conjecture}. Instead we used the weaker
hypothesis of \cite[Conjecture 2.5]{Zhang} or \cite[Conjecture 1.2.1,
Conjecture 4.1.7]{Zhang-lec} (which was the original formulation of
the Dynamical Manin-Mumford Conjecture). This is not surprising since
for curves contained in $\bP^1\times \bP^1$ the \emph{only}
counterexamples to the original formulation of the Dynamical
Manin-Mumford Conjecture are expected to occur when $\Phi:=(f,g)$ for
two \emph{distinct} Latt\`{e}s maps. 
\\
(b) Finally, we note that a positive answer to Conjecture~\ref{IMRN
  conjecture} does not yield a positive answer to Question~\ref{main
  conjecture}. Instead, Question~\ref{main conjecture} goes in a different direction which is likely to shed more light on the Dynamical Manin-Mumford Conjecture, especially in the case $Y$ is a curve  in Conjecture~\ref{IMRN conjecture}. 
\end{remark}

\section{Further extensions}
\label{future}

Many of the techniques here rely crucially on equidistribution results of
Baker-Rumely \cite{Baker-Rumely}.  Yuan \cite{Yuan} has generalized these results
to higher dimensions, using slightly different terminology, which was
developed by Chambert-Loir \cite{CL}.  More recently, Yuan and Zhang \cite{YZ} have
combined these equidistribution results with a $p$-adic version of the
Calabi theorem to obtain a result that says that any height function
corresponding to a semipositive metrized line bundle of fixed degree is uniquely
determined as a function by any dense set of points at which the
function is sufficiently small (in many applications, sufficiently
small means tending to 0). 

The relevant height functions for our purposes will be of the form
$h_a(f) := \hhat_f(a)$ where $f$ is a polarizable morphism $f: X \lra X$ on
a projective variety, $a$ is a point on $X$, and $\hhat_f$ is the canonical
height associated to $f$.  Thus, one uses canonical heights on some
variety $X$ to get heights on some moduli of morphisms of $X$.  If one
can show that each height function $h_a$ comes from a semipositive
metrized line bundle, then the results of Yuan-Zhang apply.  When this
is the case, one obtains that if $a$ and $b$ are points on $X$ such
that $h_a(f) = h_b(f) = 0$ for some dense set of morphisms $f$ in the
moduli space, then $h_a$ and 
$h_b$ must be proportional; this implies in particular that for \emph{each}
morphism $f$ in our moduli space, $a$ is preperiodic for $f$ if and only if
$b$ is preperiodic for $f$.

The heights $h_a$ described above are obtained via an iterative limit
process.  The main obstacle to applying the results of Yuan-Zhang
seems to be showing that these limits converge uniformly (over the
moduli of morphisms $f$); in most cases this is very difficult to prove.

Even in the case of dimension 1, the Chambert-Loir/Yuan-Zhang approach
to equidistribution seem to give greater flexibility than the methods
used here. We will use this approach to treat more general families of
rational functions in an upcoming paper; it is not clear how one might
treat these families using the methods of this paper.  However, a significant 
advantage of the method used in the present paper (relative to the proposed
new method) is that we are able here to give explicit necessary and
sufficient conditions for the starting points $a$ and $b$ guaranteeing
the existence of infinitely many functions $f$ in the moduli space of
endomorphisms of $\mathbb{A}^1$ such that both $a$ and $b$ are
preperiodic points for $f$. Most of the time, depending on the moduli
space of morphisms $\mathcal{F}$, the method outlined above may \emph{only}
yield that if there is a dense set of $f\in\mathcal{F}$ such that both
$a$ and $b$ are preperiodic for $f$, then for \emph{all}
$f\in\mathcal{F}$, $a$ is preperiodic for $f$ if and only if $b$ is
preperiodic for $f$. Finding explicit conditions for $a$ and $b$ such
as in our Theorem~\ref{main result} turns out to be very difficult for
general moduli space $\mathcal{F}$. Also, as previously mentioned, it is quite difficult to check that a given family of morphisms verifies the technical hypothesis needed in order to apply the equidistribution results of Yuan-Zhang; usually one needs a case-by-case argument for each family. However, in the present paper,  using the equidistribution result  of Baker-Rumely we give a unified treatment for almost all families of polynomials in normal form.

On the other hand, a great advantage of the new method is that it
would treat some higher-dimensional examples, though some technical
issues have yet to be resolved. The equidistribution results of Baker
and Rumely \cite{Baker-Rumely} are strictly confined to the case when
the moduli space is $\mathbb{P}^1$. If the moduli space is a curve of positive genus, then one has to apply the equidistribution results of Chambert-Loir \cite{CL}, while if the moduli space has dimension larger than $1$, then one has to apply the results of Yuan and Zhang \cite{Yuan, YZ}.

We note that in general one also expects this outlined new approach to yield
``Bogomolov type'' results for points of small height (not just
preperiodic points), provided everything is defined over a number
field.  The approach of Masser and Zannier \cite{M-Z-1, M-Z-2}
typically yields ``Manin-Mumford-type'' results (for preperiodic
points, not points of small height).





\end{document}